\pdfoutput=1
\documentclass[12pt]{article}

\usepackage{amsmath,amssymb,amsthm}
\usepackage{tikz-cd}
\usepackage{url}
\usepackage{verbatim}
\usepackage[export]{adjustbox}
\usepackage{array,booktabs}

\newdimen\R
\R=0.8cm

\usepackage{lipsum}

\newcommand{\epislon}{\epsilon}

\DeclareMathOperator{\Hom}{Hom}
\DeclareMathOperator{\Ext}{Ext}
\DeclareMathOperator{\coker}{coker}
\newtheorem{definition}{Definition}
\newtheorem*{definition*}{Definition}
\newtheorem{theorem}{Theorem}
\newtheorem*{theorem*}{Theorem}
\newtheorem{corollary}{Corollary}
\newtheorem*{corollary*}{Corollary}
\newtheorem{lemma}{Lemma}
\newtheorem*{lemma*}{Lemma}
\newtheorem{proposition}{Proposition}
\newtheorem*{proposition*}{Proposition}

\newtheorem*{conjecture*}{Conjecture}
\theoremstyle{remark}
\newtheorem{example}{Example}
\newtheorem*{example*}{Example}
\newtheorem{remark}{Remark}
\newtheorem*{remark*}{Remark}

\newcommand{\canakci}{{\c{C}}anak{\c{c}}i }

\let\amsamp=&

\allowdisplaybreaks

\usepackage[a4paper,footskip=0.5in, margin = 0.7in]{geometry}

\title{Snake Graphs and Caldero-Chapoton Functions from Triangulated Orbifolds}

\usepackage{authblk}
\author[1]{Esther Banaian}
\author[2]{Yadira Valdivieso}
\affil[1]{\small{Department of Mathematics, Ny Munkegade 118, building 1530, 413, 8000 Aarhus C, Denmark}}
\affil[2]{\small{Deparment of  Actuarial Science, Physics, and Mathematics, Universidad de las Américas Puebla.
Sta. Catarina Mártir. San Andrés Cholula, Puebla. C.P. 72810. México}}

\begin{document}

\maketitle

\abstract{Generalized cluster algebras from orbifolds were defined by Chekhov and Shapiro to give a combinatorial description of their Teichmüller spaces. One can also assign a gentle algebra to a triangulated orbifold, as in the work of Labardini-Fragoso and Mou. In this work, we show that the Caldero-Chapoton map and the snake graph expansion map agree for arcs in triangulated orbifolds and arc modules, and similarly for closed curves and certain band modules. As a consequence, we have a bijection between some indecomposable modules over a gentle algebra and cluster variables in a generalized cluster algebra where both algebras arise from the same triangulated orbifold.}

\section{Introduction}

 Cluster algebras, originally defined by Fomin and Zelevinsky, are commutative rings with generators defined by a recursive process called mutation. 
Cluster algebras have been shown to have connections to a wide range of mathematical topics; in particular, they have a rich connection to the representation theory of finite dimensional algebras. For example, Caldero and Chapoton showed that non-initial cluster variables are in bijection with indecomposable modules of path algebras from quivers of ADE type \cite{CCMap}. They do this by describing a map (the \emph{Caldero-Chapoton map}) which sends a module to a Laurent polynomial. The definition of this map was extended in \cite{cerulli2015caldero} for modules over more general algebras. 

One well-studied class of cluster algebras are those arising from surfaces, where the mutation process is manifested as flipping arcs in a triangulation. The surface model was described by Fomin, Shapiro, and D. Thurston in \cite{FST-triangulatedSurfaces}. Given a triangulation $T$ of a surface with marked points $(S,M)$, Labardini-Fragoso associates a quiver $Q_T$ with potential $W_T$ whose mutation agrees with the flips of the arcs on the surface \cite{labardini2009quivers}. When all marked points of the surface are on the boundary $\partial S$ (that is, without punctures), the corresponding Jacobian algebra is gentle; these gentle algebras from triangulated surfaces were studied in  \cite{assem2010gentle}. It is natural to ask how the cluster algebra and gentle algebra from the same triangulated surface compare. This question was pursued in  \cite{geiss2020generic} and \cite{geiss2022schemes}. A related work is found in \cite{brustle2011cluster} where the authors compare the combinatorics of arcs on surfaces with features of the \emph{cluster category} associated to the cluster algebra from this surface. See \cite{buan2006tilting} and \cite{amiot2009cluster} for definitions of the cluster category.

Here, we study parallel stories where surfaces are replaced with \emph{orbifolds}. Chekhov and Shapiro provide a generalization of cluster algebras where the binomial exchange polynomial is replaced with a polynomial with arbitrarily many terms \cite{chekhov2014teichmuller}. A class of these algebras can be viewed as arising from triangulated orbifolds, in the same way that some cluster algebras arise from triangulated surfaces. 

 Labardini-Fragoso and Mou define a family of gentle algebras associated to triangulated unpunctured orbifolds with all orbifold points of order three \cite{labardini2023gentleI}. 
The algebras associated to a polygon with one orbifold point were previously studied in \cite{labardini2019family}. In this latter article, it was shown that functions given by applying the Caldero-Chapoton map to the modules of this algebra span the generalized cluster algebra from this orbifold. Our goal is to further investigate the connections between the generalized cluster algebra from a triangulated orbifold and the associated gentle algebra, motivated by the existing work in the surface case.

Our approach for this study is to compare the Caldero-Chapoton map with the snake graph expansion map. Snake graphs were introduced in \cite{Musiker-Schiffler} to give a combinatorial proof of positivity for cluster algebras from surfaces. They were extended by the first author and Kelley in \cite{banaian2020snake} for generalized cluster algebras from orbifolds. Our main result (Theorem \ref{thm:CCMapAgreesSnakeGraph}) is that the outputs of the Caldero-Chapoton map and the snake graph expansion map agree when applied to arc (string) modules and arcs respectively, and for certain band modules and closed curves.


The paper is organized as follows. In Section \ref{sec:GenCAAndOrbifold}, we review the basics of generalized cluster algebras and how we can build some of these from triangulated orbifolds. Section \ref{sec:SnakeGraph} builds on this by describing the snake graph expansion map, which explicitly gives the element of a generalized cluster algebra associated to any arc or closed curve on the orbifold. In Section \ref{sec:Strings}, we review definitions associated to gentle algebras and then discuss those which are associated to triangulated orbifolds. 

 Motivated by the comparison of string combinatorics and surface combinatorics in \cite{brustle2011cluster}, we show in Section  \ref{sec:ModuleCategory} that some similar results still hold for the  module category associated to an orbifold with marked points. This includes the fact that indecomposable modules correspond to arcs and closed curves (Theorem \ref{thm:StringsBijectCurves} and Corollary \ref{cor:ArcsGiveIndecomp}) and that Auslander-Reiten translation is encoded in rotating an arc (Theorem \ref{thm:ARTranslation}).

Section \ref{sec:CCMap} is devoted to proving Theorem \ref{thm:CCMapAgreesSnakeGraph}. To this end, in Theorem \ref{thm:BandMatchingSubmoduleBijection} we provide a bijection between the lattice of good matchings of a band graph and the lattice of canonical submodules of certain band modules. A similar such bijection between lattices of perfect matchings of snake graphs and canonical submodules of string modules is given in \cite{ccanakcci2021lattice}.

\section*{Acknowledgement}

The authors are grateful to Anna Felikson and Daniel Labardini-Fragoso for valuable conversations that improved this work.

The authors were supported by the European Union’s Horizon 2020
research and innovation programme under the Marie Sklodowska-Curie grant agreement No H2020-MSCA-IF-2018-838316. EB was also supported by the Independent Research Fund Denmark (1026-00050B).

YV would like to thank the Isaac Newton Institute for Mathematical Sciences, Cambridge, for support and hospitality during the programme “Cluster Algebras and
Representation Theory” where part of the work on this paper was undertaken. YV would also would like to thank the Homological Algebra group at Aarhus University and EB for its hospitality during her visit. EB would like to thank YV for her hospitality during her visit to University of Leicester. 

\section{Generalized Cluster Algebras and Orbifolds}\label{sec:GenCAAndOrbifold}

Here, we give a summary of the description of a generalized cluster algebra. We direct an interested reader to \cite{chekhov2014teichmuller} for the original definition. See also \cite{Fomin-Zelevinsky-I, williams2014cluster} for the definition of an ordinary cluster algebra.

A generalized cluster algebra of rank $n$ will be a subring of the ring of rational functions in $n$ variables. We let $\mathbb{P}$ denote a \emph{tropical semifield} and set $\mathcal{F} = \mathbb{Q}\mathbb{P}[x_1,\ldots,x_n]$. We define a \emph{generalized seed of rank $n$ with principal coefficients} (or just a \emph{seed}) to be a tuple $(\mathbf{x},\mathbf{y},B,\mathbf{d},Z)$ where 
\begin{itemize}
    \item $\textbf{x} = (x_1, \dots, x_n)$ is an $n$-tuple of elements of $\mathcal{F}$ which forms a free generating set for $\mathcal{F}$, 
    \item $\textbf{y} = (y_1, \dots, y_n)$ is an $n$-tuple with elements in $\mathbb{P}$, 
    \item $B = (b_{ij})$ is a skew-symmetrizable $n\times n$ integer matrix,
    \item $\mathbf{d} = (d_1,\ldots,d_n)$ is an $n$-tuple of positive integers, and 
    \item $\mathbf{Z} =(Z_1,\ldots,Z_n)$ is an $n$-tuple of polynomials with coefficients in $\mathbb{P}$ such that the degree of $Z_i$ is $d_i$.
\end{itemize}

We refer to $\textbf{x}$ as a \emph{cluster} with elements $x_1,\ldots,x_n$ being \emph{cluster variables}, $\textbf{y}$ as a \emph{coefficient tuple} with elements $y_1, \dots, y_n$ being \emph{coefficient variables}, $B$ as the \emph{exchange matrix}, $d_i \in \mathbf{d}$ as a \emph{mutation degree}, and $Z_i \in \mathbf{Z}$ as a \emph{exchange polynomials}. It is common to restrict to the case where the $Z_i$ are reciprocal and the leading coefficient is 1; this will be true in all the cluster algebras we consider.
If all mutation degrees are 1, then each $Z_i = 1$ and this will be a seed for an ordinary cluster algebra.

Given one seed, we can \emph{mutate} in direction $k$ for any $k \in [n]$ to produce a new seed. Informally, when we mutate a cluster $\mathbf{x} = (x_1,\ldots,x_n)$ in direction $k$ we replace $x_k$ with the product of a monomial and $Z_k$ evaluated at $\hat{y}_k$, where the monomial and the definition of $\hat{y}_k$ depend on the matrix $B$. The coefficient tuple and matrix will mutate as well. The assumption that each $Z_i$ is reciprocal with leading coefficient 1 will mean that the $Z_i$ are invariant under mutation.  See \cite{chekhov2014teichmuller} or \cite{Nakanishi} for precise mutation formulas.

Beginning with an initial seed, our generalized cluster algebra will have as generators all cluster variables in clusters mutation equivalent to the initial cluster.

\begin{definition}\label{def:GenCA}
Fix a semifield $\mathbb{P}$. Let $(\mathbf{x},\mathbf{y},B,\mathbf{d},\mathbf{Z})$ be a generalized seed of rank $n$. Then $\mathcal{A}(B,\mathbf{Z})$ is the ring generated by all cluster variables which can be reached by mutations starting from $\mathbf{x}$.
\end{definition}

We remark that, from the complete definition of mutation in a generalized cluster algebra, the algebra produced from an initial seed $(\mathbf{x},\mathbf{y},B,\mathbf{d},\mathbf{Z})$ is determined  by the matrix $B$ and tuple of polynomials $\mathbf{Z}$. Thus, the notation $\mathcal{A}(B,\mathbf{Z})$ is unambiguous.

Generalized cluster algebras retain the Laurent Phenomenon, a celebrated property of ordinary cluster algebras and a subclass have been shown to exhibit positivity \cite{chekhov2014teichmuller}. This subclass includes generalized cluster algebras from orbifolds, as will be defined in Section \ref{sec:GenCAFromOrb}.

\subsection{Generalized Cluster Quivers}
In the case of an ordinary cluster algebra, if the exchange matrix is skew-symmetric we can instead define the cluster algebra in terms of a \emph{quiver}, $Q = (Q_0,Q_1)$ which is a directed graph with vertices $Q_0$ labeled by $1,\ldots,n$ and directed arrows $Q_1$. Let $s,t: Q_1 \to Q_0$ denote the start and terminal point of an arrow, so that $\alpha \in Q_1$ looks like $s(\alpha) \xrightarrow{\alpha} t(\alpha)$. A quiver is called a \emph{cluster quiver} if it has no loops (no $\alpha \in Q_1$ with $s(\alpha) = t(\alpha)$) and no directed 2-cycles (no distinct pair $\alpha,\beta \in Q_1$ with $s(\alpha) = t(\beta)$ and $s(\beta) = t(\alpha)$). 

For notational convenience in the later parts of the paper, we introduce a \emph{generalized cluster quiver}, which will encode a skew-symmetric exchange matrix for a generalized cluster algebra with all mutation degrees 1 or 2. This restriction is simply because these are the generalized cluster algebras we work with; the definition could be generalized to allow arbitrary mutation degrees.

\begin{definition}[Generalized Cluster Quiver]\label{def:GenClQuiver}
 A \emph{generalized cluster quiver} $Q = (Q_0,Q_1)$ is a cluster quiver with $Q_0$ partitioned into two sets $Q_0 = S \sqcup P$.  We denote an arbitrary vertex in $S$ with $i$ and call this a ``standard vertex''. We denote an arbitrary vertex in $P$ with $\bar{i}$ and call this  a ``pending vertex''. 
\end{definition}

The language of a standard vertex and a pending vertex will be justified in Section \ref{sec:GenCAFromOrb}. The difference between cluster quivers and generalized cluster quivers is made clear with the mutation rules.

\begin{definition}[Mutation]\label{def:GenCAMutation}
Let $Q$ be a generalized cluster quiver and let $k \in Q_0$. We define a new generalized cluster quiver $\mu_k(Q)$ through the following process.
\begin{enumerate}
    \item If $k \in S$ ($k \in P$), then for each path $i \to k \to j$ in $Q$, add one (two) arrow(s) $i \to j$. 
    \item Reverse the direction of all arrows incident to $k$.
    \item Delete any directed 2-cycles formed in steps (1) and (2).
\end{enumerate}
\end{definition}

Note that mutation of a generalized cluster quiver does not affect the partitioning of vertices $Q_0 = S \sqcup P$. If $P = \emptyset$, this is ordinary cluster quiver mutation. See Figure \ref{fig:GenClQuivMutation} for a comparison in mutating a standard and a pending vertex in a three cycle.

\begin{figure}
\begin{center}
\begin{tabular}{ccc}
\begin{tikzcd}[arrow style=tikz,>=stealth,row sep=4em]
2 \arrow[dr]
&& 3 \arrow[ll]  \\
& 1 \arrow[ur]
\end{tikzcd} & $\xrightarrow{\mu_1}$ &
\begin{tikzcd}[arrow style=tikz,>=stealth,row sep=4em]
2 && 3 \arrow[dl]  \\
& 1 \arrow[ul]
\end{tikzcd}\\
\begin{tikzcd}[arrow style=tikz,>=stealth,row sep=4em]
2 \arrow[dr]
&& 3 \arrow[ll]  \\
& \bar{1} \arrow[ur]
\end{tikzcd} & $\xrightarrow{\mu_1}$ &
\begin{tikzcd}[arrow style=tikz,>=stealth,row sep=4em]
2\arrow[rr] && 3 \arrow[dl]  \\
& 1 \arrow[ul]
\end{tikzcd}\\
\end{tabular}
\end{center}
\caption{Examples of mutating at a standard vertex and a pending vertex.}\label{fig:GenClQuivMutation}
\end{figure}
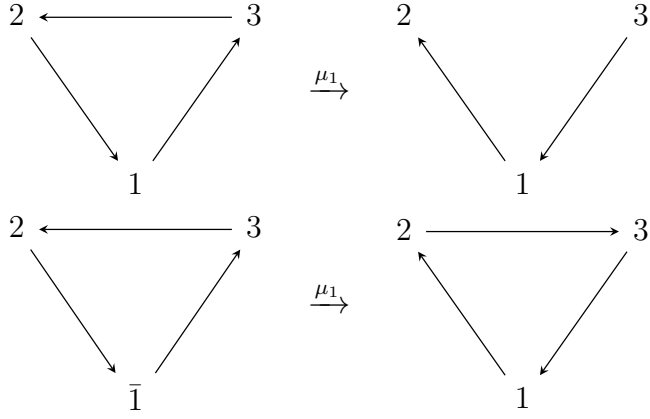

Given an $n\times n$ skew-symmetric matrix $B$ and a tuple $\mathbf{d} = (d_1,\ldots,d_n)$ with $d_i \in \{1,2\}$, we define a generalized cluster quiver $Q_{B,\mathbf{d}}$. We set $(Q_{B,\mathbf{d}})_0 = [n]$, and for $i \in [n]$, $i \in S$ if and only if $d_i = 1$. For any entry $b_{i,j} > 0$, we include $b_{i,j}$ arrows $i \to j$ in $Q_{B,\mathbf{d}}$. A quick check shows that mutation of matrices in such a generalized cluster algebra is compatible with the mutation of generalized cluster quivers. The definitions of matrix mutation for a generalized cluster algebra are given in \cite{chekhov2014teichmuller}.

\begin{lemma}\label{lem:GenClusterQuiverEqualsMatrix}
Let $B$ be an $n\times n$ skew-symmetric matrix and let $\mathbf{d} = (d_1,\ldots,d_n)$ be a tuple such that $d_i \in \{1,2\}$ for each $i$. 
Then, for all $k \in [n]$, $\mu_k(Q_{B,\mathbf{d}}) = Q_{\mu_k(B),\mathbf{d}}$.
\end{lemma}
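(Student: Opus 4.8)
The plan is to verify $\mu_k(Q_{B,\mathbf d}) = Q_{\mu_k(B),\mathbf d}$ by comparing, pair of vertices by pair of vertices, the effect of generalized cluster quiver mutation (Definition \ref{def:GenCAMutation}) against the matrix mutation rule for generalized cluster algebras recalled from \cite{chekhov2014teichmuller}. When every mutation degree lies in $\{1,2\}$, that rule reads $\mu_k(B)_{ij} = -b_{ij}$ if $i=k$ or $j=k$, and $\mu_k(B)_{ij} = b_{ij} + \tfrac{d_k}{2}\bigl(|b_{ik}|b_{kj} + b_{ik}|b_{kj}|\bigr)$ otherwise. Since $B$ is skew-symmetric, a short computation shows this operation preserves skew-symmetry and the vanishing of the diagonal, so $Q_{\mu_k(B),\mathbf d}$ is again a generalized cluster quiver with the same partition $Q_0 = S\sqcup P$; on the other side, $\mu_k$ fixes $\mathbf d$ and hence the partition by the remark following Definition \ref{def:GenCAMutation}. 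Thus it suffices to match arrow multiplicities.

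First I would handle the arrows incident to $k$: step (1) of quiver mutation neither creates nor destroys them, since any path $i\to k\to j$ has both endpoints distinct from $k$ (and from each other, as the quiver has no loops or $2$-cycles), while step (2) reverses them, which is exactly the sign change $b_{ik}\mapsto -b_{ik}$, $b_{kj}\mapsto -b_{kj}$ in the matrix. For a pair $i,j$ with $i,j\neq k$, I would track the signed arrow count, which is $b_{ij}$. Step (1) adds $d_k$ arrows $i\to j$ for each of the $b_{ik}b_{kj}$ paths $i\to k\to j$ present when $b_{ik},b_{kj}>0$, and symmetrically $d_k\,b_{jk}b_{ki}$ arrows $j\to i$ when $b_{ik},b_{kj}<0$ (using that $d_k=1$ for $k\in S$ and $d_k=2$ for $k\in P$, matching the ``one (two)'' in Definition \ref{def:GenCAMutation}); step (2) affects nothing here; step (3) merges the two directions. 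The net signed count is $b_{ij} + d_k(p-q)$ where $p$ and $q$ count the paths $i\to k\to j$ and $j\to k\to i$ respectively, and a check over the four sign patterns of $(b_{ik},b_{kj})$ gives $p-q = \tfrac12\bigl(|b_{ik}|b_{kj} + b_{ik}|b_{kj}|\bigr)$, so this equals $\mu_k(B)_{ij}$.

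The argument is mostly bookkeeping; the one step that warrants care is this final case analysis, where one must confirm that the explicit deletion of $2$-cycles in step (3) yields the same entry as the closed algebraic formula for every sign configuration of $b_{ik}$ and $b_{kj}$, and in particular that the ``extra arrow'' created at a pending vertex is precisely the factor $d_k=2$ occurring in the matrix rule. Combined with the fact that both sides leave $\mathbf d$ and the partition $S\sqcup P$ unchanged, this establishes the lemma.
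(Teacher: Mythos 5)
The paper does not actually prove this lemma; it is dispatched in the sentence ``A quick check shows that mutation of matrices in such a generalized cluster algebra is compatible with the mutation of generalized cluster quivers,'' with the reader pointed to \cite{chekhov2014teichmuller} for the matrix rule. Your proposal is exactly that check carried out, and it is correct. The one step worth a closer look is the matrix mutation rule you recall, since it is not literally the form in which Chekhov--Shapiro state their rule: they mutate a skew-symmetrizable matrix $\beta$ by the ordinary Fomin--Zelevinsky formula, and the $d_k$-twisted rule you use for the skew-symmetric $B$ is obtained from theirs by the row rescaling $\beta_{ij} = d_i\, B_{ij}$. That substitution does produce precisely $\mu_k(B)_{ij} = b_{ij} + \tfrac{d_k}{2}\bigl(|b_{ik}|b_{kj} + b_{ik}|b_{kj}|\bigr)$, and one can also see independently that this is the unique formula compatible with the quiver rule (the 3-cycle with one pending vertex already forces the factor $d_k$). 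Given that, the rest of your argument is sound: the rule preserves skew-symmetry and the zero diagonal; $\mathbf d$ and hence the partition $S\sqcup P$ are fixed on both sides; arrows incident to $k$ are only reversed, matching the sign flip in rows and columns $k$; and the four-way sign analysis giving $p - q = \tfrac12\bigl(|b_{ik}|b_{kj}+b_{ik}|b_{kj}|\bigr)$ checks out, with $d_k\in\{1,2\}$ matching the ``one (two) arrow(s)'' in Definition \ref{def:GenCAMutation} and step (3) accounting for taking the signed net.
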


\subsection{Orbifolds}

An orbifold is a generalization of a manifold where the local structure is given by quotients of open subsets of $\mathbb{R}^n$ under finite group actions. For our considerations, an orbifold is a marked surface with an additional set of special points called \emph{orbifold points} which denote fixed points of the group action.

\begin{definition}\label{def:Orbifold}
An \emph{orbifold} $\mathcal{O}$ is a triple $(S,M,O)$,  where $S$ is a bordered surface, $M$ is a finite set of marked points, and $O$ is a finite set of orbifold points, such that
\begin{enumerate}
    \item no point is both a marked point and an orbifold point (i.e., $M \cap O) = \emptyset$),
    \item all orbifold points are interior points of $S$, and
    \item each boundary component of $S$ contains at least one marked point.
\end{enumerate} 
Moreover, each orbifold point comes with an associated order $p \in \mathbb{Z}_{\geq 2}$. 
\end{definition}

Here, we consider that all marked points lie on the boundary; that is, there are no \emph{punctures}.

 \begin{definition}\label{def:Arc}
An \emph{arc} $\gamma$ on an orbifold $\mathcal{O} = (S,M,O)$  is a non-self-intersecting curve in $S$ with endpoints in $M$ that is otherwise disjoint from $M$, $O$, and $\partial \mathcal{O}$. Curves that are contractible onto $\partial \mathcal{O}$ are not considered arcs. Arcs are considered up to isotopy class. An arc which cuts out an unpunctured monogon with exactly one point in $O$ is called a \emph{pending arc}, while all other arcs are called \emph{standard arcs}.
\end{definition}

In some sources, pending arcs are instead drawn as a line between a marked point and an orbifold point;  such curves are not allowed in our definition of an arc.

\begin{definition}\label{def:Triangulation}
Two arcs are considered \emph{compatible} if their isotopy classes contain non-intersecting representatives. A \emph{triangulation} is a maximal collection of pairwise compatible arcs.
\end{definition}

A triangulation of an orbifold $\mathcal{O}$ without punctures contains three types of triangles, shown in Figure \ref{fig:OrbifoldTriangles}. Note that if an orbifold $\mathcal{O}$ has $\ell$ orbifold points, each triangulation of $\mathcal{O}$ will have exactly $\ell$ pending arcs.

\begin{figure}
    \centering
\begin{tabular}{c|c|c|c}
    $T$ & \begin{tikzpicture}[scale=1.25]
     \node[above] at (0.5,0.5){2};
     \draw(1,1) to (0,0) to (2,0) to (1,1);
     \node[below] at (1,0) {1};
     \node[above] at (1.5,0.5){3};
    \end{tikzpicture} & \begin{tikzpicture}[scale = 1.25]
\draw[out=30,in=-30,looseness=1.5] (0,0.3) to (0,2.5);
\draw[out=150,in=-150,looseness=1.5] (0,0.3) to (0,2.5);
\draw[in=0,out=45,looseness =1.25] (0,0.3) to (0,2);
\draw[in=180,out=135,looseness=1.25] (0,0.3) to (0,2);
\draw[thick] (0,1.7) node {$\mathbf{\times}$};
\node[right] at (0.8,1.5){$\tau_3$};
\node[left] at (-0.8,1.5){$\tau_2$};
\node[above] at (0,2){$\tau_1$};
\end{tikzpicture} &
\begin{tikzpicture}[scale=2]
\draw[out=75,in=-180] (0,0) to (0.3,0.85);
\draw[out=0,in=0, looseness = 1] (0,0) to node[midway,left,yshift=-4pt]{$2$} (0.3,0.85);
\draw[out=105,in=0] (0,0) to (-0.3,0.85);
\draw[out=180,in=180, looseness = 1] (0,0) to node[midway,right,yshift=-4pt]{$1$} (-0.3,0.85);
\draw[looseness=1.75,out=0,in=0] (0,0) to (0,1.3);
\draw[looseness=1.75,out=180,in=180] (0,1.3) to (0,0);
\draw[thick] (-0.3,0.65) node {$\mathbf{\times}$};
\draw[thick] (0.3,0.65) node {$\mathbf{\times}$};
\node [yshift=8pt] at (0,1) {$3$};
\end{tikzpicture}
\\ \hline
   $Q^T_{gen}$  & 
\begin{tikzcd}[arrow style=tikz,>=stealth,row sep=4em]
2 \arrow[dr,"\alpha"]
&& 3 \arrow[ll,"\beta"]  \\
& 1 \arrow[ur,"\gamma"]
\end{tikzcd} &  
\begin{tikzcd}[arrow style=tikz,>=stealth,row sep=4em]
2 \arrow[dr,"\alpha"]
&& 3 \arrow[ll,"\beta"]  \\
& \bar{1} \arrow[ur,"\gamma"]
\end{tikzcd}& 
\begin{tikzcd}[arrow style=tikz,>=stealth,row sep=4em]
\bar{2} \arrow[dr,"\alpha"]
&& 3 \arrow[ll,"\beta"]  \\
& \bar{1} \arrow[ur,"\gamma"]
\end{tikzcd}
\\
\end{tabular}
    \caption{Three types of triangles in a triangulation of an unpunctured orbifold and corresponding generalized cluster quiver pieces.}\label{fig:OrbifoldTriangles}
\end{figure}
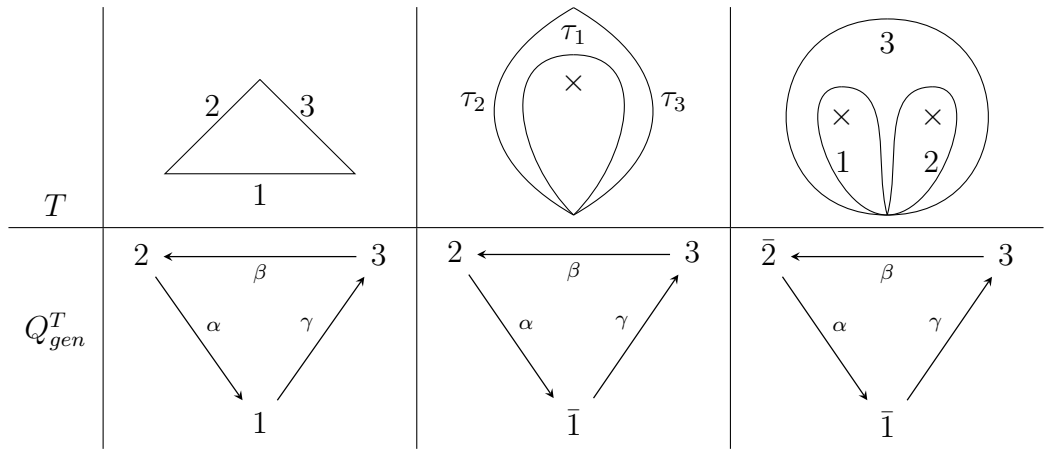

The order of an orbifold point will control in which ways an arc (possibly with self-intersection) can wind around the orbifold point. That is, an arc which winds $k$ times around an orbifold point of order $p$ is isotopic to an arc which winds $k-p$ times. In this paper, every orbifold point will have order three. Thus, by repeatedly using this equivalence, we will only see the following types of winding around around orbifold points.

\begin{center}
\begin{tikzpicture}[scale = 1.6]

\draw[out=30,in=-30, thick] (0,0) to (0,1.5);
\draw[out=150,in=-150, thick] (0,0) to (0,1.5);


\draw[in=0,out=45,looseness =0.9, thick] (0,0) to (0,1.3);
\draw[in=180,out=135,looseness=0.9, thick] (0,0) to (0,1.3);

\draw[thick,orange, thick,out=80,in=-10,looseness = 1] (0,0) to (0,1.25);
\draw[thick,orange, thick,out=180,in=180,looseness = 1.5, ->] (0,1.25) to (0,0.78);
\draw[thick,orange, thick,out=0,in=270,looseness = 1,=] (0,0.78) to (0.2,1.25);

\draw[fill=black] (0,0) circle [radius=2pt];
\draw[fill=black] (0,1.5) circle [radius=2pt];
\draw[thick] (0,1) node {$\mathbf{\times}$};

\node[] at (1,0.7) {$\cong$};

\draw[thick,out=30,in=-30] (2,0) to (2,1.5);
\draw[thick,out=150,in=-150] (2,0) to (2,1.5);


\draw[thick,in=0,out=45,looseness =0.9] (2,0) to (2,1.3);
\draw[thick,in=180,out=135,looseness=0.9] (2,0) to (2,1.3);

\draw[thick,orange, thick,out=100,in=190,looseness = 1] (2,0) to (2,1.25);
\draw[thick,orange, thick,out=0,in=0,looseness = 1.5, ->] (2,1.25) to (2,0.78);
\draw[thick,orange, thick,out=180,in=270,looseness = 1] (2,0.78) to (1.8,1.25);

\draw[fill=black] (2,0) circle [radius=2pt];
\draw[fill=black] (2,1.5) circle [radius=2pt];
\draw[thick] (2,1) node {$\mathbf{\times}$};

\end{tikzpicture}

 \begin{tikzpicture}[scale = 1.5]

\tikzset{->-/.style={decoration={
  markings,
  mark=at position #1 with {\arrow{>}}},postaction={decorate}}}

\draw[out=30,in=-30, thick] (-1,0) to (-1,1.5);
\draw[out=150,in=-150, thick] (-1,0) to (-1,1.5);

\draw[in=0,out=45,looseness =0.9, thick] (-1,0) to (-1,1.3);
\draw[in=180,out=135,looseness=0.9, thick] (-1,0) to (-1,1.3);

\draw[orange, thick,out=-90,in=-90,looseness = 4.5,->=0.25] (-1.175,1.3) to (-0.825,1.3);

\draw[fill=black] (-1,0) circle [radius=2pt];
\draw[fill=black] (-1,1.5) circle [radius=2pt];
\draw[thick] (-1,1) node {$\mathbf{\times}$};


\draw[out=30,in=-30, thick] (0.5,0) to (0.5,1.5);
\draw[out=150,in=-150, thick] (0.5,0) to (0.5,1.5);

\draw[in=0,out=45,looseness =0.9, thick] (0.5,0) to (0.5,1.3);
\draw[in=180,out=135,looseness=0.9, thick] (0.5,0) to (0.5,1.3);

\draw[orange, thick,out=100,in=180,looseness = 1] (0.35,0.5) to (0.5,1.15);
\draw[orange, thick,out=0,in=-30,looseness = 2] (0.5,1.15) to (0.5,0.8);
\draw[orange, thick,in=150,out=275,looseness = 1,->] (0.35,1.35) to (0.5,0.8) ;

\draw[fill=black] (0.5,0) circle [radius=2pt];
\draw[fill=black] (0.5,1.5) circle [radius=2pt];
\draw[thick] (0.5,1) node {$\mathbf{\times}$};


\draw[out=30,in=-30, thick] (2,0) to (2,1.5);
\draw[out=150,in=-150, thick] (2,0) to (2,1.5);

\draw[in=0,out=45,looseness =0.9, thick] (2,0) to (2,1.3);
\draw[in=180,out=135,looseness=0.9, thick] (2,0) to (2,1.3);

\draw[orange, thick,out=-90,in=-90,looseness = 4.5,->] (3.675-1.5,1.3) to (3.325-1.5,1.3);

\draw[fill=black] (2,0) circle [radius=2pt];
\draw[fill=black] (2,1.5) circle [radius=2pt];
\draw[thick] (2,1) node {$\mathbf{\times}$};

\node[scale=1.5] at (2.75-1.5,0.75) {$\approx$};

\end{tikzpicture}
\end{center}

Given a triangulation $T = \{\tau_1,\ldots,\tau_n\}$, we can \emph{flip} an arc $\tau_k$ by removing it and replacing it with a unique arc such that $\mu_k(T):= (T \backslash \{\tau_k\}) \cup \{\tau_k'\}$ is a new triangulation of $\mathcal{O}$. Felikson, Shapiro, and Tumarkin show that flips act transitively on the set of triangulations of $\mathcal{O}$ \cite{FST-orbTriang}. 
This implies that every triangulation of an orbifold $\mathcal{O}$ has the same number of arcs. Moreover, since  the flip of a standard (pending) arc will always be another standard (pending) arc, the numbers of standard and pending arcs in any triangulation of a given orbifold are also constant.

\subsection{Generalized Cluster Algebra from Orbifold}\label{sec:GenCAFromOrb}

We describe how to produce a generalized cluster algebra from a triangulated orbifold following \cite{chekhov2014teichmuller}. Let $\mathcal{O}$ be an orbifold and fix a triangulation $T = \{\tau_1,\ldots,\tau_n\}$. We form a generalized cluster quiver $Q^T_{\text{gen}} = (Q_0,Q_1)$ based on $T$. We identify $Q_0$ with $[n]$ so that each arc $\tau_i$ in $T$ has a corresponding vertex $i$. We partition $Q_0$ by setting $S \subseteq [n]$ to be the set of vertices $i$ such that $\tau_i$ is standard and $P \subseteq [n]$ to be the set of vertices $j$ such that $\tau_j$ is pending. If $\tau_i$ and $\tau_j$ are edges of the same triangle in the triangulation, and $\tau_j$ immediately follows $\tau_i$ in counterclockwise order, we set an edge $i \to j$. See Figure \ref{fig:OrbifoldTriangles}.

The operations of flipping arcs in an orbifold and mutating a generalized cluster quiver agree. As an illustration, compare Figures \ref{fig:GenClQuivMutation} and \ref{fig:MutateStdAndPendArcs}. 

\begin{lemma}\label{lem:FlipArcMutateQuiver}
Let $T = \{\tau_1,\ldots,\tau_n\}$ and let $Q^T_{\text{gen}}$ be the associated generalized cluster quiver. Let $T'$ be the effect of flipping arc $\tau_k$ in $T$ for any $1 \leq k \leq n$. Then, $Q^{T'}_{\text{gen}} = \mu_k(Q^T_{\text{gen}})$. 
\end{lemma}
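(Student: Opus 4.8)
The plan is to verify the claim locally, triangle by triangle, using the fact (due to Felikson--Shapiro--Tumarkin) that the flip $\mu_k(T)$ is well-defined and the observation that both the quiver $Q^{T}_{\text{gen}}$ and the mutation rule of Definition \ref{def:GenCAMutation} are built from purely local data. First I would reduce to the case where the arc $\tau_k$ being flipped bounds a well-understood neighborhood: $\tau_k$ is an edge of exactly two triangles (or of one triangle counted twice, in the folded/self-folded situations that can occur with pending arcs), so the arrows of $Q^{T}_{\text{gen}}$ incident to $k$, together with the arrows between the ``other'' edges of those two triangles, depend only on this quadrilateral-or-degenerate neighborhood. Consequently it suffices to check the identity $Q^{T'}_{\text{gen}} = \mu_k(Q^{T}_{\text{gen}})$ on each such configuration and note that arrows not touching vertex $k$ and not lying in a triangle adjacent to $\tau_k$ are unaffected on both sides.

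Next I would enumerate the local configurations according to the two possibilities for $\tau_k$: either $k \in S$ (a standard arc) or $k \in P$ (a pending arc), since the mutation rule and the flip behave differently in these two cases. For $k \in S$ with two distinct adjacent triangles, the neighborhood of $\tau_k$ is an ordinary quadrilateral and the computation is exactly the classical one from \cite{FST-triangulatedSurfaces}: flipping the diagonal replaces the counterclockwise-edge-ordering arrows in the expected way, matching step (1)--(3) of Definition \ref{def:GenCAMutation} when $k \in S$. For $k \in P$, the arc $\tau_k$ cuts out a monogon containing the orbifold point and sits inside a single triangle of the type shown in the middle or right column of Figure \ref{fig:OrbifoldTriangles}; I would draw the flipped arc $\tau_k'$ (again a pending arc, by the remark following the definition of flip) and read off $Q^{T'}_{\text{gen}}$, comparing with the ``add two arrows $i \to j$ for each path $i \to k \to j$'' rule. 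I should also separately handle the degenerate cases where a single triangle has $\tau_k$ as two of its sides, or where two of the three sides of an adjacent triangle coincide, and where a triangle has two pending arcs (right column of Figure \ref{fig:OrbifoldTriangles}); in each, one draws the picture and checks the arrow count and 2-cycle cancellation.

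The main obstacle I anticipate is not any single computation but the bookkeeping of the degenerate local configurations: triangles that are ``folded'' along $\tau_k$, triangles sharing two pending arcs, and the interaction of the counterclockwise edge-ordering convention with these. In these cases one must be careful that a path $i \to k \to j$ passing through $k$ twice (once into each copy of $\tau_k$) is counted correctly and that the resulting 2-cycles are deleted to reproduce the arrow multiplicities of $Q^{T'}_{\text{gen}}$. Once all local cases are checked, the lemma follows by patching: every arrow of $Q^{T'}_{\text{gen}}$ either survives unchanged from $Q^T_{\text{gen}}$ (if it avoids the neighborhood of $\tau_k$) or is accounted for by one of the verified local pictures, and symmetrically for $\mu_k(Q^T_{\text{gen}})$, so the two quivers coincide.
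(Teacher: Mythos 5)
Your proposed route is genuinely different from the one the paper takes. The paper does not give a self-contained proof of this lemma at all: having introduced the quiver $Q_{B,\mathbf{d}}$ and recorded (as Lemma~\ref{lem:GenClusterQuiverEqualsMatrix}, justified by ``a quick check'') that mutation of $Q_{B,\mathbf{d}}$ tracks matrix mutation, it then remarks that Labardini--Fragoso and Mou prove the corresponding statement for the skew-symmetrizable matrix attached to the triangulation, and leaves the reader to chain these together. Your plan, by contrast, is a direct local verification---enumerate the neighborhoods of $\tau_k$, check the arrow count and $2$-cycle cancellation in each, and patch. That buys self-containedness at the cost of bookkeeping; the paper buys brevity at the cost of outsourcing the content. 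Both are legitimate, and your case split on $k\in S$ versus $k\in P$ is exactly the right organizing principle, since these are the two branches of the mutation rule in Definition~\ref{def:GenCAMutation}.

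Two cautions on the details you sketch. First, the phrase ``one triangle counted twice, in the folded/self-folded situations that can occur with pending arcs'' conflates two different things: a pending arc is \emph{not} a side of a self-folded triangle. It cuts out a monogon containing an orbifold point, and that monogon is not a triangle of $T$; the pending arc borders exactly one genuine triangle. Moreover, since the orbifold is unpunctured, there are no self-folded triangles at all, so you need not worry about $\tau_k$ appearing twice as a side of a single triangle. What you \emph{do} need to handle is two distinct triangles sharing two sides (as in an annulus with one marked point on each boundary), and the configurations in Figure~\ref{fig:OrbifoldTriangles} where a triangle has one or two pending sides; for the latter you must check that the ``add two arrows $i\to j$ for each path through a pending vertex $k$'' rule reproduces the quiver read off from the flipped picture (compare Figures~\ref{fig:GenClQuivMutation} and~\ref{fig:MutateStdAndPendArcs}). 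Second, as written this is a program rather than a proof: the case-by-case computations are described but not carried out. The plan is sound and, once the degenerate-configuration list is corrected as above, completing it would yield a valid alternative proof.
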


Labardini-Fragoso and Mou provide a related result using skew-symmetrizable matrices in \cite{labardini2023gentleI}.

In order to define a generalized cluster algebra, we still  must define the exchange polynomials $Z_i$. In the case when all mutation degrees are 1 or 2, we only need to set pick terms $z_{i,1}$ where $Z_i(u) = 1 + z_{i,1} u + u^2$ whenever $d_i = 2$. We know that $d_i = 2$ whenever $\tau_i$ is a pending arc. In this case, if the orbifold point enclosed by $\tau_i$ has order $p$, we set $z_{i,1} = 2\cos(\pi/p)$. This choice of coefficient is shown by Chekhov and Shapiro to give the correct expression for how the $\lambda$-lengths of the orbifold change as we flip arcs. In our setting with all orbifold points having order 3, all coefficients $z_{i,1}$ will be $1$. Therefore, given a triangulation $T$ of an orbifold, we can denote the associated generalized cluster algebra simply as $\mathcal{A}(Q^T_{\text{gen}})$.

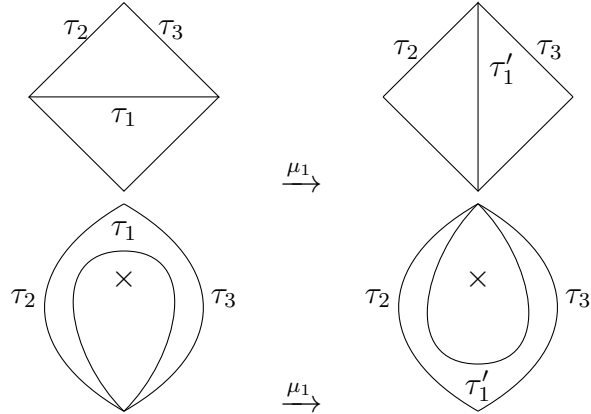
\begin{figure} 
    \centering
\begin{tabular}{ccc}
\begin{tikzpicture}[scale=1.25]
     \node[above] at (0.5,0.5){$\tau_2$};
     \draw(1,1) to (0,0) to (2,0) to (1,1);
     \draw (0,0) to (1,-1) to (2,0);
     \node[below] at (1,0) {$\tau_1$};
     \node[above] at (1.5,0.5){$\tau_3$};
    \end{tikzpicture} & $\xrightarrow{\mu_1}$ &
\begin{tikzpicture}[scale=1.25]
     \node[above, left] at (0.5,0.5){$\tau_2$};
     \draw (1,-1) to (1,1) to (0,0);
     \draw (0,0) to (1,-1) to (2,0);
     \draw(1,1) to (2,0);
     \node[right] at (1,0.3) {$\tau_1'$};
     \node[above, right] at (1.5,0.5){$\tau_3$};
    \end{tikzpicture}\\
\begin{tikzpicture}[scale = 1.25]
\draw[out=30,in=-30,looseness=1.5] (0,0.3) to (0,2.5);
\draw[out=150,in=-150,looseness=1.5] (0,0.3) to (0,2.5);
\draw[in=0,out=45,looseness =1.25] (0,0.3) to (0,2);
\draw[in=180,out=135,looseness=1.25] (0,0.3) to (0,2);
\draw[thick] (0,1.7) node {$\mathbf{\times}$};
\node[right] at (0.8,1.5){$\tau_3$};
\node[left] at (-0.8,1.5){$\tau_2$};
\node[above] at (0,2){$\tau_1$};
\end{tikzpicture} & $\xrightarrow{\mu_1}$ &
\begin{tikzpicture}[scale = 1.25]
\draw[out=30,in=-30,looseness=1.5] (0,0.3) to (0,2.5);
\draw[out=150,in=-150,looseness=1.5] (0,0.3) to (0,2.5);
\draw[in=0,out=45-90,looseness =1.25] (0,2.5) to (0,0.8);
\draw[in=180,out=135+90,looseness=1.25] (0,2.5) to (0,0.8);
\draw[thick] (0,1.7) node {$\mathbf{\times}$};
\node[right] at (0.8,1.5){$\tau_3$};
\node[left] at (-0.8,1.5){$\tau_2$};
\node[below] at (0,0.85){$\tau_1'$};
\end{tikzpicture}\\
\end{tabular}  
    \caption{Comparing examples of mutating standard and pending arcs. Unlabeled edges are assumed to be boundary edges. Compare these mutations to the generalized quiver mutations in Figure \ref{fig:GenClQuivMutation}} \label{fig:MutateStdAndPendArcs}
\end{figure}

\section{Snake Graphs}\label{sec:SnakeGraph}

 A snake graph is a bipartite graph whose set of \emph{perfect matchings} encodes the expansion of a cluster variable in a cluster algebra of surface type. Snake graphs were defined by Musiker and Schiffler in \cite{Musiker-Schiffler} for unpunctured surfaces and later by Musiker, Schiffler, and Williams for punctured surfaces \cite{MSW}. Band graphs were defined in \cite{MSW-bases} to give expansions for elements of a cluster algebra given by closed curves on a surface. These constructions were extended to generalized cluster algebras arising from unpunctured orbifolds (with orbifold points order at least 3) in \cite{banaian2020snake}. We review this latter construction and also recall the partial order which can be placed on the set of perfect matchings of a snake or band graph. 
\subsection{Snake Graphs and Band Graphs from Orbifolds}\label{subsec:SnakeGraphsfromOrbifold}

Here we provide a simplified version of the construction from \cite{banaian2020snake} for snake graphs from orbifolds where all points are order three. 

Fix a triangulation $T = \{\tau_1,\ldots,\tau_n\}$ of an orbifold $\mathcal{O}$. Let $\{\tau_{n+1},\ldots,\tau_{n+c}\}$ be the boundary arcs of $\mathcal{O}$. Let $\gamma$ be an arc on $\mathcal{O}$, possibly with self-intersection. We choose a representative of $\gamma$ from its isotopy class which has minimal self-intersections and minimal intersections with $T$. 

If $\gamma = \tau_i$ for $1 \leq i \leq n$, then $\mathcal{G}_{\gamma,T}$ is a single edge labeled $\tau_i$. Otherwise, $\gamma \notin T$ and therefore $\gamma$ must cross at least one arc in $T$.  

Choose an orientation for $\gamma$. Let $p_1,\ldots,p_d$ be the list of points in $\gamma \cap T$, with order given by the orientation of $\gamma$. Let $\tau_{i_1},\ldots, \tau_{i_d}$ be such that $p_j$ lies on $\tau_{i_j}$; note the same arc from $T$ can appear multiple times in this list. For each standard arc $\tau_{i_j}$ that $\gamma$ crosses, we construct a square tile $G_{j}$ by taking the two triangles that $\tau_{i_j}$ borders and gluing them along $\tau_{i_j}$ such that either both have the same orientation as in $\mathcal{O}$ or neither. We say that the edge of the snake graph associated to $\tau_i$ is \emph{weighted} with variable $x_i$. We include a dashed line labeled with $i$; this is not considered an edge of the graph but rather a label for this tile. See the two tiles below. We say that the square tile $G_j$ has relative orientation $+1$ if the orientation of its triangles matches that of $\mathcal{O}$ and $-1$ otherwise. We denote this as $\textrm{rel}(G_j) = \pm 1.$

\begin{center}
\begin{tikzpicture}[scale = .14cm]
\draw (45:0.3cm) to node[above]{$x_c$}(135:0.3cm) to node[left]{$x_b$}(180+45:0.3cm) to node[below]{$x_a$}(270+45:0.3cm) to node[right]{$x_d$} (45:0.3cm);
\draw[gray, dashed](135:0.3cm) to node[above]{$i$} (315:0.3cm);
\draw[xshift = 1 cm] (45:0.3cm) to node[above]{$x_d$}(135:0.3cm) to node[left]{$x_a$}(180+45:0.3cm) to node[below]{$x_b$}(270+45:0.3cm) to node[right]{$x_c$} (45:0.3cm);
\draw[xshift = 1 cm, gray, dashed](135:0.3cm) to node[above]{$i$} (315:0.3cm);
\end{tikzpicture}
\end{center}

Next we consider the case when $\tau_{i_j}$ is a pending arc.   If $\tau_{i_1}$ is a pending arc and $\tau_{i_1} \neq \tau_{i_2}$, or if $\tau_{i_d}$ is a pending arc and $\tau_{i_{d-1}} \neq \tau_{i_d}$, we have a configuration as in Figure \ref{fig:WindBeforeFirst}. Since the orbifold point is order three, up to isotopy there is only one such local configuration for $\gamma$. 

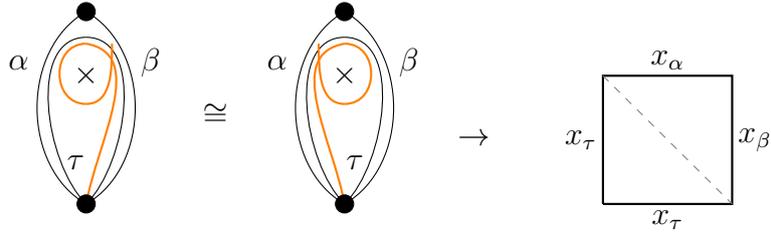
\begin{figure}
\centering
\begin{tikzpicture}[scale = 1.7]
\draw[out=30,in=-30] (0,0) to (0,1.5);
\draw[out=150,in=-150] (0,0) to (0,1.5);
\node[left,xshift=2pt] at (-0.4,1.1) {$\alpha$};
\node [right,xshift=-3pt] at (0.4,1.1) {$\beta$};
\node [below,yshift=3pt,xshift=1pt] at (-0.1,0.4) {$\tau$};
\draw[in=0,out=45,looseness =0.9] (0,0) to (0,1.3);
\draw[in=180,out=135,looseness=0.9] (0,0) to (0,1.3);
\draw[orange, thick,out=80,in=-10,looseness = 1] (0,0) to (0,1.25);
\draw[orange, thick,out=180,in=180,looseness = 1.5] (0,1.25) to (0,0.78);
\draw[orange, thick,out=0,in=270,looseness = 1] (0,0.78) to (0.2,1.25);
\draw[fill=black] (0,0) circle [radius=2pt];
\draw[fill=black] (0,1.5) circle [radius=2pt];
\draw[thick] (0,1) node {$\mathbf{\times}$};

\node[] at (1,0.7) {$\cong$};

\draw[out=30,in=-30] (2,0) to (2,1.5);
\draw[out=150,in=-150] (2,0) to (2,1.5);

\node[left,xshift=2pt] at (1.6,1.1) {$\alpha$};
\node [right,xshift=-3pt] at (2.4,1.1) {$\beta$};
\node [below,yshift=3pt,xshift=-1pt] at (2.1,0.4) {$\tau$};
\draw[in=0,out=45,looseness =0.9] (2,0) to (2,1.3);
\draw[in=180,out=135,looseness=0.9] (2,0) to (2,1.3);

\draw[orange, thick,out=100,in=190,looseness = 1] (2,0) to (2,1.25);
\draw[orange, thick,out=0,in=0,looseness = 1.5] (2,1.25) to (2,0.78);
\draw[orange, thick,out=180,in=270,looseness = 1] (2,0.78) to (1.8,1.25);

\draw[fill=black] (2,0) circle [radius=2pt];
\draw[fill=black] (2,1.5) circle [radius=2pt];
\draw[thick] (2,1) node {$\mathbf{\times}$};

\node[] at (3,0.5) {$\to$};

\draw[thick] (4,0) to node[below,midway,yshift = 2pt]{$x_\tau$} (5,0) to node[right, midway, xshift = -2pt]{$x_\beta$} (5,1) to node[above, midway, yshift = -2pt]{$x_\alpha$} (4,1) to node[left, midway, xshift = 2pt]{$x_\tau$} (4,0);
\draw[gray,dashed] (4,1) to (5,0);
\end{tikzpicture}
\caption{The tile for a snake graph when an arc winds around an orbifold point before its first crossing with $T$ or after its last crossing.  The tile shown is in the positive orientation; one can flip the edges labeled $x_\alpha$ and $x_\beta$ for the tile with negative orientation. }\label{fig:WindBeforeFirst}
\end{figure}

The other case is when $\gamma$ crosses a pending arc $\tau_{i_j}$ twice consecutively; that is, $\tau_{i_j} = \tau_{i_{j-1}}$ or $\tau_{i_j} = \tau_{i_{j+1}}$. Since we are considering only orbifold points of order three, up to isotopy we have only four possible configurations of crossings as given in Table \ref{table:SnakeGraphPuzzlePiece}. In the case when this pair of crossings is the first or last two crossings of $\gamma$ and $T$, one can also obtain the relevant piece of the snake graph by taking an appropriate entry from the table and removing one of the tiles labeled $\alpha$ or $\beta$.

\begin{center}
\begin{table}
{\renewcommand{\arraystretch}{2}
\centering
\begin{tabular}{|c|c|c|c|}
\hline
 
\begin{tikzpicture}[scale = 1.7]

\tikzset{->-/.style={decoration={
  markings,
  mark=at position #1 with {\arrow{>}}},postaction={decorate}}}

\draw[out=30,in=-30,looseness=1.5] (0,0.3) to (0,1.5);
\draw[out=150,in=-150,looseness=1.5] (0,0.3) to (0,1.5);

\draw[in=0,out=45,looseness =1.25] (0,0.3) to (0,1.3);
\draw[in=180,out=135,looseness=1.25] (0,0.3) to (0,1.3);

\draw[orange,thick,out=-15,in=195, ->] (-0.5,0.7) to (0.5,0.7);

\draw[fill=black] (0,0.3) circle [radius=2pt];
\draw[fill=black] (0,1.5) circle [radius=2pt];
\draw[thick] (0,1) node {$\mathbf{\times}$};

\node[] at (-0.26,1.4) {$\alpha$};
\node[] at (0.26,1.41) {$\beta$};
\node[] at (-0.2,0.85) {$\tau$};

\end{tikzpicture}
 & 
 \begin{tikzpicture}[scale = 1.7]

\tikzset{->-/.style={decoration={
  markings,
  mark=at position #1 with {\arrow{>}}},postaction={decorate}}}


\draw[out=30,in=-30,looseness=1.5] (0,0.3) to (0,1.5);
\draw[out=150,in=-150,looseness=1.5] (0,0.3) to (0,1.5);

\draw[in=0,out=45,looseness =1.25] (0,0.3) to (0,1.3);
\draw[in=180,out=135,looseness=1.25] (0,0.3) to (0,1.3);

\draw[orange,thick,out=-15,in=270] (-0.5,0.7) to (0.2,1);
\draw[orange,thick,out = 90, in = 0, ->] (0.2,1) to (-0.5,1.25);

\draw[fill=black] (0,0.3) circle [radius=2pt];
\draw[fill=black] (0,1.5) circle [radius=2pt];
\draw[thick] (0,1) node {$\mathbf{\times}$};

\node[] at (-0.26,1.4) {$\alpha$};
\node[] at (0.26,1.41) {$\beta$};
\node[] at (-0.2,0.85) {$\tau$};

\end{tikzpicture}& 
\begin{tikzpicture}[scale = 1.7]

\tikzset{->-/.style={decoration={
  markings,
  mark=at position #1 with {\arrow{>}}},postaction={decorate}}}


\draw[out=30,in=-30,looseness=1.5] (0,0.3) to (0,1.5);
\draw[out=150,in=-150,looseness=1.5] (0,0.3) to (0,1.5);

\draw[in=0,out=45,looseness =1.25] (0,0.3) to (0,1.3);
\draw[in=180,out=135,looseness=1.25] (0,0.3) to (0,1.3);

\draw[orange,thick,out=-15,in=270] (-0.5,0.7) to (0.2,1);
\draw[orange, thick, out = 90, in = 90, looseness = 1.5] (0.2,1) to (-0.2,1);
\draw[orange, thick, out = 270, in = 270, looseness = 1.5] (-0.2,1) to (0.15,1);
\draw[orange, thick, out = 90, in = -15, ->] (0.15,1) to (-0.5,1.3);
\draw[fill=black] (0,0.3) circle [radius=2pt];
\draw[fill=black] (0,1.5) circle [radius=2pt];
\draw[thick] (0,1) node {$\mathbf{\times}$};

\node[] at (-0.26,1.4) {$\alpha$};
\node[] at (0.26,1.41) {$\beta$};
\node[] at (-0.2,0.85) {$\tau$};

\end{tikzpicture}&
\begin{tikzpicture}[scale = 1.7]

\tikzset{->-/.style={decoration={
  markings,
  mark=at position #1 with {\arrow{>}}},postaction={decorate}}}


\draw[out=30,in=-30,looseness=1.5] (0,0.3) to (0,1.5);
\draw[out=150,in=-150,looseness=1.5] (0,0.3) to (0,1.5);

\draw[in=0,out=45,looseness =1.25] (0,0.3) to (0,1.3);
\draw[in=180,out=135,looseness=1.25] (0,0.3) to (0,1.3);

\draw[orange,thick,out=-15,in=270] (-0.5,0.7) to (0.2,1);
\draw[orange, thick, out = 90, in = 90, looseness = 1.5] (0.2,1) to (-0.2,1);
\draw[orange, thick, out = 270, in = 195, ->] (-0.2,1) to (0.5,0.7);

\draw[fill=black] (0,0.3) circle [radius=2pt];
\draw[fill=black] (0,1.5) circle [radius=2pt];
\draw[thick] (0,1) node {$\mathbf{\times}$};

\node[] at (-0.26,1.4) {$\alpha$};
\node[] at (0.26,1.41) {$\beta$};
\node[] at (-0.2,0.85) {$\tau$};

\end{tikzpicture}\\   \hline  
\begin{tikzpicture}
\draw[gray,thick,dashed] (0,1) to node[midway,right,yshift=2pt,xshift=-2pt]{ $\alpha$} (1,0);
\draw[gray,thick,dashed] (1,1) to node[midway,right,yshift=2pt,xshift=-2pt]{ $\tau$} (2,0);
\draw[gray,thick,dashed] (1,2) to node[midway,right,yshift=2pt,xshift=-2pt]{ $\tau$} (2,1);
\draw[gray,thick,dashed] (2,2) to node[midway,right,yshift=2pt,xshift=-2pt]{ $\beta$} (3,1);
\draw[thick] (0,0) to (1,0) ;
\draw[thick] (1,0)to node[midway,right ,scale=1,yshift=-7pt, xshift = -2pt]{$x_\beta$} (1,1);
\node [above,scale=1,yshift=-2pt] at (0.5,1) {$x_\tau$};
\draw[thick] (1,1) to  (0,1) to (0,0);
\draw[thick] (1,0) to node[midway,below,yshift=2pt]{$x_\alpha$} (2,0) to node[right,yshift = -2pt]{$x_\tau$} (2,1) ;
\draw[thick] (1,2) to node[midway,above, yshift = -2pt]{$x_\beta$} (2,2);
\draw[thick] (2,2) to node[midway,right,xshift = -2pt,yshift=-7pt]{$x_\alpha$} (2,1) to node[midway,above,yshift=-2pt]{$x_\tau$} (1,1) to node[midway,left,xshift=3pt]{$x_\tau$} (1,2);
\draw[thick](2,1) to (3,1) to (3,2) to (2,2);
\node[below,yshift = 2pt] at (2.5,1){$x_\tau$};
\end{tikzpicture} & 
\begin{tikzpicture}
\draw[gray,thick,dashed] (0,1) to node[midway,right,yshift=2pt,xshift=-2pt]{ $\alpha$} (1,0);
\draw[gray,thick,dashed] (1,1) to node[midway,right,yshift=2pt,xshift=-2pt]{ $\tau$} (2,0);
\draw[gray,thick,dashed] (2,1) to node[midway,right,yshift=2pt,xshift=-2pt]{ $\tau$} (3,0);
\draw[gray,thick,dashed] (3,1) to node[midway,right,yshift=2pt,xshift=-2pt]{ $\beta$} (4,0);
\draw[thick] (0,0) to (1,0) to node[midway, below,yshift = 2pt]{$x_\alpha$} (2,0) to node[right,xshift = -2pt,yshift = -5pt]{$x_\tau$} (2,1) to node[midway,above,yshift=-2pt]{$x_\tau$} (1,1) to node[midway,above,yshift=-2pt]{$x_\tau$} (0,1) to (0,0);
\draw[thick] (2,0) to node[midway, below,yshift = 2pt]{$x_\tau$} (3,0) to node[midway, below,yshift = 2pt]{$x_\tau$} (4,0) to (4,1) to (3,1) to node[midway, above,yshift = -2pt]{$x_\beta$} (2,1);
\draw[thick] (1,0) to node[midway, right,xshift = -2pt, yshift = -5pt]{$x_\beta$}(1,1);
\draw[thick] (3,0) to node[midway, right,xshift = -2pt, yshift = -5pt]{$x_\alpha$} (3,1);
\end{tikzpicture}&
\begin{tikzpicture}
\draw[gray,thick,dashed] (0,1) to node[midway,right,yshift=2pt,xshift=-2pt]{ $\alpha$} (1,0);
\draw[gray,thick,dashed] (1,1) to node[midway,right,yshift=2pt,xshift=-2pt]{ $\tau$} (2,0);
\draw[gray,thick,dashed] (1,2) to node[midway,right,yshift=2pt,xshift=-2pt]{ $\tau$} (2,1);
\draw[gray,thick,dashed] (1,3) to node[midway,right,yshift=2pt,xshift=-2pt]{ $\alpha$} (2,2);
\draw[thick] (0,0) to (1,0) to node[midway,below,yshift = 2pt]{$x_\alpha$} (2,0) to node[midway, right,xshift = -2pt] {$x_\tau$} (2,1) to node[midway, right,xshift = -2pt] {$x_\alpha$} (2,2) to (2,3) to (1,3) to node[midway, left,xshift = 2pt] {$x_\tau$} (1,2) to node[midway, left,xshift = 2pt] {$x_\tau$} (1,1) to node[midway,above,yshift=-2pt]{$x_\tau$} (0,1) to (0,0);
\draw[thick] (1,0) to node[midway, right, yshift = -7pt, xshift = -2pt]{$x_\beta$} (1,1) to node[midway, above, xshift = -4pt, yshift = -2pt]{$x_\tau$} (2,1);
\draw[thick] (1,2) to node[midway, above, xshift = -4pt, yshift = -2pt]{$x_\beta$} (2,2);
\end{tikzpicture}&
\begin{tikzpicture}
\draw[gray,thick,dashed] (0,1) to node[midway,right,yshift=2pt,xshift=-2pt]{ $\alpha$} (1,0);
\draw[gray,thick,dashed] (1,1) to node[midway,right,yshift=2pt,xshift=-2pt]{ $\tau$} (2,0);
\draw[gray,thick,dashed] (2,1) to node[midway,right,yshift=2pt,xshift=-2pt]{ $\tau$} (3,0);
\draw[gray,thick,dashed] (2,2) to node[midway,right,yshift=2pt,xshift=-2pt]{ $\alpha$} (3,1);
\draw[thick] (0,0) to (1,0) to node[midway,below,yshift = 2pt]{$x_\alpha$} (2,0) to node[midway,below,yshift = 2pt]{$x_\tau$} (3,0) to node[midway, right,xshift = -2pt] {$x_\alpha$} (3,1) to (3,2) to (2,2) to node[midway, left,xshift = 2pt] {$x_\tau$} (2,1) to node[midway,above,yshift=-2pt]{$x_\tau$} (1,1) to node[midway,above,yshift=-2pt]{$x_\tau$} (0,1) to (0,0);
\draw[thick] (1,0) to node[midway, right, yshift = -7pt, xshift = -2pt]{$x_\beta$} (1,1);
\draw[thick] (2,0) to node[midway, right, yshift = -7pt, xshift = -2pt]{$x_\tau$} (2,1) to node[midway, above, yshift = -2pt, xshift = -4pt]{$x_\beta$} (3,1);
\end{tikzpicture} \\\hline
\end{tabular}}
\caption{Snake graph puzzle pieces for when an arc $\gamma$ crosses a pending arc two times consecutively. The arcs $\alpha$ and $\beta$ could be standard or pending arcs.} \label{table:SnakeGraphPuzzlePiece}
\end{table}
\end{center}

To construct the snake graph $\mathcal{G}_{\gamma,T}$, we will glue together tiles corresponding to arcs crossed consecutively by $\gamma$. If $\gamma$ crosses $\tau_i$ and $\tau_{i+1}$ consecutively, and $\tau_i$ and $\tau_{i+1}$ are distinct arcs, then these arcs form a triangle in the triangulation. Call the third arc in this triangle $\tau_{[i]}$. Then, we glue tiles $G_i$ and $G_{i+1}$ along the edge $\tau_{[i]}$; we see examples of this gluing in the puzzle pieces in Table \ref{table:SnakeGraphPuzzlePiece}. We use the appropriate planar embeddings so $\textrm{rel}(T,G_i) \neq \textrm{rel}(T,G_{i+1})$ and establish the convention that we use the positively oriented tile associated to $\tau_{i_1}$, i.e.,  $\textrm{rel}(G_1) = +1$. We also set the convention that we glue $G_{i+1}$ onto either the north or the east edge of $G_i$.

Now, consider a closed curve $\xi$ in $\mathcal{O}$ with triangulation $T = \{\tau_1,\ldots,\tau_n\}$. Again, pick a representative from the isotopy class of $\xi$ with minimal intersections with $T$ and minimal self intersections. Suppose this representative is not contractible and does not enclose a disk containing exactly one orbifold point. Pick a point $p$ on $\xi$ which is not a point in $\xi \cap T$ and is not inside a pending arc. Choose an orientation for $\xi$. Then construct a snake graph $\mathcal{G}_{\xi,T}$ for $\xi$, starting and ending at $p$ and using the chosen orientation. If $\tau_{i_1}$ is the first arc crossed by $\xi$ and $\tau_{i_d}$ is the last, with this chosen orientation and starting point, then $\tau_{i_1}$ and $\tau_{i_d}$ are distinct arcs which bound the triangle which contains $p$. Let $\tau'$ be the third arc in this triangle. We form the \emph{band graph} $\widetilde{\mathcal{G}}_{\xi,T}$ for $\xi$ by gluing the first and last tiles, $G_1$ and $G_d$, along the edges labeled by $\tau'$.

\subsection{Height Monomials and Expansion Formulas}

The snake graph expansion formula  uses the \emph{height monomial} of a snake graph to encode the $y$-variables. For both the surface and the orbifold case, these formulas work in the context of cluster algebras with principal coefficients.
To define a height monomial, we first give some definitions related to snake graphs. 

\begin{definition}\label{def:MinMaxMatch}
\begin{enumerate}
    \item Call an edge of a snake graph \emph{internal} if it borders two tiles. If an edge is not internal, call it a \emph{boundary} edge.
    \item A \emph{perfect matching} of a graph is a subset $P$ of the set of edges such that every vertex is incident to exactly one element of $P$.
    \item Any snake graph, $\mathcal{G}$, has exactly two matchings which only use boundary edges. One of these boundary matchings uses the Southern edge of the first tile. Call this matching the \emph{minimal matching}, $P_-$. Call the other matching with all boundary edges the \emph{maximal matching} $P_+$.
\end{enumerate}
\end{definition}

Given two perfect matchings $P$ and $Q$, let $P\ominus Q := (P\cup Q) \backslash (P \cap Q)$ be the symmetric difference. In \cite{Musiker-Schiffler}, Musiker and Schiffler describe the structure of  the symmetric difference between an arbitrary matching and a minimal matching.

\begin{lemma}[\cite{Musiker-Schiffler}]\label{lem:SymmDiff}
Let $\mathcal{G}$ be a snake graph, $P_-$ the minimal matching of $G$ and $P$ an arbitrary matching of $\mathcal{G}$. Then $P \ominus P_-$ is the set of boundary edges of a (possibly disconnected) subgraph $\mathcal{G}_P$ of $\mathcal{G}$. 
\end{lemma}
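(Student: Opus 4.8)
\textbf{Proof proposal for Lemma \ref{lem:SymmDiff}.}

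The plan is to argue by induction on the number of tiles of $\mathcal{G}$, exploiting the recursive structure of a snake graph as a sequence of tiles $G_1, \dots, G_d$ glued successively along the north or east edge of the previous tile. First I would recall that the minimal matching $P_-$ is the unique boundary matching containing the south edge of $G_1$; an explicit description is that on each tile $P_-$ alternates between ``south/north'' edges and ``east/west'' edges depending on the parity of the position of that tile in the zig-zag, so that $P_-$ consists entirely of boundary edges and no internal edge lies in $P_-$. The key local fact I would isolate first is: for any perfect matching $P$ and any internal edge $e$ (shared by tiles $G_j$ and $G_{j+1}$), the four boundary edges of the ``bad'' square formed by $G_j \cup G_{j+1}$ together with $e$ form a small cycle, and toggling along such a cycle sends one perfect matching to another. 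More precisely, the symmetric difference of two perfect matchings of any graph is always a disjoint union of cycles and isolated vertices (standard), so $P \ominus P_-$ is already a disjoint union of cycles in $\mathcal{G}$; the real content of the lemma is the \emph{planar/geometric} statement that each such cycle bounds a union of tiles, i.e.\ is the boundary of a subgraph $\mathcal{G}_P$ which is a connected (or disconnected) ``staircase'' sub-snake-graph.

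The main step is therefore to show that any cycle $C$ in $\mathcal{G}$ which can occur in a symmetric difference $P \ominus P_-$ is the boundary of the union of the tiles it encloses. Here I would use the structural fact that a snake graph is a planar bipartite graph in which every bounded face is one of the square tiles $G_j$. Since $\mathcal{G}$ is planar, any cycle $C$ encloses a set of faces, and its edge set is the symmetric difference of the boundaries of those enclosed tiles; hence $C$ is automatically the boundary of a subgraph $\mathcal{G}_P$ which is a union of tiles. What remains is to check that this union of tiles is itself a (possibly disconnected) snake graph, i.e.\ that the enclosed tiles form a contiguous run $G_a, G_{a+1}, \dots, G_b$ in the linear order — this is where I would invoke the zig-zag shape: because tiles are glued only along north/east edges, a cycle in $\mathcal{G} \ominus P_-$ cannot ``skip'' a tile, as skipping would force an unmatched vertex. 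I would make this precise by tracking, along the cycle $C$, how it must pass through the diagonal corners of successive tiles, using that on the tiles \emph{not} enclosed by $C$ the matching $P$ agrees with $P_-$ (because $P_-$ uses only boundary edges, and the portion of $C$ forms a closed loop, so vertices outside the enclosed region are matched as in $P_-$).

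The step I expect to be the main obstacle is the careful bookkeeping that the enclosed region is a union of tiles forming a valid (sub-)snake graph rather than some irregular planar region — in other words, ruling out ``diagonal'' cycles that would enclose, say, two tiles meeting only at a corner. This requires using the precise local gluing rules (north/east gluing, alternating relative orientation) rather than just abstract planarity, and is exactly the point where one must be faithful to the construction in Section \ref{subsec:SnakeGraphsfromOrbifold}; once that is settled, the identification of $P \ominus P_-$ with the boundary edges of $\mathcal{G}_P$ is immediate. Since this lemma is quoted verbatim from \cite{Musiker-Schiffler} and the orbifold snake graphs $\mathcal{G}_{\gamma,T}$ are, as abstract graphs, still ordinary snake graphs (the orbifold data only affects which surface arcs label which edges, via the puzzle pieces of Table \ref{table:SnakeGraphPuzzlePiece}), I would in fact present the proof as a direct citation with at most a remark that the argument of \cite{Musiker-Schiffler} applies unchanged because the underlying graph is a standard snake graph.
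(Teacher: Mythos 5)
Your concluding decision to present this as a direct citation to \cite{Musiker-Schiffler} matches the paper exactly: the lemma is stated there with that attribution and no proof, and your observation that the orbifold snake graphs are, as abstract graphs, ordinary snake graphs (the orbifold data only affects edge/tile labels) is precisely why the cited argument applies verbatim. The sketch you give en route --- the symmetric difference of two perfect matchings is a disjoint union of alternating cycles, and in the planar snake graph each such cycle bounds a contiguous run of tiles --- is a sound reconstruction of the underlying idea, with the minor caveat that contiguity of the enclosed tiles already follows from the Jordan curve theorem applied to the planar embedding, rather than needing the specific north/east gluing convention as you suggest.
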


The structure of $P \ominus P_-$ inspires the definition of a \emph{height monomial}. For the following, we recall that tiles of a snake graph $\mathcal{G}_{\gamma,T}$ are labeled with arcs  crossed by $\gamma$.

\begin{definition}\label{def:HeightMonomial}
Let $P$ be a perfect matching of $\mathcal{G}_{\gamma,T}$, where $T = \{\tau_1,\ldots,\tau_n\}$, and let $\cup_{j \in J} G_j$ be the subgraph of $\mathcal{G}_{\gamma,T}$ enclosed by $P \ominus P_-$. We define the \emph{height vector} of the perfect matching $\mathbf{h}(P) = (h_1,\ldots,h_n) \in \mathbb{N}^n$ where $h_i$ is the number of times a tile with label $\tau_i$ appears in $\cup_{j \in J} G_j$. We also define the \emph{height monomial} $y(P) = y_1^{h_1}\cdots y_n^{h_n}$. 
\end{definition}

The definition of height monomial is generalized for the general orbifold case in \cite{banaian2020snake}, but we will not need this generalization when only working with orbifold points of order 3. 

In the following, given a triangulation $T$ of a surface $(S,M)$, we can build $Q^T$ in the same way as in Section \ref{sec:GenCAFromOrb}. Then, the cluster algebra associated to $(S,M)$ will be $\mathcal{A}(Q^T)$. In this cluster algebra, there are bijections between clusters and triangulations and between cluster variables and arcs \cite{fomin2007cluster}. We let $[x_\gamma]^T$ denote the cluster variable in $\mathcal{A}(Q^T)$ associated to the arc $\gamma$ written in terms of the cluster determined by $T$.

\begin{theorem}[\cite{MSW}]\label{thm:MSWClusterExpansion}
Let $(S,M)$ be a surface with triangulation $T$, and let $\gamma$ be an arc on $(S,M)$. Let $\text{cross}(\gamma,T)$ be the monomial $x_{\tau_1}^{a_1}\cdots x_{\tau_n}^{a_n}$ where $a_i$ is the number of distinct crossings of $\tau_i$ by $\gamma$.  Then, \[
[x_\gamma]^T = \frac{1}{\text{cross}(\gamma,T)} \sum_P \text{wt}(P) y(P)
\]
where we sum over all perfect matchings of $P$ and $\text{wt}(P)$ is the product of the weights of all edges in $P$.
\end{theorem}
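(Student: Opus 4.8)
Since this is quoted from \cite{MSW}, I only outline a proof strategy; I would organize it in two stages. In the first stage I prove an intermediate \emph{$T$-path expansion}, $[x_\gamma]^T = \sum_{\alpha} x(\alpha)\,y(\alpha)$, summed over \emph{complete $(T,\gamma)$-paths} $\alpha$: walks in $S$ from one endpoint of $\gamma$ to the other whose steps are arcs or boundary segments of $T$, alternating in type, arranged so that the steps lying on the arcs $\tau_{i_1},\dots,\tau_{i_d}$ crossed by $\gamma$ occur in that order, each following $\gamma$ on a definite side; here $x(\alpha)$ is the Laurent monomial with these crossed arcs in the denominator and the remaining steps in the numerator, and $y(\alpha)$ is the corresponding principal-coefficient monomial. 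In the second stage I build a weight-preserving bijection between complete $(T,\gamma)$-paths and perfect matchings of $\mathcal{G}_{\gamma,T}$, which converts the $T$-path formula into the one in the statement.

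For the first stage I induct on $d = \text{cross}(\gamma,T)$. If $d=0$ then $\gamma=\tau_i\in T$, there is a single trivial $T$-path contributing $x_{\tau_i}$, and $\mathcal{G}_{\gamma,T}$ is a single edge. If $d=1$, $\gamma$ crosses one arc $\tau_k$, the two complete $(T,\gamma)$-paths are the two ways around the quadrilateral with diagonals $\tau_k$ and $\gamma$, and the identity is exactly the principal-coefficient Ptolemy (exchange) relation for $x_{\tau_k}$ in $\mathcal{A}(Q^T)$, using that cluster variables correspond to arcs and mutation to flips \cite{fomin2007cluster}. For $d\ge 2$, flip the first crossed arc $\tau_{i_1}$ to get $T' = \mu_{i_1}(T)$ with new arc $\tau_{i_1}'$; choosing a minimal-position representative of $\gamma$ with respect to $T'$, the crossing number strictly drops, so the inductive hypothesis gives $[x_\gamma]^{T'}$, and I then substitute the exchange relation expressing $x_{\tau_{i_1}'}$ in terms of the cluster of $T$. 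Expanding, each complete $(T',\gamma)$-path of $\gamma$ splits, according to which monomial of the substituted relation is chosen, into complete $(T,\gamma)$-paths of $\gamma$; one checks that this accounts for each complete $(T,\gamma)$-path exactly once and that the $x$- and $y$-monomials match. (On an unpunctured surface, in minimal position $\gamma$ never crosses one arc twice in a row, so there is no analogue of the pending-arc puzzle pieces of Table~\ref{table:SnakeGraphPuzzlePiece}, which keeps this step uniform.)

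For the second stage, $\mathcal{G}_{\gamma,T}$ has one tile $G_j$ for each crossing, and a perfect matching of $\mathcal{G}_{\gamma,T}$ amounts, tile by tile, to a consistent sequence of local choices: at $G_j$ the two ``diagonal-adjacent'' edge pairs correspond precisely to passing $\tau_{i_j}$ on one side or the other in a $T$-path. This yields the bijection $\alpha \leftrightarrow P$; it is weight-preserving by inspection of the tile edge-labels, and $y(\alpha)=y(P)$ follows from Lemma~\ref{lem:SymmDiff} after identifying the enclosed subgraph $\mathcal{G}_P$ with the set of crossings where $\alpha$ differs from the minimal path (which corresponds to $P_-$). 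I expect the main obstacle to be the inductive step of the first stage: controlling how complete $(T,\gamma)$-paths transform under a flip and verifying that the cross-terms of the substituted Ptolemy relation reproduce \emph{all} $T$-paths of $T$ with the right multiplicities and no spurious ones — equivalently, in the matching language, that local choices at consecutive tiles are jointly realizable exactly when they glue to a genuine perfect matching of $\mathcal{G}_{\gamma,T}$ — together with the bookkeeping that $x$- and $y$-data are preserved throughout.
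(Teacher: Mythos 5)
The paper quotes this result from \cite{MSW} and does not prove it --- it enters only as a black box, later extended to orbifolds in Theorem~\ref{thm:EKClusterExpansion} --- so there is no internal argument to compare against. Your two-stage sketch (a $T$-path expansion proved by flip induction, followed by a weight-preserving bijection between complete $(T,\gamma)$-paths and perfect matchings of $\mathcal{G}_{\gamma,T}$) is the historically standard route for unpunctured surfaces (Schiffler--Thomas; Musiker--Schiffler), and it is a faithful reconstruction at the level of ideas; the compressions you flag --- controlling how $T$-paths transform under a flip, and the consistency condition in the tile-by-tile correspondence --- are indeed where the substantive work in the cited references lives. Two cautions if you wanted to fill it in. First, stage~2 is not literally a free binary choice per tile, since the matching condition imposes non-local constraints; the usual identification sends the arcs appearing in the numerator of a $T$-path's Laurent monomial to the $d+1$ matched edges, and the nontrivial content is showing this hits every perfect matching exactly once. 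Second, deducing $y(\alpha)=y(P)$ from Lemma~\ref{lem:SymmDiff} requires the sharper fact that $\mathcal{G}_P$ is a disjoint union of full tiles, not an arbitrary subgraph. Neither point is an error --- they are exactly where a sketch of an externally quoted theorem legitimately defers to the source.
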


\begin{theorem}[\cite{banaian2020snake}]\label{thm:EKClusterExpansion}
Theorem \ref{thm:MSWClusterExpansion} also holds for a generalized cluster algebra $\mathcal{A}(Q^T_{\text{gen}})$ from an unpunctured orbifold $\mathcal{O}$ with triangulation $T$. 
\end{theorem}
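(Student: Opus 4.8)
The plan is to reduce the statement to the surface case (Theorem~\ref{thm:MSWClusterExpansion}) by an unfolding argument. Given an unpunctured orbifold $\mathcal{O}$ with all orbifold points of order three and a triangulation $T$, one constructs an orbifold cover $\widehat{\mathcal{O}} \to \mathcal{O}$ which is an honest surface: near each order-three orbifold point the cover is a genuine branched/unbranched surface cover, and $T$ lifts to a triangulation $\widehat{T}$ of $\widehat{\mathcal{O}}$. This unfolding is exactly the mechanism used by Felikson--Shapiro--Tumarkin to relate orbifolds to surfaces, and by the first author and Kelley in \cite{banaian2020snake}; the generalized cluster algebra $\mathcal{A}(Q^T_{\mathrm{gen}})$ is obtained from the ordinary cluster algebra $\mathcal{A}(Q^{\widehat{T}})$ by the folding that identifies the variables on the fibers over each orbifold point and introduces the exchange polynomial $Z_i(u) = 1 + u + u^2$ (coefficient $2\cos(\pi/3)=1$) at each pending arc. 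Since Theorem~\ref{thm:MSWClusterExpansion} holds on $\widehat{\mathcal{O}}$, it suffices to check that the snake-graph construction of Section~\ref{subsec:SnakeGraphsfromOrbifold} is the folded image of the Musiker--Schiffler--Williams snake graph of the lifted arc $\widehat{\gamma}$.

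First I would spell out the correspondence between tiles. A standard arc $\tau_{i_j}$ crossed by $\gamma$ lifts to standard arcs in $\widehat T$ whose associated square tile is literally the tile $G_j$ of Section~\ref{subsec:SnakeGraphsfromOrbifold}; here there is nothing to fold and the local picture matches verbatim. The content is concentrated at pending arcs. A pending arc $\tau$ in $T$ has two lifts meeting at the preimage of the orbifold point, forming in $\widehat T$ a pair of arcs bounding a configuration over which the arc $\widehat\gamma$ travels; the possible ways $\gamma$ winds around an order-three point — enumerated up to isotopy in the excerpt as the (at most) four local configurations of Table~\ref{table:SnakeGraphPuzzlePiece} together with the ``wind before first crossing'' configuration of Figure~\ref{fig:WindBeforeFirst} — are in bijection with the homotopy classes of how $\widehat\gamma$ crosses these lifted arcs. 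I would verify, case by case through these five local pictures, that the subgraph of $\mathcal{G}_{\widehat\gamma,\widehat T}$ produced by the crossings over $\tau$ is precisely the corresponding puzzle piece in Table~\ref{table:SnakeGraphPuzzlePiece} (respectively Figure~\ref{fig:WindBeforeFirst}), with the edges over the two lifts of $\tau$ all carrying the single weight $x_\tau$ after folding. This is a finite check: once the tiles and their gluings agree, the sets of perfect matchings agree, the weight function $\mathrm{wt}(P)$ agrees (weights of the folded edges collapse to $x_\tau$, matching the $\mathrm{cross}$ normalization, which counts distinct crossings of $\tau$ in $\mathcal O$), and the height monomials agree (a tile labeled $\tau$ over either lift contributes to $h_\tau$), so the two expansion formulas coincide term by term.

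The main obstacle, and where I would spend the most care, is the pending-arc bookkeeping: making sure that the folding identifies exactly the right edges, that the relative-orientation conventions ($\mathrm{rel}(G_1)=+1$, alternating $\mathrm{rel}$ along the graph, gluing on the north or east edge) on $\mathcal{O}$ are compatible with the lifted conventions on $\widehat{\mathcal O}$, and that the normalization monomials match — on the orbifold side $\mathrm{cross}(\gamma,T)$ counts distinct crossings of each $\tau_i$, and when $\gamma$ crosses a pending arc twice consecutively this must be reconciled with the number of tiles labeled $\tau$ appearing in the puzzle piece and with the lifted crossing count on $\widehat{\mathcal O}$. A secondary subtlety is that one must confirm the unfolding of $\gamma$ is still a (possibly self-intersecting) arc with minimal intersections with $\widehat T$, so that Theorem~\ref{thm:MSWClusterExpansion} applies without modification; this follows from minimality of the chosen representative of $\gamma$ together with the local structure of the cover near the orbifold points. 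Once these compatibilities are in place, the theorem follows immediately from the surface case by folding.
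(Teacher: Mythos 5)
This theorem is quoted from \cite{banaian2020snake}; the present paper gives no proof, so the only thing to compare against is the cited argument. The argument in \cite{banaian2020snake} does not proceed by unfolding to a surface cover: it works entirely within the orbifold, constructing the snake graph from the puzzle pieces of Section~\ref{subsec:SnakeGraphsfromOrbifold} and establishing the expansion formula directly, by adapting the perfect-matching and skein/Ptolemy techniques of \cite{Musiker-Schiffler,MSW} to the trinomial exchange relations. Your unfolding proposal is therefore a genuinely different route.

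It is a natural idea, but two real gaps separate the sketch from a proof. First, the key step --- that $\mathcal{A}(Q^T_{\mathrm{gen}})$ is the ``folded image'' of $\mathcal{A}(Q^{\widehat T})$ under identifying variables along fibers --- is not available as a citable theorem. \cite{FST-orbTriang} unfold orbifolds in a different encoding (arcs \emph{to} orbifold points, skew-symmetrizable exchange matrices), not the Chekhov--Shapiro encoding of pending \emph{loops} with polynomial exchange relations used here, and establishing the requisite compatibility --- that a single flip of a pending arc downstairs corresponds to a sequence of three flips upstairs whose composite, after identification of $x$- and $y$-variables, reproduces the exchange polynomial $1+u+u^2$ and the principal-coefficient dynamics --- is itself a substantial folding theorem that would need its own proof. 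Second, the passage from the Musiker--Schiffler--Williams snake graph of the lift $\widehat\gamma$ to the orbifold snake graph of $\gamma$ is not a formal consequence of this folding: when $\gamma$ winds about an orbifold point it crosses $\tau$ twice, while $\widehat\gamma$ traverses one sheet of the branched triple cover and crosses two of the three lifts of $\tau$ once each; one must verify, for each local configuration in Figure~\ref{fig:WindBeforeFirst} and Table~\ref{table:SnakeGraphPuzzlePiece}, that the resulting tiles, gluings, relative orientations, edge weights, height vectors, and the normalization $\mathrm{cross}(\gamma,T)$ all agree after identification. You correctly flag these as finite checks, but they are precisely the content of the theorem; the unfolding relocates the work rather than bypassing it.
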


Now we turn to band graphs. Given a band graph $\widetilde{\mathcal{G}}_{\xi,T}$ let $\mathcal{G}_{\xi,T}$  be the snake graph on tiles $G_1,\ldots,G_d$ resulting from cutting the graph along an internal edge $e$. We refer to $e$ as the \emph{cut edge}. Let $x$ and $y$ be the endpoints of $e$ in $G_{1}$ and let $x'$ and $y'$ be the endpoints of $e$ in $G_{d}$ such that, in $\widetilde{\mathcal{G}}_{\xi,T}$, $x$ and $x'$ are identified and $y$ and $y'$ are identified. 

\begin{definition}\label{def:GoodMatching}
A perfect matching $P$ of a band graph $\widetilde{\mathcal{G}_{\xi,T}}$ is a \emph{good matching} if either the cut edge is in $P$ or if the edge of $P$ incident to $x = x'$ and the edge of $P$ incident to $y = y'$ are on the same side of the cut edge. 
\end{definition}

Good matchings of the band graph $\widetilde{\mathcal{G}}_{\xi,T}$ can be identified with the set of perfect matchings of $\mathcal{G}_{\xi,T}$ which include $(x,y)$ or $(x',y')$. In particular, when we form a band graph from a closed curve, the minimal and maximal matchings of the underlying snake graph $\mathcal{G}_{\xi,T}$ will give good matchings of $\widetilde{\mathcal{G}}_{\xi,T}$, so we can refer to the corresponding good matchings also as minimal and maximal. 

The weight monomial for a good matching is defined the same as for a perfect matching. The height monomial for a good matching is defined as the height monomial for the corresponding perfect matching of the underlying snake graph. With these definitions, we can define a generalized cluster algebra element $x_{\xi}$ associated to the closed curve $\xi$ by applying the expansion formula to the band graph $\widetilde{\mathcal{G}}_{\xi,T}$. This element will not be a cluster variable since we will never reach a closed curve by a sequence of flips starting from a triangulation. Similarly, we can apply the snake graph expansion formula to arcs which have self-intersection but these expressions will also not be cluster variables.

\subsection{Lattice of Perfect Matchings}\label{sec:LatticePMs}

The vectors $\mathbf{h}(P)$ associated to each perfect matching $P$ allow us to give a partial order to the set of perfect matchings. In the following, we will consider that each tile of a snake graph has a different label, so that $\mathbf{h}(P) = (h_1,\ldots,h_d)$ where $h_i = 1$ only if $G_i \in P_- \ominus P$. One can recover the usual height vector by combining the contributions from tiles which are associated to the same arc in $T$.

Given two distinct perfect matchings $P_1$ and $P_2$, we say $P_1 < P_2$ if $\mathbf{h}(P_1) < \mathbf{h}(P_2)$ in the usual partial order on $\mathbb{Z}^d$. To be precise, if $\mathbf{h}(P_1) = (h_1,\ldots,h_n)$ and $\mathbf{h}(P_2) = (h_1',\ldots,h_n')$, then $P_1 < P_2$ if $h_i \leq h_i'$ for all $i$, where at least one inequality is strict. Note that the minimal matching, $P_-$ has $\mathbf{h}(P_-) = (0,\ldots,0)$ and the maximal matching, $P_+$, has $\mathbf{h}(P_+) = (1,\ldots,1)$ so that these are minimal and maximal elements respectively in this poset.

 If $P_1 
\lessdot P_2$ so that the $\mathbf{h}(P_1)$ and $\mathbf{h}(P_2)$ only differ by 1 in one spot, then we know that $P_1$ and $P_2$ are related by a \emph{twist} at a tile. A twist is the result of taking a pair of edges in a perfect matching which border the same tile and replacing them with the other pair of edges along that tile. If it is possible to twist tile $G_i$ in a perfect matching $P$, then we will say $G_i$ is \emph{twistable}.

\begin{center}
\begin{tikzpicture}
\draw[thick] (0,0) -- (1,0) -- (1,1) -- (0,1) -- (0,0);
 \draw [line width=0.75mm] (0,0) -- (1,0);
\draw[line width = 0.75mm] (0,1) -- (1,1);
\draw[gray,dashed] (1,0) -- (0,1);
\node[] at (1.5,0.5){$\leftrightarrow$};
\draw[thick] (2,0) -- (3,0) -- (3,1) -- (2,1) -- (2,0);
\draw[gray,dashed] (3,0) -- (2,1);
 \draw [line width=0.75mm] (2,0) -- (2,1);
\draw[line width = 0.75mm] (3,0) -- (3,1);
\end{tikzpicture}
\end{center}

Musiker, Schiffler, and Williams show that this partial order is in fact a distributive lattice. In the following, we say three consecutive tiles in a snake graph, $G_{i-1}, G_i, G_{i+1}$ form a zig-zag if they share a common vertex and otherwise they form a straight line. For example, in the snake graph in Example \ref{ex:SnakeAndPoset}, tiles $G_1,G_2,G_3$ are in a straight line while tiles $G_2,G_3,G_4$ are in a zig-zag. Given a snake graph $\mathcal{G}$ on tiles $G_1,\ldots,G_d$, we define a partial order $\mathcal{P}_{\mathcal{G}}$ on $[d]$ with the following cover relations. If $G_2$ is glued on the north edge of $G_1$, we set $1 \lessdot 2$; otherwise, we set $1 \gtrdot 2$. Now, let $i \geq 2$. Suppose first that the three tiles $G_{i-1},G_i,G_{i+1}$ form a zig-zag shape. Then, if $i-1 \lessdot i$ we set $i \lessdot i+1$  and if $i-1 \gtrdot i$ we set $i \gtrdot i+1$.  Now suppose $G_{i-1},G_i,G_{i+1}$ lie in a straight line. Then,  if $i-1 \lessdot i$ ($i-1 \gtrdot i$), we set $i \gtrdot i+1$ ($i \lessdot i+1$). 

\begin{theorem}[Theorem 5.4 \cite{MSW-bases}]\label{Thm:PMPoset}
The poset on the set of perfect matchings of $\mathcal{G}_{\gamma,T}$ given by setting $P_1 < P_2$ whenever $\mathbf{h}(P_1) < \mathbf{h}(P_2)$ is a distributive lattice. Moreover, this lattice isomorphic to the lattice of order ideals of $\mathcal{P}_{\mathcal{G}_{\gamma,T}}$. 
\end{theorem}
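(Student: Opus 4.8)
The plan is to establish the two claims — that the poset on perfect matchings is a distributive lattice, and that it is isomorphic to the lattice of order ideals of $\mathcal{P}_{\mathcal{G}_{\gamma,T}}$ — by exhibiting an explicit order-preserving bijection between perfect matchings and order ideals of $\mathcal{P}_{\mathcal{G}_{\gamma,T}}$, and then invoking the fact that order ideals of any poset form a distributive lattice (so the first claim follows from the second). This is cleaner than verifying the lattice axioms directly on matchings.

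First I would set up the bijection. Given a perfect matching $P$, by Lemma \ref{lem:SymmDiff} the symmetric difference $P \ominus P_-$ is the boundary of a subgraph $\mathcal{G}_P = \cup_{i \in J} G_i$; send $P$ to the set $J \subseteq [d]$. The key structural step is to show that $J$ is always an order ideal of $\mathcal{P}_{\mathcal{G}_{\gamma,T}}$, and conversely that every order ideal arises this way from a unique matching. For this I would argue by induction on $d$ (the number of tiles), peeling off the last tile $G_d$: a matching of $\mathcal{G}_{\gamma,T}$ restricts to a matching of the sub-snake-graph on $G_1,\ldots,G_{d-1}$ after accounting for how the edge shared by $G_{d-1}$ and $G_d$ is used. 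The local analysis of whether $G_d$ can belong to $\mathcal{G}_P$, given the status of $G_{d-1}$, is exactly what dictates the cover relation between $d-1$ and $d$ in $\mathcal{P}_{\mathcal{G}_{\gamma,T}}$ — the zig-zag versus straight-line cases in the definition of $\mathcal{P}_{\mathcal{G}_{\gamma,T}}$ correspond precisely to whether ``$G_{d-1} \in \mathcal{G}_P$'' forces, forbids, or is independent of ``$G_d \in \mathcal{G}_P$.'' I would extract this from the behavior of twists: recall that covering relations $P_1 \lessdot P_2$ in the matching poset are realized by a single twist at some tile $G_i$, which toggles membership of $G_i$ in $\mathcal{G}_P$ and is permissible exactly when the neighboring tiles are in the configuration encoded by $\mathcal{P}_{\mathcal{G}_{\gamma,T}}$.

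Next I would check that the bijection $P \mapsto J$ is an order isomorphism. Since $h_i(P)$ is $1$ iff $G_i \in \mathcal{G}_P$ iff $i \in J$, the height vector of $P$ is literally the indicator vector of $J$, so $\mathbf{h}(P_1) \le \mathbf{h}(P_2)$ in the componentwise order on $\mathbb{Z}^d$ if and only if $J_1 \subseteq J_2$ as subsets, i.e.\ $J_1 \subseteq J_2$ as order ideals. Hence the map is an isomorphism of posets onto $(\mathcal{J}(\mathcal{P}_{\mathcal{G}_{\gamma,T}}), \subseteq)$. Finally, order ideals of any finite poset, ordered by inclusion, form a distributive lattice (meet and join are intersection and union, which are again order ideals), so the matching poset is a distributive lattice isomorphic to $\mathcal{J}(\mathcal{P}_{\mathcal{G}_{\gamma,T}})$, as claimed.

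The main obstacle I anticipate is the inductive step identifying $J$ with an order ideal — specifically, verifying in each of the straight-line and zig-zag cases that the combinatorics of extending a matching of the first $d-1$ tiles across the shared edge into $G_d$ matches the prescribed cover relation, and handling the base case of the first tile (the convention $\textrm{rel}(G_1) = +1$ and whether $G_2$ is glued on the north or east edge of $G_1$) correctly. Care is also needed because in this theorem the tiles are taken to have distinct labels (as noted before the statement), so there is no collapsing of height-vector coordinates to worry about; one only needs the plain snake-graph version of the height vector. Since this is a direct citation of \cite{MSW-bases}, I would likely give this argument in abbreviated form, emphasizing that the orbifold snake graphs $\mathcal{G}_{\gamma,T}$ are honest snake graphs in the sense of \cite{MSW-bases} once the pending-arc tiles and puzzle pieces of Table \ref{table:SnakeGraphPuzzlePiece} are assembled, so that the cited result applies verbatim.
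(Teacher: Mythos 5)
The paper does not prove this statement at all --- it is a direct citation of Theorem 5.4 of \cite{MSW-bases} --- so there is no in-paper argument to compare against. Your proposal is a sound reconstruction of the cited proof: send each matching $P$ to the index set $J$ of tiles in the subgraph bounded by $P \ominus P_-$ (Lemma \ref{lem:SymmDiff}), observe that under the distinct-tile-label convention stated just before the theorem $\mathbf{h}(P)$ is literally the indicator vector of $J$, establish that $J \leftrightarrow P$ is a bijection onto order ideals of $\mathcal{P}_{\mathcal{G}_{\gamma,T}}$, and conclude via Birkhoff's theorem. Your closing observation is the one thing the paper leaves implicit and is the real content of applying the citation here: the orbifold snake graphs $\mathcal{G}_{\gamma,T}$ are, once the pending-arc puzzle pieces of Table \ref{table:SnakeGraphPuzzlePiece} are assembled, abstract snake graphs in the sense of \cite{MSW-bases} (a sequence of square tiles glued along north or east edges), so the combinatorial lattice result transfers verbatim.

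One small caution for the write-up: you invoke the description of cover relations as single twists in order to prove that $J$ is an order ideal, but that description is most cleanly derived \emph{from} Lemma \ref{lem:SymmDiff} (if $\mathbf{h}(P_2) - \mathbf{h}(P_1) = \mathbf{e}_i$, the symmetric difference $P_1 \ominus P_2$ must be the boundary of $G_i$ alone, i.e.\ a twist at $G_i$), rather than assumed independently. The argument is not circular, but you should order these two facts carefully so that the ``covers are twists'' statement is established from the symmetric-difference structure before it is used to match the twist constraints against the cover relations of $\mathcal{P}_{\mathcal{G}_{\gamma,T}}$.
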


Now, consider a band graph $\widetilde{\mathcal{G}}$. Cut this band graph along one internal edge to make a snake graph $\mathcal{G}$ on tiles $G_1,\ldots,G_d$, and let $\mathcal{P}_{\mathcal{G}}$ be the resulting quiver on $[d]$. The minimal matching, $P_-$ of $\mathcal{G}$ will use exactly one of the cut edges. If it uses the cut edge on $G_1$, we form $\mathcal{P}_{\widetilde{\mathcal{G}}}$ by adding the arrow $1 \to d$ to $\mathcal{P}_{\mathcal{G}}$. Otherwise, $P_-$ uses the cut edge on $G_d$ and $\mathcal{P}_{\widetilde{\mathcal{G}}}$ is the result of adding the arrow $d \to 1$ to $\mathcal{P}_{\mathcal{G}}$. One can check that the choice of cut edge does not affect the quiver $\mathcal{P}_{\widetilde{\mathcal{G}}}$.

\begin{theorem}[Theorem 5.7 \cite{MSW-bases}] \label{thm:GMPoset}
The poset on the set of good matchings of $\widetilde{\mathcal{G}}_{\xi,T}$ given by setting $P_1 < P_2$ whenever $\mathbf{h}(P_1) < \mathbf{h}(P_2)$ is a distributive lattice. Moreover, this lattice isomorphic to the lattice of order ideals of $\mathcal{P}_{\widetilde{\mathcal{G}}}$. 
\end{theorem}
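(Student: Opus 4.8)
The plan is to deduce the statement from the snake-graph case, Theorem~\ref{Thm:PMPoset}, by realizing the poset of good matchings as the lattice of order ideals of an explicitly modified quiver. Write $J(\mathcal Q)$ for the lattice of order ideals of a finite poset $\mathcal Q$ ordered by inclusion. Fix the internal edge $e$ along which $\widetilde{\mathcal G}_{\xi,T}$ is cut to form the snake graph $\mathcal G := \mathcal G_{\xi,T}$ on tiles $G_1,\dots,G_d$, write $e_1$ and $e_d$ for the copies of $e$ on $G_1$ and $G_d$, and let $\mathcal P_{\mathcal G}$ be the associated quiver on $[d]$. By Theorem~\ref{Thm:PMPoset} the assignment $P \mapsto I(P) := \{\,i \in [d] : G_i \in P_- \ominus P\,\}$ is an isomorphism from the poset of perfect matchings of $\mathcal G$, ordered by $\mathbf h(P) \le \mathbf h(Q)$, onto $J(\mathcal P_{\mathcal G})$, and $i \in I(P) \iff h_i(P) = 1$, so the height order on matchings is transported to the inclusion order on order ideals. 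First I would invoke the identification recorded after Definition~\ref{def:GoodMatching}: cutting and regluing matches the good matchings of $\widetilde{\mathcal G}_{\xi,T}$ with exactly those perfect matchings $P$ of $\mathcal G$ satisfying $e_1 \in P$ or $e_d \in P$, and this identification leaves height vectors unchanged. Hence $P \mapsto I(P)$ carries the poset of good matchings isomorphically onto a sub-poset $\mathcal I \subseteq J(\mathcal P_{\mathcal G})$, ordered by inclusion, and it remains to prove $\mathcal I = J(\mathcal P_{\widetilde{\mathcal G}})$.

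The heart of the argument is a local analysis at the extremal tiles $G_1$ and $G_d$. Each of $e_1, e_d$ is a boundary edge of $\mathcal G$ incident to a single tile, so Lemma~\ref{lem:SymmDiff} gives $e_1 \in P \ominus P_- \iff G_1 \in P_- \ominus P \iff 1 \in I(P)$, and likewise $e_d \in P \ominus P_- \iff d \in I(P)$. Since $P_-$ uses exactly one of $e_1, e_d$ (Section~\ref{sec:LatticePMs}), in the case where $P_-$ uses $e_1$ this yields $e_1 \in P \iff 1 \notin I(P)$ and $e_d \in P \iff d \in I(P)$; the case where $P_-$ uses $e_d$ is symmetric under reversing the orientation of $\xi$, which interchanges $G_1, G_d$ and reverses the adjoined arrow. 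Consequently, in the first case,
\[
\mathcal I = \{\, I \in J(\mathcal P_{\mathcal G}) : 1 \notin I \ \text{or}\ d \in I \,\}.
\]
Next I would compare this with the recipe defining $\mathcal P_{\widetilde{\mathcal G}}$ — adjoining the arrow $1 \to d$ to $\mathcal P_{\mathcal G}$ — which is set up precisely so that its order ideals are exactly the order ideals of $\mathcal P_{\mathcal G}$ subject to this constraint; thus $\mathcal I = J(\mathcal P_{\widetilde{\mathcal G}})$. Finally, a direct check shows $\mathcal I$ is closed under the intersection and union operations of $J(\mathcal P_{\mathcal G})$, so the inclusion order on $\mathcal I$ genuinely is the distributive lattice $J(\mathcal P_{\widetilde{\mathcal G}})$, with minimum $\emptyset \leftrightarrow P_-$ and maximum $[d] \leftrightarrow P_+$. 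Assembling these steps gives the desired isomorphism, and the independence of the choice of cut edge reduces to the corresponding statement for $\mathcal P_{\widetilde{\mathcal G}}$ quoted in Section~\ref{sec:LatticePMs}.

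The step I expect to be the main obstacle is the local identification $e_1 \in P \iff 1 \notin I(P)$ together with its analogue at $G_d$: though short, it demands care in locating the cut edge among the boundary edges of the extremal tiles, in reconciling this with the subgraph bounded by $P \ominus P_-$ supplied by Lemma~\ref{lem:SymmDiff}, and in verifying that both admissible gluings at $G_1$ (on its north or its east edge) and both possibilities for which cut edge $P_-$ uses are handled. Everything else is bookkeeping, an appeal to Theorem~\ref{Thm:PMPoset}, or the standard fact that the order ideals of a finite poset form a distributive lattice.
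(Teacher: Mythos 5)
The paper does not actually contain its own proof of this statement: Theorem~\ref{thm:GMPoset} is cited directly from \cite{MSW-bases} as Theorem~5.7, so there is no in-paper argument to compare against. Judged on its own, your proof is correct and is the natural deduction from the snake-graph case.

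The chain of reductions you use is sound. The identification of good matchings of $\widetilde{\mathcal G}_{\xi,T}$ with perfect matchings of $\mathcal G = \mathcal G_{\xi,T}$ containing $e_1$ or $e_d$, and the preservation of height vectors, are both stated explicitly in the discussion following Definition~\ref{def:GoodMatching}. The local analysis at the extremal tiles is right: since $e_1$ is a boundary edge of $\mathcal G$ incident only to $G_1$, Lemma~\ref{lem:SymmDiff} and Definition~\ref{def:HeightMonomial} indeed give $e_1 \in P \ominus P_- \iff G_1 \subseteq \mathcal G_P \iff 1 \in I(P)$, and likewise for $e_d$, $d$. Combined with the fact recorded in Section~\ref{sec:LatticePMs} that $P_-$ uses exactly one of $e_1, e_d$, this yields $e_1 \in P \iff 1 \notin I(P)$ and $e_d \in P \iff d \in I(P)$ when $P_- \ni e_1$, whence $\mathcal I = \{I \in J(\mathcal P_{\mathcal G}) : 1 \in I \Rightarrow d \in I\}$. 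The closing step — that adjoining the arrow $1 \to d$ to $\mathcal P_{\mathcal G}$ produces a poset whose order ideals are exactly the order ideals of $\mathcal P_{\mathcal G}$ satisfying this implication — is a routine transitive-closure check, which you gesture at but should perhaps make explicit: any new comparability in $\mathcal P_{\widetilde{\mathcal G}}$ factors through a chain whose only non-$\mathcal P_{\mathcal G}$ step is $1 \gtrdot d$, and downward closure across that step is precisely your condition. One small caution: the convention that $a \to b$ in $\mathcal P$ encodes $a > b$ is what makes your identity come out right (it is what the Hasse diagram in Example~\ref{ex:SnakeAndPoset} uses, even though the prose of that example appears to have a typo in the direction of the added arrow). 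Finally, your last remark about independence of the cut edge is slightly circular as phrased — what Section~\ref{sec:LatticePMs} asserts is that the quiver $\mathcal P_{\widetilde{\mathcal G}}$ itself is independent of the cut; that is what you should be citing, and it immediately yields independence of the lattice.
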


\begin{example}\label{ex:SnakeAndPoset}
First, we give a snake graph $\mathcal{G}$ and the Hasse diagram of the corresponding poset $\mathcal{P}_{\mathcal{G}}$.
\begin{center}
\begin{tikzpicture}
\draw[thick] (0,0) -- (3,0) -- (3,2) -- (2,2) -- (2,1) -- (0,1) -- (0,0);
\draw[thick] (1,0) -- (1,1);
\draw[thick] (2,0) to (2,1) to (3,1);
\draw[gray,dashed] (0,1) to node[midway,right]{$1$} (1,0);
\draw[gray,dashed] (2,0) to node[midway,right]{$2$} (1,1);
\draw[gray,dashed] (3,0) to node[midway,right]{$3$} (2,1);
\draw[gray,dashed] (3,1) to node[midway,right]{$4$} (2,2);
\node(1) at (4,1){$1$};
\node(2) at (5,2){$2$};
\node(3) at (6,1){$3$};
\node(4) at (7,0){$4$};
\draw[->] (2) to (1);
\draw[->] (2) to (3);
\draw[->] (3) to (4);
\end{tikzpicture}    
\end{center}

Then, we give the lattice of perfect matchings of $\mathcal{G}$, where we represent a perfect matching $P$ with its height vector $\mathbf{h}(P)$. The bijection between height vectors and order ideals of $\mathcal{P}_{\mathcal{G}}$ is clear.

\begin{center}
\begin{tikzpicture}
\node(0000) at (0,0){$(0,0,0,0)$};
\node(1000) at (-1,1){$(1,0,0,0)$};
\node(0001) at (1,1){$(0,0,0,1)$};
\node(1001) at (0,2){$(1,0,0,1)$};
\node(0011) at (2,2){$(0,0,1,1)$};
\node(1011) at (1,3){$(1,0,1,1)$};
\node(1111) at (1,4){$(1,1,1,1)$};
\draw[] (0000) to (1000);
\draw[] (0000) to (0001);
\draw[] (1000) to (1001);
\draw[] (0001) to (1001);
\draw[] (0001) to (0011);
\draw[] (0011) to (1011);
\draw[] (1001) to (1011);
\draw[] (1011) to (1111);
\end{tikzpicture}    
\end{center}

We could glue the south edge of $G_1$ to the north edge of $G_4$ to form a band graph $\widetilde{\mathcal{G}}$. Then the lattice of height vectors of good matchings would be the same as above except with the vector $(1,0,0,0)$ removed. Notice that $P_-$ uses the south edge of $G_1$ by convention. Therefore, the resulting quiver $\mathcal{P}_{\widetilde{\mathcal{G}}}$  would add the edge $4 \to 1$. Accordingly, we can see that the set $\{1\}$ would no longer be an order ideal as 1 would no longer be minimal. \end{example}

Musiker, Schiffler, and Williams also prove the following result, called the ``twist-parity condition'', which describes how the parity of the index of a tile affects what type of twist moves up the lattice of perfect matchings. This will be useful in some proofs in Section \ref{sec:CCMap}.

\begin{lemma}[\cite{MSW-bases}]\label{lem:twistparity}
Let $\mathcal{G}$ be a snake graph, and let $L(\mathcal{G})$ be its lattice of perfect matchings. If $i$ is odd (even), then a twist on $G_i$ which replaces horizontal edges with vertical (vertical edges with horizontal) moves up in $L(\mathcal{G})$. 
\end{lemma}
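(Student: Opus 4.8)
The plan is to prove the twist-parity condition by tracking how a twist at a tile $G_i$ changes the height vector $\mathbf{h}(P)$ and comparing this with the combinatorial description of the lattice. Recall from Lemma \ref{lem:SymmDiff} that for any perfect matching $P$, the symmetric difference $P \ominus P_-$ is the boundary of a subgraph $\mathcal{G}_P = \cup_{j\in J}G_j$, and $\mathbf{h}(P)$ records which tiles lie in $\mathcal{G}_P$. A twist at $G_i$ in $P$ produces a matching $P'$ whose enclosed subgraph differs from that of $P$ by exactly the tile $G_i$: either $G_i$ is added (so $h_i$ increases by $1$) or removed (so $h_i$ decreases by $1$). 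Thus a twist at $G_i$ moves \emph{up} the lattice precisely when it adds $G_i$ to the enclosed subgraph $\mathcal{G}_P$. So the statement reduces to: adding $G_i$ to $\mathcal{G}_P$ forces horizontal edges of $P$ along $G_i$ to be replaced by vertical ones when $i$ is odd, and vertical by horizontal when $i$ is even.

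First I would set up coordinates and conventions carefully. By the construction in Section \ref{subsec:SnakeGraphsfromOrbifold}, we fix $\mathrm{rel}(G_1) = +1$, we glue $G_{i+1}$ onto the north or east edge of $G_i$, and the minimal matching $P_-$ uses the southern edge of the first tile. The key observation is that along the ``staircase'' of tiles, the parity of the index $i$ controls whether the tile $G_i$ sits so that its south/west edges versus its north/east edges play the role of the $P_-$-edges; one proves by induction on $i$ that in the minimal matching $P_-$ the edges bordering $G_i$ that belong to $P_-$ are the two boundary edges meeting at the ``$P_-$-corner'' of $G_i$, and that this corner alternates (south-west type for odd $i$, north-east type for even $i$, or vice versa depending on the initial convention) as $i$ increases. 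This is exactly the same bookkeeping that underlies the description of $\mathcal{P}_{\mathcal{G}}$ via zig-zags and straight lines given just before Theorem \ref{Thm:PMPoset}.

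The next step is to translate ``$G_i \in \mathcal{G}_P$'' into a local statement about the edges of $P$ on $G_i$. Since $P \ominus P_-$ is the \emph{boundary} of $\mathcal{G}_P$, a tile $G_i$ lies in $\mathcal{G}_P$ iff the cycle $P \ominus P_-$ winds around $G_i$, which in turn forces $P$ restricted to the four edges of $G_i$ to be the ``complementary'' pair to $P_-$ restricted to those edges (locally, twisting toggles membership). Combining this with the parity computation of which pair $P_-$ uses on $G_i$: for odd $i$, $P_-$ uses a pair of edges that are, say, the two horizontal edges of $G_i$ (in the chosen planar embedding), so $P$ must use the two vertical edges — hence the twist that moves up replaces horizontal by vertical; for even $i$ the roles are swapped. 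One must check this is consistent across the north-gluing versus east-gluing cases and across zig-zag versus straight configurations, but in each case the alternation of the embedding with parity is exactly what makes the statement uniform.

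The main obstacle I anticipate is pinning down the planar-embedding conventions precisely enough that ``horizontal'' and ``vertical'' are unambiguous for every tile, including the tiles coming from the orbifold puzzle pieces in Table \ref{table:SnakeGraphPuzzlePiece} (where several tiles may carry the same label and the staircase can locally look unusual). In the surface case this is handled in \cite{MSW-bases}; since the lemma is quoted from there, the cleanest route is to observe that the underlying snake graph $\mathcal{G}_{\gamma,T}$ (as an abstract snake graph with a choice of planar embedding) is of exactly the type treated in \cite{MSW-bases} — the orbifold construction only affects the edge \emph{labels}, not the shape of the graph or its embedding — so Lemma \ref{lem:twistparity} applies verbatim. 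Thus the proof is essentially: reduce to the abstract snake graph, then invoke the surface result; the parity bookkeeping above is the content that makes that reduction legitimate.
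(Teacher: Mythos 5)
The paper itself gives no proof of this lemma: it is stated as a quotation from \cite{MSW-bases}, and it is a purely combinatorial fact about abstract snake graphs and their lattices of matchings, independent of the orbifold setting (the tile labels play no role). Your final paragraph correctly identifies that invoking \cite{MSW-bases} is the right route, and for the right reason — the orbifold construction changes only the edge weights, not the shape of the snake graph — so your conclusion matches the paper's treatment.

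The sketch you give in between, however, contains a concrete error. You assert that in $P_-$ ``the edges bordering $G_i$ that belong to $P_-$ are the two boundary edges meeting at the $P_-$-corner of $G_i$,'' i.e.\ that $P_-$ always contributes a full corner pair on every tile. This is false: for a middle tile of a zig-zag (take $\mathcal{G} = (G_1,G_2,G_3)$ with $G_2$ glued north of $G_1$ and $G_3$ glued east of $G_2$; then $P_- = \{S(G_1), W(G_2), S(G_3), N(G_3)\}$), the tile $G_2$ receives only the single edge $W(G_2)$ from $P_-$, not a pair. Relatedly, the claim that $G_i \in \mathcal{G}_P$ ``forces $P$ restricted to the four edges of $G_i$ to be the complementary pair to $P_-$'' is also not correct: by Lemma \ref{lem:SymmDiff}, $P \ominus P_-$ is only the \emph{boundary} of $\mathcal{G}_P$, so a tile strictly interior to $\mathcal{G}_P$ has $P$ agreeing with $P_-$ on all of its edges — such a tile is simply not twistable. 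The correct local statement is about twistable tiles only (those at a corner of $\mathcal{G}_P$ or of its complement), and the parity bookkeeping in \cite{MSW-bases} is run via an orientation or bipartite coloring of the snake graph, not via a fixed corner pair of $P_-$ on each tile. Taken literally, the intermediate sketch would not close into a proof; since you fall back on the citation, the end result agrees with the paper, but the sketch as written should not be read as a substitute for the argument in \cite{MSW-bases}.
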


The same statement is true in a band graph associated to an arc $\widetilde{\mathcal{G}}_{\xi,T}$ where we replace the role of odd and even indexed tiles with tiles with orientation $+1$ and $-1$ respectively.

\section{Gentle Algebras}\label{sec:Strings}

In this section, we begin by reviewing concepts and notations associated gentle algebras and their representations. We direct the interested reader to \cite{schiffler2014quiver} or \cite{assem2006elements} for many more details.

Let $K$ be an algebraically closed field and $Q=(Q_0, Q_1)$ be a finite quiver, not necessarily a cluster quiver. 
A \emph{path} $w$ of length $m$ in $Q$ is a sequence of arrows $\alpha_1\cdots \alpha_m$ such that $t(\alpha_k) = s(\alpha_{k+1})$ for each $k=1, \dots, m-1$, and we denote by $s(w)=s(\alpha_1)$ and by $t(w)=t(\alpha_n)$. For each vertex $v\in Q_0$, we denote by $e_v$ the \emph{trivial path} of length 0.

The \emph{path algebra} $KQ$ is defined as the $K$-vector space with basis the set of all paths in $Q$, and the multiplication is induced by the concatenation of paths. Denote by $R$ the ideal generated by the arrows of $Q$. An ideal $I$ of $KQ$ is called \emph{admissible} if there is an natural number $m\geq 2 $ such that  $R^m\subseteq I\subseteq ^2$. The pair $(Q,I)$ is called \emph{bounded quiver}.

It is shown in \cite{assem2006elements} that module category of $KQ/I$ is equivalent to category of \emph{bound quiver representations} $(M_i, \phi_\alpha)_{i\in Q_0, \alpha\in Q_1}$ of the bound quiver $(Q,I)$, where $M_i$ is a $K$-vector space and $\phi_{\alpha}:M_i\to M_j$ a $K$ linear map for every arrow $\alpha:i\to j$. This latter category is denoted $\text{rep}(Q,I)$. Given any path $w =\alpha_1 \dots \alpha_m$  in $Q$ we have the induced map $\phi_w =\phi_{\alpha_m}\dots \phi_{\alpha_1}$. This definition is extended by linearity to any linear combination of paths $\rho=\sum_{i=1}^{k}\lambda_iw_i$; in this case, the induced map is denoted by $\phi_{\rho}$. A pair $(M_i, \phi_\alpha)_{i\in Q_0, \alpha\in Q_1}$ is a representation of the bound quiver $(Q, I)$ if $\phi_{\rho} = 0$ for all $\rho\in I$.

In this work, the class of gentle algebras, whose representation theory is particularly well understood,   plays an important role. In particular, in Lemma \ref{lem:QuiverIsGentle}, we will show that the algebras we will consider will always be \emph{gentle algebras}. In the following, we review some basic definition of gentle algebras and their representations. 

\begin{definition}\label{def:GentleAlgebra}
An algebra $A$ is \emph{gentle} if it is morita equivalent to a finite dimensional algebra $\Lambda=KQ/I$ such that the following conditions hold.
\begin{enumerate}
    \item Each $v \in Q_0$ has at most two incoming and at most two outgoing arrows.
    \item For each $\beta \in Q_1$, there is at most one arrow $\alpha_1 \in Q_1$ such that $\alpha_1\beta$ is a path and $\alpha_1\beta \notin I$ and at most one arrow $\alpha_2 \in Q_1$ such that $\alpha_2\beta$ is a path and $\alpha_2\beta \in I$.
    \item For each $\beta \in Q_1$, there is at most one arrow $\gamma_1 \in Q_1$ such that $\beta \gamma_1$ is a path and $\beta \gamma_1 \notin I$ and at most one arrow $\gamma_2 \in Q_1$ such that $\beta \gamma_2$ is a path and $\beta\gamma_2 \in I$.
    \item The ideal $I$ is generated by paths of length two.
\end{enumerate}
\end{definition}

If a quiver $Q$ and an ideal $I$ of the path algebra $KQ$ satisfy the above definition, we will also say that $(Q,I)$ is a \emph{gentle pair}. 

\subsection{Strings and Bands}

In this section, we review the definition of \emph{strings} and \emph{bands} that describe combinatorially the representations of gentle algebras.

For every $\alpha \in Q_1$, we define a formal inverse $\alpha^{-1}$ where $s(\alpha) = t(\alpha^{-1})$ and $t(\alpha) = s(\alpha^{-1})$. Given a path $w = \alpha_1 \cdots \alpha_m$, we define $w^{-1}$ as $\alpha_m^{-1}\cdots \alpha_1^{-1}$. Denote by $Q_1^{-1}$ the set of formal inverse arrows of $Q$. A walk $w$ of length $m$ is a word or sequence $\alpha_1 \dots \alpha_m$ of elements of $Q_1\cup Q_1^{-1}$ such that $t(\alpha_i)=s(\alpha_{i+1})$  and $\alpha_i$ is not the formal inverse of $ \alpha_{i+1}$ for every $i=1, \dots m-1$. Given a walk $w=\alpha_1 \dots \alpha_m$, we denote by $s(w)=s(\alpha_1)$ and $t(w)=t(\alpha_m)$. We say that a walk is trivial if it is of length 0. By abuse of notation, we write $(\alpha^{-1})^{-1}=\alpha$ for every arrow $\alpha$ of $Q_1$.

\begin{definition}
Let $(Q,I)$ be a gentle pair.
A \emph{string} in $(Q,I)$ is a walk $w = \alpha_1\dots \alpha_m$ such that there is no subword of $w$ which is a relation of $I$ or subword whose inverse is a relation in $I$. 
A string $w=\alpha_1\dots\alpha_m$ is \emph{direct} if each $\alpha_i$ is an arrow of $Q_1$. 
A string $w=\alpha_1\dots\alpha_m$ is \emph{inverse} if each $\alpha_i$ is an arrow of $Q_1^{-1}$.

We also have a trivial string $e_v$ for each $v \in Q_0$.

A non trivial walk $\overline{w}$ is a \emph{band} if any power of $w$ is a sting and $w$ is not the power of any string of a smaller length.
\end{definition}

Given a string $w = \alpha_1 \cdots \alpha_m$, for any $i \in [m]$, let $w(i) = s(\alpha_i)$ and set $w(m+1) = t(\alpha_m)$. We call $w(i) \in Q_0$ a \emph{vertex of $w$}. We also use this language for bands. The same vertex in $Q_0$ can appear multiple times as a vertex of $w$.

Given a string $w=\alpha_1\dots \alpha_m$, the string module $M(w)$ is obtained by
replacing every vertex of $w$ by a copy of the field $K$, every direct arrow by the identity map, and for inverse arrows the identity map in the opposite direction.

In a similar way, we can define modules from bands, but in this case, any band $\overline{w}=\alpha_1\dots \alpha_m$ induces a class of modules, called \emph{band modules}, $M(\overline{w}, n, \lambda)$, where $n$ is a natural number and $\lambda\in K$ a scalar different from zero. The band module $M(\overline{w}, n, \lambda)$ is obtained by $\overline{w}$ by replacing each vertex $i=1, \dots, m-1$ by a copy of the $n$-dimensional vector space $K^n$, every direct arrow $\alpha_i$  with $i=1, \dots, m-1$ by the identity matrix of dimension $n$ and every inverse arrow by the identity matrix of dimension $n$ in the opposite direction, and $\alpha_m$ is replaced by the Jordan block of dimension $n$ and eigenvalue $\lambda$. Later, we will only consider the case where $n = 1$, so the Jordan block will just be multiplication by $\lambda$.

The following Theorem show that the set of strings and bands of a gentle pair $(Q,I)$ encode the set of indecomposable $KQ/I$ modules.

\begin{theorem}[\cite{butler1987auslander}]\label{thm:BRString}
Let $\Lambda = KQ/I$ be a gentle algebra. Then the set strings modules and band modules form a complete list of indecomposable $\Lambda$-modules. Moreover,  
\begin{itemize}
    \item string modules $M(w)$ and $M(w')$ are isomorphic if and only if $w' = w^{\pm 1}$,  and
    \item band modules $M(\overline{w},\lambda,n)$ and $M(\overline{w}',\lambda',n')$ are isomorphic if and only if $\lambda=\lambda', n = n',$ and $\overline{w}$ and $\overline{w}'$ are equivalent up to cyclic permutation. 
\end{itemize}
\end{theorem}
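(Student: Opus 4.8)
The statement to prove is Theorem \ref{thm:BRString}, which is attributed to Butler–Ringel \cite{butler1987auslander}. Since this is a cited classical result, the ``proof'' here is really a proof sketch recalling the structure of the Butler–Ringel argument.

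The plan is to recall the Butler–Ringel functorial filtration machinery, adapted to the gentle (in fact, special biserial) setting. First I would recall that a gentle algebra $\Lambda = KQ/I$ is special biserial, and that for such algebras every indecomposable module is either a string module, a band module, or a projective-injective module carrying a ``bigger'' uniserial piece; in the gentle case one checks directly from Definition \ref{def:GentleAlgebra} that the projective-injective non-uniserial modules are themselves string modules, so the list reduces to string and band modules. The key input is the classification of finitely generated modules over the algebras $K[x]$ and $K[x]/(x^n)$ (equivalently, the Jordan normal form), which is where the scalar $\lambda \in K^*$ and the integer $n$ enter for band modules.

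Next I would set up the combinatorial bookkeeping: to each arrow $\alpha \in Q_1$ one associates the two ``edges'' $\alpha$ and $\alpha^{-1}$, and one defines, following Butler–Ringel, functors $M \mapsto M_w$ indexed by strings $w$ via iterated kernels and cokernels of the linear maps $\phi_\alpha$ and $\phi_\alpha^{-1}$ in a bound quiver representation $(M_i,\phi_\alpha) \in \mathrm{rep}(Q,I)$. The gentle conditions (2) and (3) in Definition \ref{def:GentleAlgebra} guarantee that at every vertex the incoming/outgoing arrows split cleanly according to whether their composite lies in $I$, so these functors are well defined and the filtration steps are unambiguous. One then shows: (i) each string module $M(w)$ and each band module $M(\overline w, n, \lambda)$ is indecomposable — for strings, by computing $\mathrm{End}$ and showing it is local (the only endomorphisms come from ``graph automorphisms'' of the string, which for a string are only scalars); for bands, $\mathrm{End}(M(\overline w,n,\lambda))$ is a local ring because the band is not a proper power, reducing to the locality of $\mathrm{End}$ of an indecomposable $K[x]/((x-\lambda)^n)$-module; (ii) the functorial filtration shows every indecomposable $\Lambda$-module is isomorphic to one on the list, by decomposing an arbitrary $M$ along the string/band functors and invoking Krull–Schmidt.

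For the isomorphism criteria: a morphism of string modules $M(w) \to M(w')$ is, by the structure of the maps (all identities or zero on the $1$-dimensional pieces), determined by a ``common substring'' overlap datum; an isomorphism forces $w$ and $w'$ to have the same underlying walk up to reversal, i.e. $w' = w^{\pm 1}$, and conversely $M(w) \cong M(w^{-1})$ canonically by reading the walk backwards. For band modules, cyclic permutation of $\overline w$ visibly yields an isomorphic representation (it just relabels which copy of $K^n$ is ``first''), reversal is already absorbed into the equivalence of bands, and the remaining data — the Jordan block — is a complete invariant by uniqueness of Jordan form, giving the stated $\lambda = \lambda'$, $n = n'$. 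The main obstacle, and the part that genuinely requires the Butler–Ringel apparatus rather than a short argument, is step (ii): proving \emph{completeness} of the list, i.e. that the string and band functors detect every indecomposable summand of an arbitrary module. The cleanest route is to cite \cite{butler1987auslander} directly after verifying that our $(Q,I)$ is a gentle pair (hence special biserial with the projective-injectives being string modules), so that their theorem applies verbatim; I would therefore present this as a short reduction to the cited result rather than reproving the functorial filtration.
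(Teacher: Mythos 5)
The paper offers no proof of this statement: it is cited verbatim to Butler--Ringel and used as a black box, which is exactly what you conclude is the cleanest route in your final paragraph. So in terms of what actually needs to appear in the paper, your proposal and the paper agree: verify that $(Q_T, I_T)$ is a gentle (hence special biserial, hence string) pair and invoke the classification.

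Your optional sketch of the Butler--Ringel argument is faithful in outline (functorial filtration, reduction to $K[x]$-module theory for bands, Krull--Schmidt for completeness), but two of the side remarks deserve a flag. First, the indecomposability argument for string modules via $\mathrm{End}(M(w))$ is slightly off as phrased: $\mathrm{End}(M(w))$ need not be $K$ --- a string with a repeated substring admits nonzero nilpotent ``graph map'' endomorphisms supported on that overlap. What is true, and what the argument actually needs, is that $\mathrm{End}(M(w))$ is \emph{local}: every non-invertible overlap map is nilpotent, so the non-units form an ideal. Your phrase ``the only endomorphisms\ldots are scalars'' overstates this. Second, in the band case the full isomorphism criterion allows not only cyclic permutation but also inversion of the band, and under inversion the parameter transforms as $\lambda \mapsto \lambda^{-1}$; saying ``reversal is already absorbed into the equivalence of bands'' glosses over this change of scalar. (The theorem as printed in the paper also omits the inversion case, so your reading is consistent with the paper, but it is worth being aware that the precise Butler--Ringel statement includes it.) Neither point affects the conclusion, since the intended proof really is a citation, but if you were to flesh the sketch out into a genuine argument these are the two places where the hand-waving would need to be tightened.
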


It is common and convenient to draw strings as orientations of a type $A$ Dynkin diagram, embedded in the plane so that draw direct arrows go down and to the right and inverse arrows go down and to the left. One can similarly draw bands as orientations of a type $\widetilde{A}$ Dynkin diagram.

\begin{example}\label{Ex:Strings}

Consider the following quiver $Q$. The ideal $I = \langle \alpha\beta, \beta \gamma, \gamma \alpha \rangle $ is an admissible ideal of $I$. One can check that $(Q,I)$ is a gentle pair.

\begin{center}
\begin{tikzcd}
& & 4 \arrow[dl, "\alpha"] & & \\ 
1 \arrow[r, "\delta"] & 2 \arrow[rr,"\beta"] & & 3 \arrow[ul,"\gamma"]  \arrow[r,"\mu"] & 5\\
\end{tikzcd}
\end{center}

Some strings on $Q$ with ideal $I$ are $\alpha \delta^{-1}, \delta \beta \mu,$ and $\mu^{-1}\gamma$. There are no bands. We draw these strings below.

\begin{tabular}{ccc}
\begin{tikzcd}
4\arrow[dr, "\alpha"] & & 1 \arrow[dl, "\delta"] \\
& 2 & \\
\end{tikzcd} & 
\begin{tikzcd}
1 \arrow[dr, "\delta"] & & & \\
& 2 \arrow[dr,"\beta"] & & \\
& & 3 \arrow[dr, "\mu"] & \\
& & & 5 \\
\end{tikzcd}
& 
\begin{tikzcd}
& 3 \arrow[dr, "\mu"] \arrow[dl, "\gamma"] & \\
5 & & 4\\
\end{tikzcd}
\end{tabular}

\end{example}

We provide a few more definitions related to the pictorial representation of a string; these  will be useful in Section \ref{sec:CCMap}.

\begin{definition}\label{def:TopAndBottom}
 Let $w = \alpha_1\cdots \alpha_m$ be a string for the gentle pair $(Q,I)$. We say that a vertex $v$ of the string is a \emph{peak} if either
 \begin{enumerate}
     \item[(1)] $v=s(w)$ and $\alpha_1$ is a direct letter,
     \item[(2)] $v=t(w)$ and $\alpha_m$ is an inverse letter, or 
     \item[(3)] $v$ is start of $\alpha_i$ and $\alpha_{i+1}$ for some $i$ with $1 \leq i \leq m-1$.
 \end{enumerate}
 
 The definition of a \emph{deep} is dual. In each case, we say $v$ is \emph{strict} if it only satisfies condition (3).
 
 If $\overline{w}$ is a band, then we define peaks and deeps in the same way; consequently, every peak or deep is strict.
\end{definition}

\begin{definition}\label{def:PeakAndDeep}
Let $w$ be a string.
\begin{itemize}
    \item We say $w$ \emph{starts (ends) on a peak} if there does not exist $\alpha \in Q_1$ such that $\alpha w$ ($w \alpha^{-1}$) is a string.
    \item We say $w$ \emph{starts (ends) on a deep} if there does not exist $\alpha \in Q_1$ such that $\alpha^{-1}w$ ($w \alpha$) is a string.
\end{itemize}
\end{definition}

For example, given the admissible pair $(Q,I)$ in Example 
\ref{Ex:Strings}, the direct string  $\alpha$ starts on a peak and both starts and ends on a deep. It does not end on a peak before $\alpha \delta^{-1}$ is a string.
These definitions again have a visual expectation. For example, $w$ starts on a peak if it is not possible to extend $w$ on the left with an arrow that goes higher. 

We now introduce hooks and cohooks. These notions will be useful to describe irreducible morphisms, and as a consequence the Auslander-Reiten translation at a string module in terms on the combinatorics of the string. First, we describe adding hooks to either end of a string. 

\begin{definition}\label{def:AddHookCohook}
Let $w$ be a string from a gentle pair $(Q,I)$.
\begin{enumerate}
    \item Suppose that $w$ is a string which does not start on a peak, that is, there exists $\beta$ such that $\beta w$ is a string. Let $w'$ be a maximal inverse string such that $t(w')=s(\beta)$. Define ${}_hw$ the string $w'\beta w $. We say that ${}_hw$ is obtained from $w$ by adding a hook on $s(w)$.

    \item Suppose $w$ is a string which does not end on a peak, that is, there exists $\beta^{-1}$ such that $w\beta^{-1}$ is a string. Let $w'$ be the maximal direct string such that $t(\beta^{-1})=s(w')$. Denote by $w_{h}$ the string $w\beta^{-1} w' $. We say that $w_h$ is obtained from $w$ by adding a hook on $t(w)$.
 
\end{enumerate}
\end{definition}

The definition of adding cohooks is dual, and it is denoted by ${}_cw$ and $w_{c}$ the strings obtained from $w$ by adding a cohook on $s(w)$ or $t(w)$ respectively.
Note that for each item in Definition \ref{def:AddHookCohook}, it is possible that $w'$ is a trivial string, so that adding a hook or cohook only involves adding a single arrow to $w$. The requirements related to starting/ending on a peak or deep ensures that $\beta$ exists in each case. 

For most strings, we also have a notion of removing hooks and cohooks from the beginning and end. We again only provide details for hooks as the cohook cases are dual. 

\begin{definition}\label{def:RemoveHookCohook}
Throughout, let $w = \alpha_1\cdots \alpha_m$ be a string.
If $w$ is not an inverse string, let $j\leq m$ be the smallest index such that $\alpha_j$ is direct. Then, we define $_{h^{-1}}w = \alpha_{j+1} \cdots \alpha_m$ where if $j = m$, then we define $_{h^{-1}}w = e_{t(\alpha_m)}$. If $w$ is an inverse string, then  we define  $_{h^{-1}}w = 0$. Define $w_{h^{-1}}$ dually.
\end{definition}

    In the following sections, we need to emphasize if a letter of a string is direct or inverse, to do that we write a letter $\beta$ of a string $w$ as $\alpha^\epsilon$, where $\epsilon=+$ describes a direct letter and $\epsilon=-$ describes an inverse letter.

\section{Module Category from Triangulated Orbifold}\label{sec:ModuleCategory}

 We begin by building a gentle algebra $KQ_T/I_T$ from an orbifold $\mathcal{O}$ with triangulation $T$. Then, we use formulas related to string and band modules to relate $\text{rep}(Q_T,I_T)$ to the combinatorics of arcs on $\mathcal{O}$, inspired by similar results for the surface case in \cite{brustle2011cluster}.

\subsection{Modules from Arcs}

In Section \ref{sec:GenCAFromOrb}, we described how to produce a generalized cluster quiver $Q_{gen}^T$ from a triangulation $T$ of an orbifold $\mathcal{O}$. Now, given $T$ we build a related quiver $Q_T$ along with an admissible ideal $I_T$ of $KQ_T$. To form $Q_T$, we take $Q_{gen}^T$ and at each vertex $v \in P \subseteq Q_0$, we include a loop. We then forget the partitioning $Q_0 = S \sqcup P$ which was necessary for the definition of the generalized cluster quiver. The ideal $I_T$ is generated by paths of length two either of the form (1)  $\alpha\beta$ such that the arcs $\tau_{s(\alpha)}, \tau_{t(\alpha)} = \tau_{s(\beta)}$ and $\tau_{t(\beta)}$ form a triangle or (2) $\delta^2$ where $\tau_{s(\delta)}$ is a pending arc (implying that $\delta$ is a loop). See Table \ref{table:PuzzlePieces} for the quiver and relations corresponding to each type of triangle in a triangulated unpunctured orbifold. A general quiver with relations from a triangulation $T$ is the result of gluing these pieces together along vertices from standard arcs (that is, vertices without an incident loop) and taking the union of the generators of the ideal from each piece. These puzzle pieces were also given in \cite{FST-orbTriang}, where orbifold points of different orders are considered.

\begin{table}
\begin{tabular}{c|c|c|c}
  & Type 0 & Type 1 & Type 2\\
    $T$ & \begin{tikzpicture}[scale=1.25]
     \node[above] at (0.5,0.5){2};
     \draw(1,1) to (0,0) to (2,0) to (1,1);
     \node[below] at (1,0) {1};
     \node[above] at (1.5,0.5){3};
    \end{tikzpicture} & \begin{tikzpicture}[scale = 1.25]
\draw[out=30,in=-30,looseness=1.5] (0,0.3) to (0,2.5);
\draw[out=150,in=-150,looseness=1.5] (0,0.3) to (0,2.5);
\draw[in=0,out=45,looseness =1.25] (0,0.3) to (0,2);
\draw[in=180,out=135,looseness=1.25] (0,0.3) to (0,2);
\draw[thick] (0,1.7) node {$\mathbf{\times}$};
\node[right] at (0.8,1.5){$\tau_3$};
\node[left] at (-0.8,1.5){$\tau_2$};
\node[above] at (0,2){$\tau_1$};
\end{tikzpicture} &
\begin{tikzpicture}[scale=2]
\draw[out=75,in=-180] (0,0) to (0.3,0.85);
\draw[out=0,in=0, looseness = 1] (0,0) to node[midway,left,yshift=-4pt]{$2$} (0.3,0.85);
\draw[out=105,in=0] (0,0) to (-0.3,0.85);
\draw[out=180,in=180, looseness = 1] (0,0) to node[midway,right,yshift=-4pt]{$1$} (-0.3,0.85);
\draw[looseness=1.75,out=0,in=0] (0,0) to (0,1.3);
\draw[looseness=1.75,out=180,in=180] (0,1.3) to (0,0);
\draw[thick] (-0.3,0.65) node {$\mathbf{\times}$};
\draw[thick] (0.3,0.65) node {$\mathbf{\times}$};
\node [yshift=8pt] at (0,1) {$3$};
\end{tikzpicture}
\\ \hline
   $Q^T_{gen}$  & 
\begin{tikzcd}[arrow style=tikz,>=stealth,row sep=4em]
2 \arrow[dr,"\alpha"]
&& 3 \arrow[ll,"\beta"]  \\
& 1 \arrow[ur,"\gamma"]
\end{tikzcd} &  
\begin{tikzcd}[arrow style=tikz,>=stealth,row sep=4em]
2 \arrow[dr,"\alpha"]
&& 3 \arrow[ll,"\beta"]  \\
& \bar{1} \arrow[ur,"\gamma"]
\end{tikzcd}& 
\begin{tikzcd}[arrow style=tikz,>=stealth,row sep=4em]
\bar{2} \arrow[dr,"\alpha"]
&& 3 \arrow[ll,"\beta"]  \\
& \bar{1} \arrow[ur,"\gamma"]
\end{tikzcd}
\\
$Q_T$& \begin{tikzcd}[arrow style=tikz,>=stealth,row sep=4em]
2 \arrow[dr,"\alpha"]
&& 3 \arrow[ll,"\beta"]  \\
& 1 \arrow[ur,"\gamma"]
\end{tikzcd} &  
\begin{tikzcd}[arrow style=tikz,>=stealth,row sep=4em]
2 \arrow[dr,"\alpha"]
&& 3 \arrow[ll,"\beta"]  \\
& 1 \arrow[ur,"\gamma"]
\arrow[out=-90,in=0,loop, "\epsilon"]
\end{tikzcd}&
\begin{tikzcd}[arrow style=tikz,>=stealth,row sep=4em]
2 \arrow[dr,"\alpha"]
\arrow[out=90,in=180,loop, "\rho"]
&& 3 \arrow[ll,"\beta"]  \\
& 1 \arrow[ur,"\gamma"]
\arrow[out=-90,in=0,loop, "\epsilon"]
\end{tikzcd}\\
$I_T$ & $\langle \alpha\gamma, \gamma \beta, \beta \alpha \rangle$& $\langle \alpha\gamma, \gamma \beta, \beta \alpha, \epsilon^2 \rangle$& $\langle \alpha\gamma, \gamma \beta, \beta \alpha, \epsilon^2, \rho^2 \rangle$\\
\end{tabular}
\caption{The first row gives types of triangles in a triangulation of an unpunctured orbifold. The second row gives the pieces of a generalized cluster quiver $Q_{gen}^T$ arising from each triangle. The bottom rows give the quiver $Q_T$ and relations $I_T$ from each triangle. }\label{table:PuzzlePieces}
\end{table}

These algebras from triangulated orbifolds will always be gentle. They are studied in \cite{labardini2023gentleI} and \cite{labardini2023gentle}.

\begin{lemma}[Lemma 4.2 \cite{labardini2023gentleI}]\label{lem:QuiverIsGentle}
The algebra $KQ_T/I_T$ is gentle.
\end{lemma}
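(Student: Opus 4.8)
The plan is to verify directly that the bounded quiver $(Q_T, I_T)$ satisfies the four defining conditions of a gentle pair (Definition \ref{def:GentleAlgebra}), exploiting the fact that $Q_T$ is obtained by gluing together the three puzzle pieces of Table \ref{table:PuzzlePieces} along standard vertices, with $I_T$ the union of the corresponding relations. Since each gluing happens only at a vertex coming from a \emph{standard} arc --- i.e. a vertex with no incident loop --- the local structure at every vertex $v$ of $Q_T$ is completely determined by the one or two puzzle pieces meeting at $v$, so the verification reduces to a finite case check.

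\textbf{Key steps.} First I would observe that $KQ_T/I_T$ is finite-dimensional and $I_T$ is admissible: $I_T$ is generated by paths of length two (the $\alpha\gamma,\gamma\beta,\beta\alpha$ relations in each internal triangle and the loop relations $\epsilon^2$, $\rho^2$), which immediately gives condition (4), and since every sufficiently long path contains a subpath lying in $I_T$, we get $R^m \subseteq I_T$ for $m$ large, making $I_T$ admissible. Second, for condition (1), note that a vertex $i$ corresponding to a standard arc $\tau_i$ lies on at most two triangles of $T$, each contributing at most one arrow into and one arrow out of $i$, for a total of at most two incoming and two outgoing; a vertex $\bar i$ for a pending arc lies on a single triangle (Type 1 or Type 2) and the incident arrows are one arrow in, one arrow out, plus the loop, again at most two in and at most two out. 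Third, for conditions (2) and (3), I would go arrow by arrow: for each arrow $\beta$ of $Q_T$ I must check that among the at most two arrows $\alpha$ with $\alpha\beta$ a path, at most one composite lies in $I_T$ and at most one lies outside. For the triangle arrows $\alpha,\beta,\gamma$ this is forced by the cyclic relations $\alpha\gamma,\gamma\beta,\beta\alpha \in I_T$ together with the geometry (the non-relation paths are exactly those crossing through the triangle into the next one); for a loop $\epsilon$ at a pending vertex, $\epsilon^2 \in I_T$ handles the ``in $I_T$'' side, while $\epsilon$ followed by the unique outgoing triangle arrow $\gamma$ gives the unique path not in $I_T$, and dually on the other side. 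One then checks the same at the junction where two puzzle pieces are glued, where the relevant compositions again come from distinct triangles and so exactly one is a triangle relation.

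\textbf{Main obstacle.} The genuine content is entirely in the bookkeeping of step three: making sure that when two puzzle pieces are glued at a standard vertex, no new length-two path is created that would violate (2) or (3) --- in particular confirming that the ``at most one relation / at most one non-relation'' dichotomy is preserved across the glue, and that loops interact correctly with the triangle relations at Type 1 and Type 2 pieces. I expect no conceptual difficulty, only the need to be exhaustive; since $(Q_T,I_T)$ is built from only three local models, a short case analysis of the possible neighbourhoods of each arrow suffices, and appealing to the already-cited Lemma 4.2 of \cite{labardini2023gentleI} we may simply record that this check goes through.
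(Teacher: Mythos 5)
The paper does not supply a proof of this lemma: it is stated as a citation of Lemma~4.2 of \cite{labardini2023gentleI}, so there is no argument of the authors to compare your proposal against. That said, the direct verification you outline is the natural and correct approach, and the structure of your case analysis (working vertex by vertex using the three puzzle pieces and the fact that gluing happens only at loop-free standard vertices) matches what a proof of this statement should look like.

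Two small points worth tightening. First, your admissibility argument (``every sufficiently long path contains a subpath lying in $I_T$'') is stated too casually; it is genuinely where the unpunctured hypothesis enters. A path in $Q_T$ avoiding the triangle relations and loop relations walks around a single marked point through the fan of arcs incident to it, and it is precisely because all marked points lie on the boundary that such fans are finite and terminate at a boundary edge, so that path lengths are uniformly bounded. For a punctured surface this fails, and the ideal would not be admissible, so the hypothesis is not decorative. Second, the final sentence of your proposal -- ``appealing to the already-cited Lemma~4.2 of \cite{labardini2023gentleI} we may simply record that this check goes through'' -- is circular as written, since that lemma is exactly what you are trying to establish. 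Either carry out the exhaustive arrow-by-arrow check at the glued vertices (as you correctly describe it, this is finite and routine), or simply cite \cite{labardini2023gentleI} for the whole statement as the paper does, but do not do both.
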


Given an arc $\gamma$ in an orbifold $\mathcal{O}$ with triangulation $T = \{\tau_1,\ldots,\tau_n\}$ such that $\gamma \notin T$,  we can construct a string $w_\gamma$ in $KQ_T/I_T$.  Pick an orientation for $\gamma$, and let $\tau_{i_1},\ldots,\tau_{i_{m+1}}$ be the arcs in $T$ crossed by $\gamma$.  We will establish a string $w_{\gamma} = \alpha_1^{\epsilon_1}\cdots \alpha_m^{\epsilon_m}$. If $m =0$, then $w_\gamma = e_{\tau_{i_1}}$. Now, suppose $m > 0$.  Necessarily, for all $1 \leq j \leq m$, there is an arrow $\alpha_j$ between $i_j$ and $i_{j+1}$ in $Q_T$.  If $i_j \neq i_{j+1}$ and $\tau_{i_{j+1}}$ follows $\tau_{i_j}$ counterclockwise,  we pick $\epsilon_j = +$; if instead $\tau_{i_{j+1}}$ follows $\tau_{i_j}$ clockwise, we pick $\epsilon_j = -$.  If $i_j =i_{j+1}$,  then $\tau_{i_j}$ is a pending arc.  Then we set $\alpha_j$ to be the loop at vertex $i_j$ in $Q_T$, and we set $\epsilon_j = +$ only if, between its intersections with $\tau_{i_j}$,  the unique marked point incident to $\tau_j$ is to the right of $\gamma$ (or equivalently, if the orbifold point enclosed in $\tau_j$ is to the left of $\gamma$). 

Similarly, given a closed curve $\xi$ which is not contractible and does not enclose a disc with one orbifold point, we can build a band $\overline{w}_\xi$. As for building a band graph for $\xi$ in Section \ref{subsec:SnakeGraphsfromOrbifold}, we choose a point $p$ on $\xi$ which is not an intersection point of $\xi$ and $T$ nor inside a pending arc. We also choose an orientation of $\xi$. Using the chosen orientation for $\xi$ with start and end point at $p$, let $\tau_{i_1},\cdots,\tau_{i_m}$ be the ordered list of arcs crossed by $\xi$. First, we build a string $w_\xi = \alpha_1^{\epsilon_1}\cdots \alpha_{m-1}^{\epsilon_{m-1}}$ as before from this list of arcs. Necessarily since $\tau_{i_1}$ and $\tau_{i_m}$ border the same triangle since this is the triangle which contains $p$. There must be $\alpha_m \in Q_T$ and $\epsilon_m \in \{+,-\}$, such that $s(\alpha_m^{\epsilon_m}) =i_m$ and $t(\alpha_{m}^{\epsilon_m}) = i_1$. Then we set $\overline{w}_\xi = \alpha_1^{\epislon_1} \cdots \alpha_{m-1}^{\epsilon_{m-1}}\alpha_m^{\epsilon_m}$. 

We claim that every string or band in $KQ_T/I_T$ corresponds to an arc or closed curve in this way.

\begin{theorem}\label{thm:StringsBijectCurves}
Fix an orbifold $\mathcal{O}$ with triangulation $T$.
\begin{enumerate}
    \item Each string in $KQ_T/I_T$ corresponds to a (possibly generalized) arc in $\mathcal{O}$. 
    \item Each band in $KQ_T/I_T$ corresopnds to a closed curve in $\mathcal{O}$. 
\end{enumerate}
In each case, the corresponding curve is unique up to isotopy.
\end{theorem}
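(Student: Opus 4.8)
The plan is to construct the inverse map to $\gamma \mapsto w_\gamma$ (and $\xi \mapsto \overline{w}_\xi$) explicitly, by reading a string (resp. band) as a recipe for drawing a curve that crosses the triangulation in the prescribed pattern, and then to verify that the two constructions are mutually inverse up to isotopy. First I would set up the local dictionary using the puzzle pieces in Table \ref{table:PuzzlePieces}: each vertex $i$ of $Q_T$ is the arc $\tau_i$; a direct or inverse arrow $\alpha_j^{\epsilon_j}$ between $i_j$ and $i_{j+1}$ that is \emph{not} a loop records that these two arcs share a triangle and tells us on which side of $\gamma$ the third arc of that triangle lies (counterclockwise versus clockwise); a loop $\delta$ at a pending vertex with $\epsilon_j = +$ or $-$ records which way $\gamma$ winds around the orbifold point (the two nontrivial windings depicted after Definition \ref{def:Triangulation}, using that the order is three). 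Given a string $w = \alpha_1^{\epsilon_1}\cdots\alpha_m^{\epsilon_m}$, I would build a curve $\gamma_w$ by starting in the triangle on one side of $\tau_{i_1}$, crossing $\tau_{i_1}, \tau_{i_2}, \dots, \tau_{i_{m+1}}$ in order, where at each step the relation condition (no subword of $w$ lies in $I_T$, in particular no $\alpha\gamma,\gamma\beta,\beta\alpha$ and no $\delta^2$) guarantees that consecutive crossings can be realized by a curve segment through a single triangle (or a single winding around an orbifold point), and the endpoints are placed at the marked points of the initial and final triangles not separated by $\tau_{i_1}$, $\tau_{i_{m+1}}$ respectively. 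The crucial point that $\gamma_w$ is an \emph{arc} — non-self-intersecting away from marked/orbifold points, or a bona fide generalized arc with minimal self-intersections — follows because the ban on subwords in $I_T$ is exactly the combinatorial shadow of "the curve does not backtrack through a triangle and does not wind more than necessary around an orbifold point," and any self-intersection forced by the string would produce a forbidden subword (for the $\delta^2$ relation one uses that winding three times equals winding zero times, so $\delta^2 \in I_T$ is precisely the obstruction to a contractible loop around the orbifold point).

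Next I would check well-definedness: $\gamma_w$ depends only on $w$ up to the equivalence $w \sim w^{-1}$ (reversing the string corresponds to reversing the orientation of the curve, hence the same isotopy class of arc, matching the first bullet of Theorem \ref{thm:BRString}); and conversely, if $\gamma$ is an arc with $\gamma \notin T$ then applying the string construction of Section \ref{sec:ModuleCategory} to $\gamma$ and then the curve construction above returns an arc isotopic to $\gamma$, because a minimal-position representative of an arc is determined by its sequence of triangles traversed together with the winding data at pending arcs, which is exactly what $w_\gamma$ records. For the band case I would argue in parallel: a band $\overline{w} = \alpha_1^{\epsilon_1}\cdots\alpha_m^{\epsilon_m}$ is a cyclic word every power of which is a string, so I would draw $\gamma_{\overline{w}}$ as a closed curve that crosses $\tau_{i_1},\dots,\tau_{i_m}$ cyclically; the "no power is a smaller power" primitivity condition on a band translates into the closed curve being primitive (not a proper power of a shorter closed curve), and the cyclic-permutation equivalence of bands matches the choice of basepoint $p$ on $\xi$, consistently with the second bullet of Theorem \ref{thm:BRString}. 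One must also check that $\gamma_{\overline{w}}$ is neither contractible nor enclosing a single orbifold point: contractibility would force $\overline{w}$ to be (up to rotation) a product of the triangle relations, hence zero in the algebra, and enclosing one orbifold point of order three would force $\overline{w}$ to be a power of the loop $\delta$, which is not a band since $\delta^2 \in I_T$.

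I expect the main obstacle to be the careful handling of pending arcs and the winding combinatorics — in particular, making precise that the three local winding pictures drawn after Definition \ref{def:Triangulation} are a complete and non-redundant list, and that the loop letter $\delta^{\pm}$ in a string faithfully encodes which of these occurs. Closely related is the subtlety that two strings that differ only by how one reads off the loop orientation at a pending arc must be shown to give the \emph{same} arc exactly when they are equal as strings (up to inversion), so that the correspondence is genuinely a bijection and not merely a surjection; this requires tracking the "marked point to the right of $\gamma$" convention from Section \ref{sec:ModuleCategory} through an explicit picture in each of the three triangle types of Table \ref{table:PuzzlePieces}. The remaining steps — that distinct triangles force distinct crossing patterns, that minimal position exists and is unique up to isotopy (a known fact for curves on orbifolds, e.g.\ via \cite{FST-orbTriang}), and that orientation reversal is the only ambiguity — are routine once the local dictionary is pinned down, so I would state them as lemmas and spend the bulk of the argument on the pending-arc analysis. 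The uniqueness-up-to-isotopy clause at the end of the statement then follows immediately, since the curve constructions produce a minimal-position representative, and minimal-position representatives in a fixed isotopy class are unique up to isotopy on an orbifold.
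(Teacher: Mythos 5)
Your proposal takes essentially the same constructive approach as the paper: read off local curve segments from each letter of the string (resp.\ band), with the sign $\epsilon_j$ dictating orientation, loop letters dictating winding at a pending arc, and special handling of the initial and terminal segments, then splice these segments into a curve. The paper carries this out in explicit case-by-case detail with pictures---in particular distinguishing the endpoint cases when the first or last crossed arc is a pending arc (loop versus non-loop first letter, bigon versus monogon neighbor, and the case where the curve must wind once around the orbifold point creating a self-intersection)---which is exactly the pending-arc subtlety you correctly flagged as the main obstacle.
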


\begin{proof}

If $w = e_v$ for $v \in Q_0$, then the associated arc will be the other diagonal in the quadrilateral around $\tau_v$ in $T$. Now, consider a string $w = \alpha_1^{\epsilon_1}\cdots \alpha_m^{\epsilon_m}$. We can consider $Q_T$ as a combination of pieces as in Table \ref{table:PuzzlePieces}. Given two consecutive arrows in $w$, $\alpha_j$ and $\alpha_{j+1}$ for $1 \leq j < m$, these arrows are either in different triangle pieces of $Q_T$ or exactly one of these arrows is a loop.  In the latter case, if $\alpha_j$ is a loop then $\alpha_{j+1}$ must be in the same triangle piece and same for $\alpha_{j-1}$, since we form these quivers by gluing vertices which are not incident to loops. 

Using this fact, we piece $\gamma$ together with small segments in $\mathcal{O}$.  For each factor $\alpha_i^{\epsilon_i}$ in $w$  for a non-loop arrow $\alpha_i$, we draw a segment $\rho_i$ between the arcs $\tau_{t(\alpha_i)}$ and $\tau_{s(\alpha_i)}$. If $\epsilon_i = +$, we orient $\rho_i$ from $\tau_{s(\alpha_i)}$ to $\tau_{t(\alpha_i)}$; otherwise, we orient $\rho_i$ from $\tau_{t(\alpha_i)}$ to $\tau_{s(\alpha_i)}$.

\begin{center}
\begin{tikzpicture}[scale=1.25]
     \node[above,  xshift = -5] at (0.5,0.5){$\tau_{t(\alpha_i)}$};
     \draw(1,1) to (0,0) to (2,0) to (1,1);
     \node[above, xshift = 5] at (1.5,0.5){$\tau_{s(\alpha_i)}$};
     \draw[orange, ->] (1.4,0.5) to node[below]{$\rho_i$} (0.6,0.5);    
     \node[above,  xshift = -5] at (4.5,0.5){$\tau_{t(\alpha_i)}$};
     \draw(5,1) to (4,0) to (6,0) to (5,1);
     \node[above, xshift = 5] at (5.5,0.5){$\tau_{s(\alpha_i)}$};
     \draw[orange, <-] (5.4,0.5) to node[below]{$\rho_i$} (4.6,0.5);
    \end{tikzpicture} 
 \end{center}   

If $\alpha_i$ is a loop,  then $\tau_{s(\alpha_i)} = \tau_{t(\alpha_i)}$ is a pending arc.  We draw a segment $\rho_i$ inside the pending arc.  If $\epsilon_i = +$, we orient $\rho_i$ so that the unique marked point incident to $\tau_{s(\alpha_i)}$ is to the right of $\rho_i$; otherwise, we orient $\rho_i$ so that this marked point is to the left.  

\begin{center}
\begin{tikzpicture}[scale = 1.5]
\draw[out=30,in=-30,looseness=1.5] (0,0.3) to (0,2);
\draw[out=150,in=-150,looseness=1.5] (0,0.3) to (0,2);
\draw[in=0,out=45,looseness =1.25] (0,0.3) to (0,1.3);
\draw[in=180,out=135,looseness=1.25] (0,0.3) to (0,1.3);
\draw[thick] (0,1) node {$\mathbf{\times}$};
\draw[fill=black] (0,0.3) circle [radius=1pt];
\draw[fill=black] (0,2) circle [radius=1pt];
\node[] at (0,1.4){$\tau_{s(\alpha_i)}$};
\draw[->, orange] (-0.2,0.7) to node[below]{$\rho_i$} (0.2,0.7);
\draw[out=30,in=-30,looseness=1.5] (3,0.3) to (3,2);
\draw[out=150,in=-150,looseness=1.5] (3,0.3) to (3,2);
\draw[in=0,out=45,looseness =1.25] (3,0.3) to (3,1.3);
\draw[in=180,out=135,looseness=1.25] (3,0.3) to (3,1.3);
\draw[thick] (3,1) node {$\mathbf{\times}$};
\draw[fill=black] (3,0.3) circle [radius=1pt];
\draw[fill=black] (3,2) circle [radius=1pt];
\draw[<-, orange] (2.8,0.7) to node[below]{$\rho_i$} (3.2,0.7);
\node[] at (3,1.4){$\tau_{s(\alpha_i)}$};
\end{tikzpicture}
\end{center}

Suppose $\tau_{s(\alpha_1^{\epsilon_1})} = \tau_{s(w)}$ is not a pending arc. Then, $\tau_{s(w)}$ borders two triangles. There already exists a segment $\rho_1$ in one of these triangles. Let the other triangle be denoted $\Delta_0$ and let $v_0$ be the vertex of $\Delta_0$ which is not incident to $\tau_{s(\alpha_1^{\epsilon_1})}$.  In this case, we set $\rho_0$ to be the arc between $v_0$ and $\tau_{s(\alpha_1^{\epsilon_1})}$,  oriented from $v_0$ to $\tau_{s(\alpha_1^{\epsilon_1})}$. If $\tau_{t(\alpha_m^{\epsilon_m})} = \tau_{t(w)}$ is not a pending arc, we set $\rho_{m+1}$ similarly. 

\begin{center}
\begin{tikzpicture}[scale=1.25]
     \node[above,  xshift = -5] at (0.5,0.5){};
     \draw(1,1) to (0,0) to node[below]{$\tau_{s(w)}$}(2,0) to (1,1);
     \node[above, xshift = 5] at (1.5,0.5){};
     \draw[->, orange] (1,1) to node[right]{$\rho_0$} (1,0);    
   \node[above,  xshift = -5] at (4.5,0.5){};
     \draw(5,1) to (4,0) to node[below]{$\tau_{t(w)}$}(6,0) to (5,1);
     \node[above, xshift = 5] at (5.5,0.5){};
     \draw[<-, orange] (5,0.95) to node[right]{$\rho_{m+1}$} (5,0);    
 \end{tikzpicture}
 \end{center}

Next, suppose $\tau_{s(\alpha_1^{\epsilon_1})}$ is a pending arc,  and $\tau_{s(\alpha_1^{\epsilon_1})} = \tau_{t(\alpha_1^{\epsilon_1})}$,  so that $\alpha_1$ is a loop.  Let $\Delta_0$ be the triangle which this pending arc borders.  If $\Delta_0$ has two distinct vertices, let $v_0$ be the marked point in this bigon which is not incident to  $\tau_{s(\alpha_1^{\epsilon_1})}$,  and let $\rho_0$ be the arc between $v_0$ and $\tau_{s(w)}$,  oriented from $v_0$ to $\tau_{s(w)}$. If $\Delta_0$ only has one vertex, then $\Delta_0$ consists of $\tau_{s(w)}$, another pending arc $\sigma_p$, and a standard arc $\sigma_s$. Note that we can never see a triangle consisting of three pending arcs since this would imply that the endpoint of the pending arcs is a puncture. Let $v_0$ be the unique endpoint of $\Delta_0$. Suppose that $\tau_{s(w)}$ follows $\sigma_p$ in counterclockwise order. Then, we let $\rho_0$ be the arc between $v_0$ and $\tau_{s(w)}$, oriented from $v_0$ to $\tau_{s(w)}$ which follows $\sigma_s$ in clockwise direction. 
If $\alpha_m$ is a loop, we do something similar.

\begin{center}
\begin{tikzpicture}[scale = 2]
\draw[out=30,in=-30,looseness=1.5] (0,0) to (0,1.5);
\draw[out=150,in=-150,looseness=1.5] (0,0) to (0,1.5);
\draw[in=0,out=45,looseness =1.25] (0,0) to (0,1);
\draw[in=180,out=135,looseness=1.25] (0,0) to node[right, scale = 0.8] {$\tau_{s(w)}$} (0,1);
\draw[thick] (0,0.85) node {$\mathbf{\times}$};
\draw[fill=black] (0,0) circle [radius=1pt];
\draw[fill=black] (0,1.5) circle [radius=1pt];
\draw[->, orange](0,1.5) to node[right, yshift = -3pt]{$\rho_0$} (0,1.05);
\draw[out=75,in=-180] (3,0) to (3.3,0.85);
\draw[out=0,in=0, looseness = 1.5] (3,0) to node[midway,left,yshift=-4pt, scale = 0.8]{$\tau_{s(w)}$} (3.3,0.85);
\draw[out=105,in=0] (3,0) to (2.7,0.85);
\draw[out=180,in=180, looseness = 1.3] (3,0) to node[midway,right,yshift=-4pt, xshift = 2pt, scale = 0.8]{$\sigma_p$} (2.7,0.85);
\draw[looseness=1.75,out=0,in=0] (3,0) to  (3,1.3);
\draw[looseness=1.75,out=180,in=180] (3,1.3) to (3,0);
\draw[thick] (2.7,0.65) node {$\mathbf{\times}$};
\draw[thick] (3.3,0.65) node {$\mathbf{\times}$};
\node [] at (3,1.4) {$\sigma_s$};
\draw[out = 180, in = -160, looseness = 1.4, orange] (3,0) to (2.7,1); 
\draw [out = 20, in = 90, ->, orange] (2.7,1) to node[above]{$\rho_0$} (3.2,0.9);
\draw[fill=black] (3,0) circle [radius=1pt];
\end{tikzpicture}\\
\end{center}

The final case is when $\tau_{s(\alpha_1^{\epsilon_1})}$ is a pending arc,  but $\tau_{s(\alpha_1^{\epsilon_1})} \neq \tau_{t(\alpha_1^{\epsilon_1})}$.  In this case, we draw $\rho_0$ as an arc which starts at the unique marked point incident to $\tau_{s(w)}$ and winds once around the orbifold point, creating a self-intersection.  We choose as a convention winding in the positive direction (counterclockwise); however,  since the orbifold point is order 3, this is isotopic to winding once in the negative direction (clockwise).  If  $\tau_{t(\alpha_m^{\epsilon_m})}$ is a pending arc,  but $\tau_{s(\alpha_m^{\epsilon_m})} \neq \tau_{t(\alpha_m^{\epsilon_m})}$, we do something similar. 

\begin{center}
\begin{tikzpicture}[scale = 2]
\draw[out=30,in=-30,looseness=1.5] (0,0) to (0,1.5);
\draw[out=150,in=-150,looseness=1.5] (0,0) to (0,1.5);
\draw[in=0,out=45,looseness =1.25] (0,0) to (0,1.1);
\draw[in=180,out=135,looseness=1.25] (0,0) to (0,1.1);
\draw[thick] (0,0.75) node {$\mathbf{\times}$};
\draw[fill=black] (0,0) circle [radius=1pt];
\draw[fill=black] (0,1.5) circle [radius=1pt];
\draw[orange, out = 60, in = 0] (0,0) to node[left, yshift = -8pt]{$\rho_0$} (0,1);
\draw[orange, out = 180, in = 180, looseness = 1.2](0,1) to (0,0.6);
\draw[orange, out = 0, in = 270, ->] (0,0.6) to (0.2,1);
\node[] at (0,1.2){$\tau_{s(w)}$};
\end{tikzpicture}
\end{center}

Since our string satisfies $t(\alpha_i^{\epsilon_i}) = s(\alpha_{i+1}^{\epsilon_{i+1}})$,  we can connect all $\rho_i$ to create an arc $\gamma(w)$ whose corresponding string is $w$.  Up to isotopy, there is a unique way to do this. 

We can follow the same line of reasoning for a band $\overline{w} = \alpha_1^{\epsilon_1}\cdots \alpha_m^{\epsilon_m}$, making curves $\rho_1,\ldots,\rho_m$ and connecting these to form a closed curve $\xi_{\overline{w}}$.
\end{proof}

\begin{remark}
Baur and Coelho Sim{\~o}es give a geometric model for any gentle algebra via a dissection of a surface in \cite{baur2021geometric}; a similar model for the derived category of a gentle algebra appears in \cite{opper2018geometric}. The former shows that indecomposable modules of the algebra correspond to equivalence classes of \emph{permissible curves} on this surface. Given an orbifold $\mathcal{O}$ with triangulation $T$ and corresponding gentle algebra $KQ_T/I_T$, we can go from the pair $\mathcal{O}, T$ to the surface with dissection from \cite{baur2021geometric} by replacing all pending arcs and their interior with monogons which each enclose an unmarked boundary component. In an orbifold, arcs which wind too many times around an orbifold point are isotopic to those with less winding; in the surface with dissection model, this property is replaced with a condition that makes arcs with too high of a winding number not permissible.
\end{remark}

Combining Theorems \ref{thm:BRString} and \ref{thm:StringsBijectCurves} shows us that the indecomposable objects in $\text{rep}(KQ_T/I_T)$ correspond to arcs and closed curves in $\mathcal{O}$.

\begin{corollary}\label{cor:ArcsGiveIndecomp}
Given an orbifold $\mathcal{O}$ with triangulation $T$, the indecomposable objects in $\text{rep}(Q_T,I_T)$ are in bijection with (possibly generalized) arcs and closed curves in $\mathcal{O}$.
\end{corollary}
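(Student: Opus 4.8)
The plan is to combine Theorem~\ref{thm:BRString} with Theorem~\ref{thm:StringsBijectCurves}; the only real work is checking that the correspondence descends to isomorphism classes and is a bijection there. By Lemma~\ref{lem:QuiverIsGentle} the algebra $KQ_T/I_T$ is gentle, so Theorem~\ref{thm:BRString} applies: every indecomposable is, up to isomorphism, a string module $M(w)$ or a band module $M(\overline w,n,\lambda)$, with $M(w)\cong M(w')$ iff $w'=w^{\pm1}$, and $M(\overline w,n,\lambda)\cong M(\overline w',n',\lambda')$ iff $n=n'$, $\lambda=\lambda'$ and $\overline w,\overline w'$ coincide up to cyclic permutation. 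Thus the indecomposables (up to isomorphism) are indexed by the set of strings modulo $w\sim w^{-1}$, together with the set of bands modulo cyclic permutation and inversion, plus the extra scalar/multiplicity data in the band case.

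Next I would invoke both directions of Theorem~\ref{thm:StringsBijectCurves}. The forward maps $w\mapsto\gamma(w)$ and $\overline w\mapsto\xi_{\overline w}$ produce, respectively, a (possibly generalized) arc and a closed curve, each well defined up to isotopy. The backward direction is the recipe preceding the theorem: an arc $\gamma\notin T$ with a chosen orientation gives a string $w_\gamma$ (a trivial string $e_v$ being attached to the flip of $\tau_v$), and a closed curve $\xi$ with chosen orientation and basepoint gives a band $\overline w_\xi$. Two things must be verified. First, that the forward and backward assignments are mutually inverse modulo the stated equivalences, i.e.\ $\gamma(w_\gamma)$ is isotopic to $\gamma$, $w_{\gamma(w)}\in\{w,w^{-1}\}$, and likewise for bands; this is exactly the uniqueness clause of Theorem~\ref{thm:StringsBijectCurves}. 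Second, that the backward assignment is independent of the auxiliary choices at the level of isomorphism classes: reversing the orientation of an arc reverses each local segment $\rho_i$ and swaps the clockwise/counterclockwise conventions, hence replaces $w_\gamma$ by $w_\gamma^{-1}$, and $M(w_\gamma)\cong M(w_\gamma^{-1})$; for a closed curve, changing basepoint cyclically permutes $\overline w_\xi$ and reversing orientation inverts it, and band modules are insensitive to both. Together these give a bijection between isomorphism classes of string modules and (generalized) arcs, and between bands and closed curves, which is the assertion.

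The point that requires the most care — indeed the only nontrivial point — is matching the quotient on the algebraic side with the quotient on the geometric side, and in particular deciding how to phrase the statement for \emph{band modules}: a single band $\overline w$ (equivalently a single closed curve) carries an entire family $\{M(\overline w,n,\lambda):n\in\N,\ \lambda\in K^\times\}$ of pairwise non-isomorphic indecomposables, so the clean bijection is between closed curves and bands, with these families sitting over them. If one insists on indecomposable \emph{modules} on the nose, one must either fix $n$ (the convention $n=1$ is the one adopted elsewhere in this paper) and regard $\lambda\in K^\times$ as extra decoration of the curve, or build the scalar into the geometric data. I would state this convention explicitly and note that, either way, the combinatorial skeleton recorded by the correspondence — string versus band, and which arc or closed curve — is exactly the content of the statement, and that arcs belonging to $T$ itself are excluded, consistently with trivial strings $e_v$ corresponding to flips of $\tau_v$ rather than to $\tau_v$.
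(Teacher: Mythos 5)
Your proposal follows exactly the paper's route: combine Theorem~\ref{thm:BRString} with Theorem~\ref{thm:StringsBijectCurves} (which applies thanks to Lemma~\ref{lem:QuiverIsGentle}), and this is precisely the one-line derivation the paper gives immediately after Theorem~\ref{thm:StringsBijectCurves}. Your additional checks that the correspondence descends to isomorphism classes and is independent of the auxiliary choices of orientation and basepoint are correct, and your remark about band modules is a genuine and worthwhile observation that the paper's terse statement passes over: a single closed curve carries the whole family $\{M(\overline{w},n,\lambda)\}$, so the ``bijection'' in the band case is really between closed curves and bands, with $\lambda\in K^\times$ (and $n$, which the paper later fixes to $1$) as extra decoration.
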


\subsection{Morphisms and AR Translation} 

We now know that the indecomposable objects in $\text{rep}(Q_T,I_T)$ are in bijection with arcs and closed curves on $\mathcal{O}$. Next, we compare combinatorial descriptions of irreducible morphisms with the geometry on an orbifold.  Irreducible morphisms were described in terms of string combinatorics in \cite{butler1987auslander}.

\begin{lemma}[\cite{butler1987auslander}]\label{lem:BRIrredMorphisms}

Let $KQ/I$ be a gentle algebra.

\begin{enumerate}
    \item Let $w$ be a string which does not start (end) on a peak. Then, the canonical embedding $M(w) \to M({}_h w)$ ($M(w) \to M(w_h)$) is irreducible.
    \item Let $w$ be a string which starts (ends) on a peak, and suppose that $w$ is not a direct (inverse) string. Then, the canonical projection $M(w) \to M({}_{c^{-1}}w)$ ($M(w) \to M(w_{c^{-1}})$) is irreducible.
\end{enumerate}

Moreover, this is a complete list of all irreducible morphisms in $\text{rep}(Q_T,I_T)$.
\end{lemma}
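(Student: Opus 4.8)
The plan is to obtain this as the specialization to $KQ_T/I_T$ of the Butler--Ringel classification of irreducible morphisms over string algebras \cite{butler1987auslander}. By Lemma \ref{lem:QuiverIsGentle} the algebra $KQ_T/I_T$ is gentle, hence a string algebra, so the general machinery applies and the only real work is to recall why it holds and to check that it produces exactly the four families of maps in the statement. Throughout I would work in $\mathrm{rep}(Q_T,I_T)$ via its identification with $\mathrm{mod}\,KQ_T/I_T$ recalled in Section \ref{sec:Strings}.

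First I would check that each listed map is irreducible by ruling out nontrivial factorizations directly. Consider the canonical inclusion $\iota\colon M(w)\to M({}_hw)$ for $w$ not starting on a peak. It is a monomorphism which is not a section: $M({}_hw)$ is indecomposable and ${}_hw$ is strictly longer than $w$, so $M(w)\not\cong M({}_hw)$. Given a factorization $\iota=hg$ through $M(w)\xrightarrow{g}X\xrightarrow{h}M({}_hw)$, I would decompose $X$ into indecomposables (string and band modules, by Theorem \ref{thm:BRString}) and use the standard basis of $\Hom$-spaces between string modules by graph maps --- each factoring as a canonical projection onto a common quotient substring followed by a canonical inclusion --- to conclude that $g$ must be a split mono or $h$ a split epi; informally, $M({}_hw)$ is the smallest enlargement of $M(w)$ at the chosen end along which $\iota$ is realized, so nothing fits strictly in between. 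The remaining cases in (1) and (2) go the same way, using $M(w)\cong M(w^{-1})$ to interchange the two ends of the string and the $K$-linear duality (which swaps hooks with cohooks, and inclusions with projections) to pass between (1) and (2).

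For completeness of the list I would argue through almost split sequences. Butler and Ringel construct, for each non-projective indecomposable string module $M(w)$, the almost split sequence ending at $M(w)$: its middle term is a direct sum of at most two indecomposable string modules, each obtained from $w$ by applying one of the moves above to one of its ends (adding a hook, or deleting a cohook), and the components of the sequence are precisely the irreducible morphisms with target $M(w)$. Dually, for $M(w)$ non-injective the almost split sequence starting at $M(w)$ records all irreducible morphisms out of $M(w)$, again as a sum of at most two string modules of the same shape. Comparing these middle terms with Definitions \ref{def:AddHookCohook} and \ref{def:RemoveHookCohook} identifies the irreducible morphisms having a string module as source or target with exactly the maps in (1)--(2); the finitely many projective and injective indecomposables are treated separately via their radicals and socle quotients, which are again string modules of the listed form.

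The main obstacle is the final bookkeeping: for a given $w$ one must determine which of the four operations is the applicable one at each end, according to whether $w$ starts or ends on a peak and whether $w$ is a direct or inverse string, and then confirm that these account for all --- and no more than --- the at most two middle terms of each almost split sequence. One point to flag is that $\mathrm{rep}(Q_T,I_T)$ also contains band modules $M(\overline{w},n,\lambda)$, whose irreducible morphisms are the tube-internal maps $M(\overline{w},n,\lambda)\to M(\overline{w},n\pm 1,\lambda)$ and are not of hook/cohook type; accordingly the completeness assertion is to be read as covering all irreducible morphisms with at least one string-module endpoint, which is exactly what is needed in the sequel.
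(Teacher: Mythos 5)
The paper offers no proof of this lemma --- it is cited directly from Butler and Ringel \cite{butler1987auslander}, exactly as you recognize at the outset, so there is no in-paper argument to compare against. Your sketch is a faithful high-level reconstruction of the source: irreducibility of the hook/cohook maps via the graph-map description of $\Hom$-spaces between string modules, completeness via the explicit almost split sequences for string modules, and $M(w)\cong M(w^{-1})$ plus $K$-linear duality to pass between the four cases. Your final paragraph identifies a genuine imprecision in the statement as printed: read literally, the ``complete list of all irreducible morphisms in $\text{rep}(Q_T,I_T)$'' omits the tube-internal irreducible morphisms $M(\overline{w},n,\lambda)\to M(\overline{w},n\pm 1,\lambda)$ between band modules (and there is also a cosmetic slip in that the first line says $KQ/I$ but the completeness clause says $\text{rep}(Q_T,I_T)$); the claim is correct when read as classifying irreducible morphisms with a string-module endpoint, which is how it is used in Section \ref{sec:ModuleCategory}. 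One small sign slip worth fixing in your write-up: the middle term of the almost split sequence \emph{ending} at $M(w)$ is obtained from $w$ by \emph{removing} a hook or \emph{adding} a cohook (the inverse operations), while adding a hook or removing a cohook produces the middle term of the sequence \emph{starting} at $M(w)$.
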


Br{\"u}stle and Zhang show that adding a hook or removing a cohook from a string $w$ corresponds to moving one endpoint of $\gamma(w)$ \cite{brustle2011cluster}. We show that the same holds true in an orbifold. 

Given a marked point $b$, let $b^+$ be the marked point on the same boundary component as $b$ which is immediately counterclockwise from $b$; it is possible that $b = b^+$. Define $b^-$ as the marked point which is immediately clockwise from $b$. Given an arc $\gamma$ with an orientation, let ${}^+\gamma$ denote the arc which is isotopic to $\gamma$ except we  adjust $\gamma$ to start at $s(\gamma)^+$. To be precise, let $\sigma$ denote the portion of the boundary component between $s(\gamma)^+$ and $s(\gamma)$, oriented from $s(\gamma)^+$ to $s(\gamma)$. Then, ${}^+\gamma$ is the result of composing $\sigma$ and $\gamma$ with their given orientations and avoiding $s(\gamma)$.  Let $\gamma^+$ be the arc which is isotopic to $\gamma$ except it ends at $t(\gamma)^+$. Define ${}^-\gamma$ and $\gamma^-$ analogously. 

\begin{lemma}\label{lem:RotateStringArcOnce}
Let $T$ be a triangulation of an orbifold $\mathcal{O}$ with gentle pair $(Q_T,I_T)$. Let $w$ be a string and $\gamma = \gamma(w)$. 
\begin{enumerate}
\item Suppose $w$ does not start (end) on a peak (deep). Then $\gamma({}_h w) = {}^+\gamma$ ($\gamma(w_h) = \gamma^+$).
\item Suppose $w$ starts (ends) on a peak and that $w$ is not a direct (inverse)  string. Then, $\gamma({}_{c^{-1}}w) =  {}^+\gamma$ ($\gamma(w_{c^{-1}}) = \gamma^+$).
\end{enumerate}
Moreover, there exist parallel conditions describing ${}^-\gamma$ and $\gamma^-$, swapping the roles of hooks and cohooks. 
\end{lemma}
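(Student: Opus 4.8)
The strategy is to unwind the construction $\gamma(w)$ from the proof of Theorem \ref{thm:StringsBijectCurves} and track exactly which segments $\rho_i$ change when we pass from $w$ to ${}_hw$, $w_h$, ${}_{c^{-1}}w$ or $w_{c^{-1}}$. By the symmetry built into the statement (left end versus right end of the string, hooks versus cohooks, $(+)$ versus $(-)$), it suffices to treat one case carefully, say item (1) for the start of the string, namely $\gamma({}_hw) = {}^+\gamma$; all the remaining cases follow by the same argument with the orientation reversed or with peaks and deeps interchanged. First I would fix a string $w = \alpha_1^{\epsilon_1}\cdots\alpha_m^{\epsilon_m}$ which does not start on a peak, and recall that adding a hook at $s(w)$ prepends $w'\beta$, where $\beta$ is the unique arrow with $\beta w$ a string and $w'$ is the maximal inverse string ending at $s(\beta)$ (Definition \ref{def:AddHookCohook}). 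The geometric content of ${}^+\gamma$ is that its initial endpoint is moved from $b := s(\gamma)$ to $b^+$, keeping everything else isotopic.

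The key step is a local analysis at the initial arc $\tau_{i_1} = \tau_{s(w)}$ and the triangle $\Delta_0$ on the other side of it (the triangle not containing $\rho_1$), using the three puzzle pieces of Table \ref{table:PuzzlePieces}. There are a handful of subcases depending on whether $\tau_{s(w)}$ is standard or pending and, when it is pending, whether the loop $\alpha_1$ appears as the first letter of $w$ or not — exactly the same trichotomy that organizes the construction of $\rho_0$ in the proof of Theorem \ref{thm:StringsBijectCurves}. In the generic (standard) case, $\gamma(w)$ begins with the segment $\rho_0$ running from the apex $v_0$ of $\Delta_0$ into $\tau_{s(w)}$; prepending the arrow $\beta$ amounts to continuing past $v_0$ into the next triangle, and prepending the maximal inverse string $w'$ records following the fan of triangles around the marked point $b$ until one reaches the boundary. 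I claim that the net effect of this fan traversal is precisely to slide the endpoint from $b$ to $b^+$: the maximality of $w'$ corresponds to going all the way around the fan at $b$ in the counterclockwise direction, and the segment that closes off the fan lands on the boundary arc incident to $b^+$. For the pending cases one uses the relations $\epsilon^2 \in I_T$, $\rho^2 \in I_T$ to see that a loop can occur at most once consecutively, so that the ``maximal inverse string'' either stops immediately or passes through the pending vertex exactly once; matching this against the pictures for $\rho_0$ drawn in the proof of Theorem \ref{thm:StringsBijectCurves} (including the winding picture when $\tau_{s(w)} \ne \tau_{t(\alpha_1^{\epsilon_1})}$) gives the claim there too.

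For item (2), where $w$ starts on a peak and is not a direct string, removing a cohook from the start is (by Definition \ref{def:RemoveHookCohook}, dualized) deleting the initial maximal \emph{inverse} run of $w$ together with one further letter; geometrically this shortens $\gamma$ at its initial end, and I would check — again triangle by triangle — that deleting exactly this portion of the string removes precisely the fan segments at $b$ that stood between $b$ and $b^+$, so that the resulting curve is ${}^+\gamma$. Here the hypothesis that $w$ is not a direct string is what guarantees the inverse run to be removed is nonempty, so the operation is defined and genuinely moves the endpoint. Finally, the ``moreover'' clause: replacing counterclockwise by clockwise throughout (i.e. using $b^-$, reading fans the other way, and swapping the roles of $\beta$ a direct arrow versus inverse arrow) turns ``adding a hook'' into ``adding a cohook'' and ``removing a cohook'' into ``removing a hook'', yielding the statements for ${}^-\gamma$ and $\gamma^-$; I would state this explicitly but only spell out the one extra sign flip rather than repeating the whole case analysis.

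\textbf{Main obstacle.} The delicate point is the pending-arc bookkeeping: making sure that ``maximal inverse string'' on the algebra side corresponds exactly to ``complete counterclockwise fan around the marked point, landing on the correct boundary arc'' on the orbifold side, uniformly across all three puzzle-piece types and including the self-intersecting winding configuration. This requires carefully invoking which length-two paths lie in $I_T$ (so that the extremal string cannot be prolonged) and matching that to the geometric fact that the fan terminates at the boundary; getting the orientation conventions consistent with those fixed in the proof of Theorem \ref{thm:StringsBijectCurves} is where almost all of the care goes, even though each individual verification is routine.
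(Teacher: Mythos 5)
Your proposal takes essentially the same approach as the paper: a local analysis at the starting end of the arc, built on the construction of $\gamma(w)$ from the proof of Theorem \ref{thm:StringsBijectCurves}, identifying the fan of arcs around a marked point that gets crossed or uncrossed when the endpoint slides from $b$ to $b^+$, and treating the pending-arc and winding configurations as separate subcases governed by the relations $\epsilon^2, \rho^2 \in I_T$. The only stylistic difference is direction: you start from the string operation (unwind what $w'\beta$ prepended to $w$ looks like geometrically, organizing by whether $w$ starts on a peak), whereas the paper starts from the geometry (analyze what ${}^+\gamma$ crosses, splitting on whether the clockwise neighbor of $\tau_1$ in $\Delta_0$ is a boundary edge, which determines whether one is in the hook case or the cohook-removal case) — these are the same argument read in opposite directions, and the paper handles both items (1) and (2) in a single case split rather than one at a time.

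One small warning for when you fill in the details: you state that the fan is ``at $b$'' traversed ``in the counterclockwise direction,'' while the paper describes it as a fan based at ${}^+s(\gamma)$ traversed clockwise from $\tau_a$. These must be reconciled against the orientation conventions fixed in Section~\ref{sec:GenCAFromOrb} (direct arrows = counterclockwise in a triangle) and in the construction of $\rho_0$; it is easy to flip a sign here, and your own ``Main obstacle'' paragraph correctly flags this as the place where the care actually goes.
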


\begin{proof}
Whether $\gamma = \gamma(w)$ starts on a peak is determined by the configuration around $\gamma$ before its first crossing with $T$. Let $\tau_1$ be the first arc which $\gamma$ crosses. If $\gamma$ passes through a triangle $\Delta_0$ before its first crossing with $T$, then $w$ starts on a peak if and only if the clockwise neighbor of $\tau_1$ in $\Delta_0$ is a boundary edge. If this arc is a boundary arc, then ${}^+\gamma$ no longer crosses $\tau_1$, nor $\tau_2,\ldots,\tau_k$ where $\tau_1,\ldots,\tau_k$ form a fan  (a set of arcs which pairwise border the same triangle and all share an endpoint) based at ${}^+ s(\gamma)$. This is true even if some of the arcs are pending arcs. Thus, we see that the arc $\gamma({}_{c^{-1}}w)$ associated to the string ${}_{c^{-1}}w$ is equivalent to the arc ${}^+\gamma$ up to isotopy. 

If the clockwise neighbor, $\tau_a$, of $\tau_1$ in $\Delta_0$ is not a boundary edge, then ${}^+ \gamma$ will cross this arc as well as all arcs which are based at ${}^+ s(\gamma)$ and follow $\tau_a$ in clockwise direction. Thus in this case, we see that ${}^+\gamma$ is isotopic to the arc $\gamma({}_hw)$ associated to ${}_hw$. 

Finally, suppose $\gamma$ winds around an orbifold point before its first intersection with $T$. The corresponding string $w_\gamma = \alpha_1^{\epsilon_1}\cdots \alpha_m^{\epsilon_m}$ has $s(w_\gamma)$ a vertex in $Q_T$ which is incident to a loop $\delta$, but $\alpha_1 \neq \delta$. Therefore, $w_\gamma$ will never start on a peak in this case since $\delta w_\gamma$ is a valid string. Similar to the previous discussion, ${}^+ \gamma$ will now cross the pending arc $\rho$ enclosing this orbifold point twice as well as the fan of edges which are clockwise of $\rho$. We see again that ${}^+\gamma$ and $ \gamma({}_h w)$ are equivalent.

The other three cases can be shown in a parallel manner. 
\end{proof}

Combining Lemmas \ref{lem:BRIrredMorphisms} and \ref{lem:RotateStringArcOnce} immediately gives a description of irreducible morphisms in a gentle algebra from an orbifold. 

\begin{proposition}\label{prop:IrredMorphOrbifold}
Let $T$ be a triangulation of an orbifold $\mathcal{O}$ with gentle pair $(Q_T,I_T)$. Let $w$ be a string and $\gamma = \gamma(w)$. Then, all irreducible morphisms from $M(\gamma)$ are of the form $M(\gamma) \to M(\gamma^+)$ and $M(\gamma) \to M({}^+ \gamma)$. 
\end{proposition}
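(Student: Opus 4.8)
The plan is to obtain this as an immediate consequence of Lemma~\ref{lem:BRIrredMorphisms} (the classification of irreducible morphisms for a gentle algebra) together with Lemma~\ref{lem:RotateStringArcOnce} (the translation of hook/cohook operations into endpoint rotations). Write $\gamma = \gamma(w)$, so that $M(\gamma) = M(w)$ for a string $w$ in $(Q_T,I_T)$. First I would invoke Lemma~\ref{lem:BRIrredMorphisms}, which lists, up to isomorphism, every irreducible morphism with source a string module $M(w)$: it is one of the canonical embeddings $M(w)\to M({}_hw)$ or $M(w)\to M(w_h)$ (occurring when $w$ does not start, respectively end, on a peak) or one of the canonical projections $M(w)\to M({}_{c^{-1}}w)$ or $M(w)\to M(w_{c^{-1}})$ (occurring when $w$ starts, respectively ends, on a peak and is not a direct, respectively inverse, string). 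Hence every irreducible morphism out of $M(\gamma)$ is one of at most two maps, one ``attached to $s(w)$'' and one ``attached to $t(w)$'', the alternatives at each end being mutually exclusive.

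Next I would identify each target using Lemma~\ref{lem:RotateStringArcOnce}. For the map attached to $s(w)$: if $w$ does not start on a peak, part~(1) gives $\gamma({}_hw) = {}^+\gamma$, while if $w$ starts on a peak and is not a direct string, part~(2) gives $\gamma({}_{c^{-1}}w) = {}^+\gamma$; since the assignment string $\mapsto$ arc is a bijection (Theorem~\ref{thm:StringsBijectCurves}, Corollary~\ref{cor:ArcsGiveIndecomp}), in both cases the target module is $M({}^+\gamma)$. Applying the parallel statements at the other end shows that the map attached to $t(w)$ has target $M(\gamma^+)$. Assembling the two ends, every irreducible morphism from $M(\gamma)$ is of the form $M(\gamma)\to M({}^+\gamma)$ or $M(\gamma)\to M(\gamma^+)$, which is the assertion.

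The one point needing a word of care is the degenerate behaviour. When $w$ starts on a peak \emph{and} is a direct string (or, at the other end, ends on a peak and is an inverse string) none of the four maps is attached to that end: the corresponding operation ${}_{c^{-1}}w$ (resp.\ $w_{c^{-1}}$) produces $0$, and correspondingly the rotated curve ${}^+\gamma$ (resp.\ $\gamma^+$) degenerates to a contractible or boundary-parallel curve rather than an arc, so the conclusion holds vacuously on that side. The potentially delicate special cases---the trivial string $w = e_v$, pending arcs, and strings whose arc winds once around an orbifold point---are already subsumed into the proof of Lemma~\ref{lem:RotateStringArcOnce}, so no new analysis is needed. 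The only ``obstacle'' is therefore bookkeeping: one must verify that the peak/direct-string case split in Lemma~\ref{lem:BRIrredMorphisms} matches, end by end, the case split in Lemma~\ref{lem:RotateStringArcOnce}, so that the two-sided assembly is exhaustive; there is no genuinely new geometric or homological input.
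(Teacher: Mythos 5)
Your argument is exactly the one the paper intends: Proposition~\ref{prop:IrredMorphOrbifold} is presented there without a written proof as an ``immediate'' consequence of combining Lemma~\ref{lem:BRIrredMorphisms} with Lemma~\ref{lem:RotateStringArcOnce}, and you have correctly spelled out that combination, including the end-by-end matching of the hook/cohook case split to the rotation statements. The only minor remark is that your geometric gloss on the degenerate direct/inverse case (that ${}^+\gamma$ or $\gamma^+$ becomes contractible or boundary-parallel) is not actually needed and is not established in the paper---what suffices, as you also note, is that Lemma~\ref{lem:BRIrredMorphisms} produces no irreducible morphism attached to that end, so the asserted containment holds vacuously there.
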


Butler and Ringel also provide a combinatorial description of Auslander-Reiten translation for string modules in \cite{butler1987auslander}. Let $w$ be a string for a gentle pair such that $M(w)$ is not projective.  Then, $\tau M(w) = M(w')$ where $w'$ depends on whether $w$ starts or ends on a deep. If $w$ starts on a deep, we remove a hook from the beginning of $w$; otherwise, we add a cohook to the beginning. If we then repeat this at the end of $w$, we reach $w'$; the fact that $M(w)$ is not projective guarantees that this process is well-defined. There is a parallel procedure for finding the string associated to $\tau^{-1}M(w)$.

 This result along with Lemma \ref{lem:RotateStringArcOnce} yields a description of how Auslander-Reiten translation acts on the level of arcs on $\mathcal{O}$. 

\begin{theorem}\label{thm:ARTranslation}
Let $T$ be a triangulation of an orbifold $\mathcal{O}$ with gentle pair $(Q_T,I_T)$. Let $w$ be a string and $\gamma = \gamma(w)$. Then, if $M(\gamma)$ is not projective, $\tau M(\gamma) = M({}^- \gamma^-)$ and if $M(\gamma)$ is not injective, $\tau^{-1} M(\gamma) = M({}^+ \gamma^+)$.
\end{theorem}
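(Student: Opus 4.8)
The plan is to reduce the statement entirely to the two ingredients already assembled: the combinatorial description of Auslander–Reiten translation for string modules over a gentle algebra due to Butler and Ringel (recalled in the paragraph preceding the theorem), and Lemma \ref{lem:RotateStringArcOnce}, which translates ``add a cohook / remove a hook'' at either end of a string into the geometric operations $\gamma \mapsto {}^-\gamma$ and $\gamma \mapsto \gamma^-$ on the associated arc. First I would set up notation: fix a string $w$ with $\gamma = \gamma(w)$ and assume $M(\gamma) = M(w)$ is not projective. Following Butler–Ringel, $\tau M(w) = M(w')$ where $w'$ is produced by a two-step recipe — at the \emph{start} of $w$, remove a hook if $w$ starts on a deep, otherwise add a cohook; then at the \emph{end} of $w$, remove a hook if $w$ ends on a deep, otherwise add a cohook — and the non-projectivity of $M(w)$ is exactly what guarantees this is well-defined (in particular that the relevant hooks to remove actually exist).

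Next I would handle the start of the string. If $w$ does not start on a deep, then by Definition \ref{def:PeakAndDeep} and the hook/cohook formalism, $w$ either starts on a peak or can be extended; in the language of Lemma \ref{lem:RotateStringArcOnce}, the Butler–Ringel operation ``add a cohook at $s(w)$'' is precisely the operation ${}_{c}w$, and the dual ``remove a hook at $s(w)$'', applied when $w$ starts on a deep, is ${}_{h^{-1}}w$. The key point is that both of these correspond, on the geometric side, to replacing $s(\gamma)$ by $s(\gamma)^-$: this is exactly the content of the ``$-$'' version of Lemma \ref{lem:RotateStringArcOnce}, obtained by swapping hooks and cohooks there. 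Concretely, in case (1) of that lemma (adapted to ``$-$'') removing a hook gives ${}^-\gamma$, and in case (2) adding a cohook gives ${}^-\gamma$; together these cover both branches of the Butler–Ringel recipe at the start. So regardless of whether $w$ starts on a deep, the first step sends $\gamma(w)$ to ${}^-\gamma$. Applying the identical argument to the end of the string — using the ``$\gamma^-$'' statements of Lemma \ref{lem:RotateStringArcOnce} — the second step sends ${}^-\gamma$ to ${}^-\gamma^-$. Hence $\tau M(\gamma) = M({}^-\gamma^-)$. The statement for $\tau^{-1}$ is the mirror image: $\tau^{-1}$ for string modules is described by the dual recipe (add a hook / remove a cohook at each end), which by the ``$+$'' half of Lemma \ref{lem:RotateStringArcOnce} corresponds to $\gamma \mapsto {}^+\gamma$ and $\gamma \mapsto \gamma^+$, giving $\tau^{-1}M(\gamma) = M({}^+\gamma^+)$ whenever $M(\gamma)$ is not injective.

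I would also spend a sentence checking the boundary/degenerate cases, since these are where a careless reduction goes wrong: when $w = e_v$ is a trivial string, when the hook or cohook added is just a single arrow (the $w'$-trivial case flagged after Definition \ref{def:AddHookCohook}), and when the arc $\gamma$ winds around an orbifold point at one of its ends (the loop case in the proof of Theorem \ref{thm:StringsBijectCurves} and in the last paragraph of the proof of Lemma \ref{lem:RotateStringArcOnce}). In each of these, Lemma \ref{lem:RotateStringArcOnce} was already stated to cover the situation — e.g. a string starting at a loop vertex never starts on a peak, so the cohook case is vacuous there and one is always in the hook branch — so the reduction goes through verbatim; I would simply point to those parts of the earlier proofs rather than redo them.

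The main obstacle I anticipate is not a deep one but a bookkeeping one: making sure the two-sided nature of the Butler–Ringel operation composes correctly with the one-endpoint-at-a-time statement of Lemma \ref{lem:RotateStringArcOnce}. Specifically, after modifying the start of $w$ to get a string $\tilde w$ with $\gamma(\tilde w) = {}^-\gamma$, one must verify that the operation applied to the \emph{end} of $\tilde w$ (remove a hook vs.\ add a cohook) is governed by whether $\tilde w$ ends on a deep, and that this coincides with whether the \emph{original} $w$ ended on a deep — i.e.\ that modifying the left end does not change the peak/deep status at the right end. This is clear because the hook/cohook operations at $s(w)$ only alter letters strictly to the left of the rightmost original letter, hence leave $t(w)$ and the final letters untouched; I would state this explicitly as a short observation before composing the two steps. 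Once that is in place, the theorem follows immediately by composing the two applications of Lemma \ref{lem:RotateStringArcOnce}.
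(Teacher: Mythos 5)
Your proof is correct and follows exactly the route the paper intends: the paper presents Theorem \ref{thm:ARTranslation} as an immediate consequence of the Butler--Ringel description of $\tau$ for string modules together with Lemma \ref{lem:RotateStringArcOnce}, and your elaboration of the two endpoint steps, the observation that a modification at one end leaves the peak/deep status at the other end unchanged, and the degenerate-case checks are all sound. (One cosmetic slip: your cases (1) and (2) of the ``$-$'' version of Lemma \ref{lem:RotateStringArcOnce} are interchanged---case (1), not starting on a deep, is adding a cohook ${}_cw$, and case (2), starting on a deep with $w$ not inverse, is removing a hook ${}_{h^{-1}}w$---but since both branches land on ${}^-\gamma$ the argument is unaffected.)
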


A string module $M(w)$ is projective only if there is a unique vertex at the top of $w$ (see Definition \ref{def:TopAndBottom}) and $w$ starts and ends on a deep; the conditions for an injective string module are parallel. From the description of an arc $\gamma(w)$ from a string $w$, we can see that $M(\gamma)$ is projective if $\gamma = {}^+\tau^+$ for some $\tau \in T$.

When an orbifold has no orbifold points, so that it is in fact a surface, versions of Proposition \ref{prop:IrredMorphOrbifold} and Theorem \ref{thm:ARTranslation} in the context of the cluster category are given in \cite{brustle2011cluster}.

\section{Caldero-Chapoton Map}\label{sec:CCMap}

Caldero and Chapoton described a map from the category of representations of $Q$ to the cluster algebra corresponding to $Q$ when $Q$ is an ADE type quiver \cite{CCMap}. This map was generalized to apply to modules over any basic algebra in \cite{cerulli2015caldero}. In this section, we compare the application of this map to a string module $M(w_\gamma)$ or band module $M(\overline{w}_\xi)$ with the snake graph expansion formula applied to an arc $\gamma$ or closed curve $\xi$ respectively. We remark that a similar result in the case of a cluster algebra from a surface is given in Section 11 of \cite{geiss2022schemes}.

\subsection{Quiver Grassmannians}

Given a basic algebra $\Lambda = KQ/I$, let $M \in \text{rep}(Q,I)$. Recall the dimension vector of $M$, $\dim(M)$ is given by $\mathbf{d} = (d_1,\ldots,d_n)$ where $d_i = \dim(M_i)$ and $n = \vert Q_0 \vert$. For $\mathbf{e} \in \mathbb{N}^{n}$, we define $\text{Gr}_{\mathbf{e}}(M)$ to be the variety of subrepresentations of $M$ with dimension vector $\mathbf{e}$. Let $\chi$ be the Euler-Poincar\'e characteristic. 

By Theorem 1.2.a and Corollary 3.1 from \cite{haupt2012euler}, given a string or band module $M$, we can compute $\chi (\text{Gr}_{\mathbf{e}}(M))$ by counting the canonical string submodules of $M$ with this dimension vector. When $M$ is a string module, {\c{C}}anak{\c{c}}i and Schroll show we can equivalently compute $\chi(\text{Gr}_{\mathbf{e}}(M))$ by counting certain perfect matchings of a corresponding snake graph. They do this by comparing the lattice of canoncial submodules of $M(w)$ and the lattice of perfect matching of a snake graph $\mathcal{G}(w)$ associated to $w$. We review the definition of $\mathcal{G}(w)$. 

If $w = e_v$, $\mathcal{G}(w)$ will have 1 tile, $G_1$, with label $v$. Now, let $w = \alpha_1^{\epsilon_1} \cdots \alpha_m^{\epsilon_m}$ be a string for $m \geq 1$. The snake graph $\mathcal{G}(w)$ will have $m+1$ tiles $G_1,\ldots,G_{m+1}$, where $G_i$ will have label $w(i)$. If $\epsilon_1 = +$, we glue $G_2$ on the north edge of $G_1$; otherwise, $\epsilon_1 = -$ and we glue $G_2$ on the east edge of $G_1$. Then, for all $2 \leq i \leq m$, we glue $G_{i+1}$ onto either the north or the east edge of $G_i$ so that $G_{i-1}, G_i$ and $G_{i+1}$ share a vertex if and only if $\epsilon_{i-1} = \epsilon_i$. We do not include edge labels to $\mathcal{G}(w)$. This is the inverse of the description in Section \ref{sec:LatticePMs} for the partial order $P$ associated to a snake graph $\mathcal{G}$ such that the lattice of order ideals of $P$ is isomorphic to the lattice of perfect matchings of $\mathcal{G}$. 
For example, we give the snake graph $\mathcal{G}(w)$ associated to a string of the form $w = \alpha_1 \alpha_2 \alpha_3^{-1} \alpha_4^{-1} \alpha_5$. The tile labels would follow from the labels of the vertices of $w(1),\ldots,w(6)$.

\begin{center}
\begin{tikzpicture}[scale = 0.7]
\draw (0,0) -- (1,0) -- (1,1) -- (3,1) -- (3,4) -- (2,4) -- (2,2) -- (0,2) -- (0,0);
\draw (0,1) -- (1,1) -- (1,2);
\draw (2,1) -- (2,2) -- (3,2);
\draw (2,3) -- (3,3);
\end{tikzpicture}
\end{center}


\begin{theorem}[Theorem 3.18 \cite{ccanakcci2021lattice}]\label{thm:CaScLatticeBij}
Given a string $w$ with snake graph $\mathcal{G}(w)$, there is a lattice bijection between the lattice of perfect matchings of $\mathcal{G}(w)$ and the canonical submodule lattice of $M(w)$. 
\end{theorem}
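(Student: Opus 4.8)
The plan is to exhibit one partial order $\mathcal Q_w$ on $\{1,\dots,m+1\}$ whose lattice of order ideals is simultaneously the canonical submodule lattice of $M(w)$ and the perfect matching lattice of $\mathcal G(w)$, and then to read off the bijection. I would start on the module side. Write $w=\alpha_1^{\epsilon_1}\cdots\alpha_m^{\epsilon_m}$ and let $b_1,\dots,b_{m+1}$ be the standard basis of $M(w)$, with $b_i$ sitting at the vertex $w(i)$. The representation map attached to $\alpha_i$ displaces mass only between the two consecutive basis vectors $b_i$ and $b_{i+1}$: it sends $b_i\mapsto b_{i+1}$ if $\epsilon_i=+$ and $b_{i+1}\mapsto b_i$ if $\epsilon_i=-$. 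Hence a subset $S\subseteq\{1,\dots,m+1\}$ spans a canonical submodule exactly when it is closed under the implications ``$i\in S\Rightarrow i+1\in S$'' (when $\epsilon_i=+$) and ``$i+1\in S\Rightarrow i\in S$'' (when $\epsilon_i=-$). Since these implications only ever relate consecutive indices, no directed cycles appear, and the closed subsets are precisely the order ideals of the ``fence'' poset $\mathcal Q_w$ on $\{1,\dots,m+1\}$ in which $i+1\lessdot i$ when $\epsilon_i=+$ and $i\lessdot i+1$ when $\epsilon_i=-$. So the canonical submodule lattice is the order-ideal lattice $J(\mathcal Q_w)$, graded by cardinality, which is the dimension of the corresponding submodule; and in the language of Definition~\ref{def:TopAndBottom} the minimal elements of $\mathcal Q_w$ are exactly the deeps of $w$, so $\mathcal Q_w$ is literally the ``zig-zag'' picture of $w$.

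On the matching side, Theorem~\ref{Thm:PMPoset}, applied to $\mathcal G(w)$ with the per-tile height vectors of Section~\ref{sec:LatticePMs}, says the perfect matching lattice of $\mathcal G(w)$ is $J(\mathcal P_{\mathcal G(w)})$, also graded by the height vector, i.e.\ by the number of enclosed tiles. So it suffices to identify $\mathcal P_{\mathcal G(w)}$ with $\mathcal Q_w$ under the labeling $G_i\leftrightarrow w(i)$. Here I use that the construction of $\mathcal G(w)$ in Section~\ref{sec:CCMap} is, tile by tile, the inverse of the recipe $\mathcal G\mapsto\mathcal P_{\mathcal G}$: the tile $G_{i+1}$ is glued north or east of $G_i$ according to $\epsilon_i$, and $G_{i-1},G_i,G_{i+1}$ form a zig-zag exactly when $\epsilon_{i-1}=\epsilon_i$. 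Feeding this into the cover-relation rule defining $\mathcal P_{\mathcal G}$ yields a recursion for the direction of the cover between $i$ and $i+1$ (a zig-zag step copies the direction of the previous cover, a straight step reverses it); a one-line induction telescopes this to show that the cover between $i$ and $i+1$ in $\mathcal P_{\mathcal G(w)}$ is controlled by the single letter $\epsilon_i$ alone — the same rule that defines $\mathcal Q_w$. Thus $\mathcal P_{\mathcal G(w)}$ and $\mathcal Q_w$ coincide, the only subtlety being to keep the top/bottom convention of each lattice straight so that one lands on a genuine lattice isomorphism (the pairing ``enclosed tiles'' $\leftrightarrow$ ``support of the submodule'' fixes this).

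Combining the two steps, both lattices are $J(\mathcal Q_w)$, and unwinding the identifications the isomorphism sends a perfect matching $P$ to the canonical submodule of $M(w)$ supported on $\{\,w(j):G_j\text{ is enclosed by }P\ominus P_-\,\}$, with $P_-\mapsto 0$ and $P_+\mapsto M(w)$; it is a lattice map because the order is containment on both sides, the gradings agree, and a cover on either side — a twist at a twistable tile, versus adjoining one basis vector $b_j$ — corresponds under the bijection (Lemma~\ref{lem:twistparity} even pins down which twists move up, although that refinement is not needed here). The work is entirely the bookkeeping of the middle step: translating ``north/east'' and ``zig-zag/straight line'' into the letters $\epsilon_i$ while keeping the grading and the orientation conventions consistent. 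The only other point meriting a word of care is the first step's claim that the canonical submodules are \emph{exactly} the order ideals of $\mathcal Q_w$, which is true precisely because each representation map supported on $\alpha_i$ moves mass only between the consecutive vectors $b_i$ and $b_{i+1}$, so no longer-range closure conditions intervene.
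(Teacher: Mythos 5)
The paper does not actually prove this statement --- it is imported verbatim as Theorem~3.18 from \cite{ccanakcci2021lattice} with no proof given. So there is no in-paper argument to compare against; I'm evaluating your reconstruction on its own terms.

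Your overall strategy --- exhibit one fence poset $\mathcal{Q}_w$ on $\{1,\dots,m+1\}$ whose order-ideal lattice is simultaneously the canonical submodule lattice of $M(w)$ and (via Theorem~\ref{Thm:PMPoset}) the perfect-matching lattice of $\mathcal{G}(w)$ --- is the right one and presumably close to the original source's. The module side is correct as written: direct $\alpha_i$ gives the implication $i\in S\Rightarrow i+1\in S$ for a basis-coordinate subset to be a subrepresentation, inverse $\alpha_i$ gives the reverse, and since each implication links only consecutive indices the closed subsets really are exactly the order ideals of your fence $\mathcal{Q}_w$, graded by cardinality (= dimension vector). Your identification of the minimal elements of $\mathcal{Q}_w$ with the deeps of $w$ checks out against Definition~\ref{def:TopAndBottom}.

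The one place that deserves more than a hand-wave is the step you flag as ``the only subtlety'': showing $\mathcal{P}_{\mathcal{G}(w)} = \mathcal{Q}_w$. If you feed the gluing rule for $\mathcal{G}(w)$ into the cover-relation rule for $\mathcal{P}_{\mathcal{G}}$ exactly as the paper states it (``if $G_2$ is glued on the north edge of $G_1$, set $1\lessdot 2$''), your base case comes out \emph{opposite} to $\mathcal{Q}_w$: $\epsilon_1=+$ puts $G_2$ north, hence $1\lessdot 2$, whereas $\mathcal{Q}_w$ has $2\lessdot 1$. The ``one-line induction'' then propagates the wrong base case and produces $\mathcal{Q}_w^{\mathrm{op}}$ rather than $\mathcal{Q}_w$, and $J(P)$ and $J(P^{\mathrm{op}})$ are only anti-isomorphic, which is not what the theorem asserts. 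The underlying reason is that the paper's stated formula for $\mathcal{P}_{\mathcal{G}}$ is internally inconsistent with its own Example~\ref{ex:SnakeAndPoset}: there $G_2$ is glued east of $G_1$ (so the formula would give $1\gtrdot 2$), yet the displayed height-vector lattice forces $1$ to be minimal (since $(1,0,0,0)$ occurs), i.e.\ $1\lessdot 2$. Your $\mathcal{Q}_w$ agrees with the example and the height-vector data, which is what matters; but your ``derivation'' of it from the paper's formula would not, so the argument as written has a sign error hiding in the middle. The clean fix is to anchor the convention independently: a tile $G_j$ is twistable from $P_-$ iff $\{j\}$ is an order ideal iff $j$ is minimal in the poset, and a direct check on the two-tile snake graph of a one-letter direct string (or an invocation of Lemma~\ref{lem:twistparity}) shows the northern tile $G_2$ is the one twistable from $P_-$, pinning the orientation and confirming your $\mathcal{Q}_w$, not its opposite. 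With that anchor in place the rest of your argument (telescoping the zig-zag/straight recursion, matching gradings, reading off the explicit bijection $P\mapsto$ submodule supported on the enclosed tiles) goes through as stated.
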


We remark that this bijection will send a perfect matching $P$ with height vector $\mathbf{h}(P)$ to a submodule $M$ with $\dim(M) = \mathbf{h}(P)$. 

\begin{corollary}[Corollary 3.10 \cite{ccanakcci2021lattice}, Remark 5.5 \cite{MSW-bases}]\label{cor:CaScEulerChar}
Let $w$ be a string for a gentle pair $(Q,I)$. Then, $\chi(\text{Gr}_{\mathbf{e}}(M(w)))$ is equal to the number of perfect matchings of $\mathcal{G}(w)$ with height vector $\mathbf{e}$. 
\end{corollary}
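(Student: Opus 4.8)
The plan is to obtain this as an immediate consequence of the two results recalled just above, with essentially no new work. First I would invoke the cited theorem of Haupt (Theorem 1.2.a and Corollary 3.1 of \cite{haupt2012euler}): since $M(w)$ is a string module over the gentle algebra $KQ/I$, there is a torus action on $\text{Gr}_{\mathbf{e}}(M(w))$ whose induced decomposition computes the Euler--Poincar\'e characteristic, so that $\chi(\text{Gr}_{\mathbf{e}}(M(w)))$ equals the number of canonical (string) submodules of $M(w)$ with dimension vector $\mathbf{e}$. This replaces the topological quantity on the left-hand side by a purely combinatorial count of substrings of $w$.

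Next I would apply Theorem~\ref{thm:CaScLatticeBij}, the \canakci--Schroll lattice isomorphism between the lattice of perfect matchings of $\mathcal{G}(w)$ and the canonical submodule lattice of $M(w)$. The crucial observation is that this bijection is graded in the appropriate sense: as recorded in the remark immediately following Theorem~\ref{thm:CaScLatticeBij}, a perfect matching $P$ with height vector $\mathbf{h}(P)$ corresponds to a canonical submodule $M$ with $\dim(M) = \mathbf{h}(P)$. Restricting the bijection to the set of elements with a fixed invariant $\mathbf{e}$ then gives a bijection between the perfect matchings of $\mathcal{G}(w)$ with height vector $\mathbf{e}$ and the canonical submodules of $M(w)$ with dimension vector $\mathbf{e}$, hence an equality of their cardinalities.

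Chaining the two identities gives $\chi(\text{Gr}_{\mathbf{e}}(M(w)))$ equal to the number of canonical submodules of $M(w)$ of dimension $\mathbf{e}$, equal to the number of perfect matchings of $\mathcal{G}(w)$ with height vector $\mathbf{e}$, which is the assertion. The trivial string case $w = e_v$ should be handled separately but is transparent: $\mathcal{G}(e_v)$ is a single edge labeled $v$ and $M(e_v)$ is the simple module at $v$, so both sides equal $1$ when $\mathbf{e}$ is $0$ or the standard basis vector at $v$, and $0$ otherwise. The only non-formal ingredient is the grading compatibility of the \canakci--Schroll bijection, and since that is exactly the content of the remark after Theorem~\ref{thm:CaScLatticeBij}, I do not expect any real obstacle; the proof is a two-line citation argument once those statements are in place.
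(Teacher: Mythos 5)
Your proof is correct and is exactly the route the paper intends: invoke Haupt to turn $\chi(\text{Gr}_{\mathbf{e}}(M(w)))$ into a count of canonical string submodules of dimension $\mathbf{e}$, then use the graded Çanakçı--Schroll bijection of Theorem~\ref{thm:CaScLatticeBij} (with the remark that $\mathbf{h}(P)=\dim$ of the corresponding submodule) to convert that count into a count of perfect matchings of $\mathcal{G}(w)$ with height vector $\mathbf{e}$. One small slip in your base case: for the trivial string $e_v$, the paper defines $\mathcal{G}(e_v)$ as a single \emph{tile} (not a single edge), which is in fact what your stated conclusion requires, since a lone edge would have only one perfect matching rather than the two (height vectors $0$ and $e_v$) you correctly count.
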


We provide an analogous result for band modules of the form $M(\overline{w},\lambda,1)$. This was also given as Remark 5.8 in \cite{MSW-bases}, but we work abstractly without associating the band to a closed curve. Thus, our result can be applied to algebras other than those associated to triangulated surfaces and orbifolds.  

First, we describe the band graph $\widetilde{\mathcal{G}}(\overline{w})$ associated to a band $\overline{w} = \alpha_1^{\epsilon_1} \cdots \alpha_m^{\epsilon_m}$. Given a tile $G_i$ of a snake graph, let $N(G_i)$ denote the north edge of $G_i$ and similarly for $E(G_i),S(G_i),$ and $W(G_i)$. Let $\mathcal{G} = \mathcal{G}(\alpha_1^{\epsilon_1} \cdots \alpha_{m-1}^{\epsilon_m})$ have tiles $G_1,\ldots,G_m$. We form  $\widetilde{\mathcal{G}}(\overline{w})$ by taking $\mathcal{G}$ and gluing $G_1$ and $G_m$, where the specific edges glued depend on the parity of $m$ and the sign of $\epsilon_m$:
\begin{enumerate}
    \item If $\epsilon_m = +$ and $m$ is even, glue $W(G_1)$ to $E(G_m)$.
    \item If $\epsilon_m = +$ and $m$ is odd, glue $W(G_1)$ to $N(G_m)$.
    \item If $\epsilon_m = -$ and $m$ is even, glue $S(G_1)$ to $N(G_m)$.
    \item If $\epsilon_m = -$ and $m$ is odd, glue $S(G_1)$ to $E(G_m)$.
\end{enumerate}

\begin{theorem}\label{thm:BandMatchingSubmoduleBijection}
Given a band $\overline{w}$ from a gentle pair $(Q,I)$ with associated band graph $\widetilde{\mathcal{G}}(\overline{w})$, there is a lattice bijection between the lattice of good matchings of $\widetilde{\mathcal{G}}(\overline{w})$ and the canonical submodule lattice of $ M(\overline{w}, \lambda,1)$ for $\lambda \in K^\times$. 
\end{theorem}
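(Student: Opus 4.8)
The plan is to reduce the band-graph statement to the already-established string-graph statement of Theorem~\ref{thm:CaScLatticeBij} by cutting. Given the band $\overline{w} = \alpha_1^{\epsilon_1}\cdots\alpha_m^{\epsilon_m}$, I would first form the string $w = \alpha_1^{\epsilon_1}\cdots\alpha_{m-1}^{\epsilon_{m-1}}$ obtained by deleting the last letter, so that $\mathcal{G}(w)$ is the snake graph with tiles $G_1,\ldots,G_m$ which, upon gluing $G_1$ and $G_m$ according to the four parity/sign cases listed above, yields $\widetilde{\mathcal{G}}(\overline{w})$. By the discussion preceding Definition~\ref{def:GoodMatching}, the good matchings of $\widetilde{\mathcal{G}}(\overline{w})$ are in bijection with the perfect matchings of $\mathcal{G}(w)$ that contain one of the two cut edges $(x,y)$ or $(x',y')$; under the lattice bijection of Theorem~\ref{thm:CaScLatticeBij}, I want to identify exactly which canonical submodules of $M(w)$ these correspond to, and then match those against the canonical submodules of $M(\overline{w},\lambda,1)$.

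The key observation is that a submodule of the band module $M(\overline{w},\lambda,1)$ is, as a subspace, supported on a union of intervals of vertices of $\overline{w}$, and since $\lambda \neq 0$ the Jordan block (here just multiplication by $\lambda$) forces the "last" arrow $\alpha_m$ to behave like an ordinary identity map as far as the combinatorics of canonical submodules goes — the only constraint it adds is that the support cannot "wrap around" through $\alpha_m$ in a way incompatible with the other arrows, which is precisely the statement that a canonical submodule of $M(\overline{w},\lambda,1)$ is a canonical submodule of $M(w)$ that is "closed across the cut", i.e. either contains both endpoints of the identified edge or neither on a single side. Concretely, I would set up the correspondence: canonical submodules of $M(\overline{w},\lambda,1) \leftrightarrow$ canonical submodules $N$ of $M(w)$ such that the restriction of $N$ at $w(1)$ and at $w(m)=w(m{+}1)$ "agree through $\alpha_m$", and then check that this is exactly the image under Theorem~\ref{thm:CaScLatticeBij} of the set of perfect matchings of $\mathcal{G}(w)$ using one of the cut edges. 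The dimension-vector bookkeeping (Remark after Theorem~\ref{thm:CaScLatticeBij}) transfers verbatim: the height vector of the matching equals the dimension vector of the submodule, and combining contributions from $G_1$ and $G_m$ corresponds to the vertex identification in $\widetilde{\mathcal{G}}(\overline{w})$.

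For the lattice structure, I would argue that both posets in question are full subposets of the lattice $L(\mathcal{G}(w))$ of perfect matchings of the cut snake graph: on the matching side this is literally the definition of good matchings via the cut edge, and on the module side I would show that the canonical submodule lattice of $M(\overline{w},\lambda,1)$ embeds as a sublattice of the canonical submodule lattice of $M(w)$ under the bijection of Theorem~\ref{thm:CaScLatticeBij}, with the same meet and join (intersection and sum of submodules, which are again canonical). Since $L(\mathcal{G}(w))$ is a distributive lattice (Theorem~\ref{Thm:PMPoset}) and the good-matching sublattice is precisely the one appearing in Theorem~\ref{thm:GMPoset} with quiver $\mathcal{P}_{\widetilde{\mathcal{G}}}$ obtained by adding one arrow between $1$ and $d$, it suffices to check that the two subposets of $L(\mathcal{G}(w))$ coincide as sets; the lattice isomorphism then follows automatically.

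The main obstacle I anticipate is the careful case analysis in matching up the four gluing cases (parity of $m$, sign of $\epsilon_m$) with the behavior of the last arrow $\alpha_m$ of the band: one must verify that in each case the cut edge $(x,y)$ of $\mathcal{G}(w)$ is the edge whose two perfect-matching states correspond exactly to "the support of the submodule does / does not pass through $\alpha_m$", and in particular that the twist-parity condition (Lemma~\ref{lem:twistparity}), which controls which edge orientations move up the lattice, is consistent with the orientation $\epsilon_m$ of the closing arrow — this is exactly the place where one uses that the twist-parity statement for band graphs replaces odd/even indexed tiles by tiles of relative orientation $+1/-1$. Everything else is routine transfer along the bijection of Theorem~\ref{thm:CaScLatticeBij} together with the standard fact that for band modules with $n=1$ and $\lambda\neq 0$ the canonical submodules are exactly the "interval" submodules that do not get obstructed by the nonzero scalar on the cyclic arrow.
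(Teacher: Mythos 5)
Your plan is correct and is essentially the same strategy as the paper: cut the band graph to the snake graph $\mathcal{G}(w')$ where $w'$ is obtained by dropping the last letter of $\overline{w}$, characterize the good matchings as those perfect matchings of $\mathcal{G}(w')$ using a cut edge and the admissible submodules as those canonical submodules $N$ of $M(w')$ satisfying $z_1\in N\Rightarrow z_m\in N$, and then invoke the lattice bijection of Theorem~\ref{thm:CaScLatticeBij}. The one cosmetic difference is that you work with the ``good'' sets and the observation that both sit as sublattices of $L(\mathcal{G}(w'))$, whereas the paper works with the complementary ``bad'' sets and shows explicitly that they form an interval in the lattice; your sublattice observation lets you skip that interval description, but the remaining content is identical. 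The step you acknowledge but do not carry out---checking, case by case in the parity/sign of the gluing, that the set of cut-edge matchings is precisely the $\c{C}$anak{\c{c}}{\i}--Schroll image of the submodules satisfying the wrap-around condition---is exactly where the paper spends its effort, including the normalization $\epsilon_m=-$ (possible since a band over a gentle pair cannot be entirely direct or inverse) and the two degenerate cases where the first deep and last peak are adjacent or $w'$ is a direct string. Those edge cases in particular are easy to overlook and worth flagging in a finished write-up.
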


\begin{proof}
Given a band $\overline{w} = \alpha_1^{\epsilon_1}\cdots \alpha_m^{\epsilon_m}$, let $w'$ be the string $\alpha_1^{\epsilon_1}\cdots \alpha_{m-1}^{\epsilon_{m-1}}$. Since we assume $\overline{w}$ came from a gentle pair, $\overline{w}$ cannot have all direct or inverse arrows. Therefore, we may assume without of loss of generality that $\epsilon_m = -$. For convenience, let $m$ be even; the odd $m$ case is analogous. 

Let $\widetilde{\mathcal{G}} = \widetilde{\mathcal{G}}(\overline{w})$ and $\mathcal{G} = \mathcal{G}(w')$. From Theorem \ref{thm:CaScLatticeBij}, we know there is a lattice bijection between the lattice of perfect matchings of $\mathcal{G}$ and the lattice of canonical submodules of $M(w')$.  Recall some perfect matchings of $\mathcal{G}(w')$ descend to good matchings of $\mathcal{G}(w)$; these are those which use at least one of the edges in $\mathcal{G}$ which are glued to form $\widetilde{\mathcal{G}}$. Similarly, some of the submodules of $M(w')$ can also be viewed as submodules of $M(\overline{w})$. Since we assume $\epsilon_m = -$, these submodules are those such that, if the basis element $z_{1} \in M(w')_{s(\alpha_m)}$ associated to the first vertex of $w$ is included in the submodule, so is $z_{m} \in M(w')_{t(\alpha_m)}$, the basis element associated to the last vertex of $w$.

Our method of proof will be to show that the bijection of \canakci and Schroll restricts to a bijection between the perfect matchings of $\mathcal{G}$ which do not lift to good matchings and the submodules of $M(w')$ which are not also submodules of $M(\overline{w})$. We call these \emph{bad matchings} and \emph{bad submodules} respectively. We will see that each of these sets form an interval in the corresponding lattice. The consequence will be that we also have a bijection between the lattice of good matchings of $\widetilde{\mathcal{G}}$ and the lattice of submodules of $M(\overline{w})$. 

Let $z_1,\ldots,z_m$ be the canonical basis elements of $M(w')$. The set of bad submodules of $M(w')$ consists of all submodules containing $z_1$ and not containing $z_m$. First, assume $w'$ is not a direct string. Let $c_1$ be the first deep in $w'$ and let $d_\ell$ be the last peak in $w'$; it is possible that $c_1 = 1$ and $d_\ell = m$. Assume first $c_1+1 < d_\ell$.  Then, the set of bad submodules forms an interval in the submodule lattice and is in bijection with the set of canonical submodules of the string $\alpha_{c_1+1}^{\epsilon_{c_1}+1} \cdots \alpha_{d_\ell-2}^{\epsilon_{d_\ell-2}}$, where if $c_1+1 = d_\ell-1$, this is the length 0 string $e_{s(\alpha_{c_1})} = e_{t(\alpha_{d_{\ell-1}})}$. This is true because, in order to have $z_1$ as a basis element in a submodule, we must also include $z_2,\ldots,z_{c_1}$ as basis elements, and if $z_m$ is not a basis element, we cannot have any of the elements $z_{d_\ell},\ldots,z_{m-1}$.

We compare this with the set of perfect matchings of $\mathcal{G}$ which use neither $S(G_1)$ nor $N(G_m)$. Since $S(G_1)$ is in the minimal matching by assumption, in order to reach any bad matching from the minimal matching, we must twist $G_1$. Either $G_2$ is glued to the north edge of $G_1$, which means $G_1$ is twistable in the minimal matching and $c_1 = 1$,  or $ c_1 > 1$ and by Lemma \ref{lem:ZigZagFlip}  in order to twist $G_1$ we must first twist $G_{c_1} G_{c_1-1},\ldots,G_2,G_1$ in this order. From Lemma \ref{lem:twistparity} and our assumption that $m$ is even, we know that $N(G_m)$ is in the minimal matching of $\mathcal{G}$, so in order to have a bad matching we cannot twist $G_m$. Using the same logic as Lemma \ref{lem:ZigZagFlip}, this means we also cannot twist tiles $G_{d_\ell}, G_{d_\ell+1},\ldots,G_{m-1}$. Note that, since $w'$ is not a direct or inverse string, $\mathcal{G}$ will not be a zig-zag shape, so these conditions will not interfere with each other. Such a matching will be a bad matching regardless of which edges it includes from the subgraph on tiles $G_{c_1+1},\ldots,G_{d_\ell-1}$, so the bad matchings form an interval which is isomorphic to the lattice of all perfect matchings of this subgraph.  By Theorem \ref{thm:CaScLatticeBij}, this interval is isomorphic to the interval of bad submodules described previously.

Now, we address the special cases. First, suppose $w'$ is not a direct string and $c_1+1 = d_\ell$. Then, there is exactly one bad submodule, which is the span of $z_1,\ldots,z_{c_1}$. Using reasoning as in the general case, there will be one bad matching, which is the result of twisting $G_{c_1},\ldots,G_1$ and not twisting $G_m$, which means we cannot twist $G_{m-1},\ldots,G_{d_\ell} = G_{c_1+1}$. Since the height vector of this unique bad matching is equal to the dimension vector of the bad submodule, we again see that they correspond by the bijection in Theorem \ref{thm:CaScLatticeBij}.

Finally, assume $w'$ is a direct string. Then there are no bad submodules of $M(w')$ because a submodule including $z_1$ must also include $z_m$. In this case, $\mathcal{G}$ is a zig-zag snake graph with an even number of tiles and with $G_2$ glued onto $N(G_1)$. One can show with induction that $m$ of the $m+1$ perfect matchings of $\mathcal{G}$ use $S(G_1)$ and the other matching uses $N(G_m)$. 

\end{proof}

We can apply results from \cite{haupt2012euler} to conclude that we can compute $\chi(Gr_{\mathbf{e}}(M(\overline{w})))$ with good matchings of band graphs. 

\begin{corollary}\label{cor:EulerCharBandGraph}
Let $\overline{w}$ be a band for a gentle pair $(Q,I)$. Then, $\chi(Gr_{\mathbf{e}}(M(\overline{w},\lambda,1)))$ is equal to the number of good matchings of $\widetilde{\mathcal{G}}(\overline{w})$ with height vector $\mathbf{e}$. 
\end{corollary}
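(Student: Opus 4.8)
The plan is to deduce this statement directly from Theorem \ref{thm:BandMatchingSubmoduleBijection}, in exactly the way Corollary \ref{cor:CaScEulerChar} was deduced from Theorem \ref{thm:CaScLatticeBij}. First I would recall the input from \cite{haupt2012euler}: by Theorem 1.2.a and Corollary 3.1 of that paper, for the band module $M = M(\overline{w},\lambda,1)$ the Euler characteristic $\chi(\text{Gr}_{\mathbf{e}}(M))$ equals the number of canonical string submodules of $M$ with dimension vector $\mathbf{e}$. This is the same tool already used in the string case before Corollary \ref{cor:CaScEulerChar}, now applied to a band module with $n=1$.

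Next I would invoke Theorem \ref{thm:BandMatchingSubmoduleBijection} to identify the set of canonical submodules of $M(\overline{w},\lambda,1)$ with the set of good matchings of $\widetilde{\mathcal{G}}(\overline{w})$. The one point requiring care is that this identification respects the grading: a good matching $P$ must correspond to a canonical submodule whose dimension vector equals the height vector $\mathbf{h}(P)$. This is built into the proof of Theorem \ref{thm:BandMatchingSubmoduleBijection}, which obtains its bijection by restricting the bijection of \canakci and Schroll from Theorem \ref{thm:CaScLatticeBij}. By the remark following that theorem, that bijection sends a perfect matching $P$ of $\mathcal{G}(w')$ to a canonical submodule of $M(w')$ with dimension vector $\mathbf{h}(P)$; passing to the band graph glues tiles but leaves tile labels unchanged, the height vector of a good matching of $\widetilde{\mathcal{G}}(\overline{w})$ is by definition the height vector of the corresponding perfect matching of the underlying snake graph, and a submodule of $M(w')$ that descends to a submodule of $M(\overline{w})$ keeps the same dimension vector. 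Hence height vectors and dimension vectors match under the restricted bijection.

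Combining these observations: the number of good matchings of $\widetilde{\mathcal{G}}(\overline{w})$ with height vector $\mathbf{e}$ equals the number of canonical submodules of $M(\overline{w},\lambda,1)$ with dimension vector $\mathbf{e}$, which by the result of Haupt equals $\chi(\text{Gr}_{\mathbf{e}}(M(\overline{w},\lambda,1)))$. As $\lambda \in K^\times$ was arbitrary, this proves the corollary. I do not anticipate a genuine obstacle: the substantive work is already contained in Theorem \ref{thm:BandMatchingSubmoduleBijection} and in \cite{haupt2012euler}, and the only thing to verify is the compatibility of the bijection with the height/dimension grading, which, as explained, follows from how the height vector of a good matching was defined together with the corresponding property of the \canakci--Schroll bijection.
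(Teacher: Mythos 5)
Your proposal is correct and matches the paper's (very brief) justification: the paper likewise deduces the corollary by combining Haupt's result on Euler characteristics of quiver Grassmannians with the lattice bijection of Theorem~\ref{thm:BandMatchingSubmoduleBijection}, in direct analogy with how Corollary~\ref{cor:CaScEulerChar} follows from Theorem~\ref{thm:CaScLatticeBij}. The extra care you take to verify that the restricted \canakci--Schroll bijection sends height vectors to dimension vectors is exactly the right point to check, and your argument for it is sound.
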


We remark that, when our algebra $KQ/I$ comes from an orbifold with triangulation $T$, so that a string $w$ corresponds to an arc $\gamma(w)$, then $\mathcal{G}(w)$ is isomorphic to $\mathcal{G}_{\gamma(w),T}$, and similarly $\widetilde{\mathcal{G}}(\overline{w})$ is isomorphic to $\widetilde{\mathcal{G}}_{\xi_{\overline{w}},T}$ for a band $\overline{w}$ with corresponding closed curve $\xi_{\overline{w}}$. 

\subsection{Minimal Terms}\label{subsec:gvector}

First we define the $g$-vector of a representation of a bound quiver. 
We follow the definition in \cite{cerulli2015caldero} but we restrict to the case where our representations never have decoration. For $i \in Q_0$, let $S_i$ be the simple representation supported at vertex $i$.

\begin{definition}
Let $\Lambda = KQ/I$ and let $n = \vert Q_0 \vert$. Let $M \in \text{rep}(Q,I)$. Then, $\mathbf{g}_\Lambda(M) = (g_1(M),\ldots,g_n(M))$ where \[
g_i(M) = -\dim \Hom_{\Lambda} (S_i,M) + \dim \Ext_\Lambda^1(S_i,M)
\]
\end{definition}

We can compute the $g$-vector for a representation by looking at its injective presentation.

\begin{lemma}[Lemma 3.4 \cite{labardini2019family}]\label{lem:gvectorFromInjectivePres}
Let $M \in \text{rep}(Q,I)$ with $g$-vector $\mathbf{g}_{\Lambda}(M) = (g_1(M),\ldots,g_n(M))$. Consider a minimal injective presentation of $M$ given by \[
0 \to M \to \oplus_{i=1}^n I_i^{a_i'} \to \oplus_{i=1}^n I_i^{b_i'}.
\]
Then, \[
g_i(M) = -a_i' + b_i'
\]
\end{lemma}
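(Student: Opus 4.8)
The plan is to compute both sides of the claimed identity by applying the functors $\Hom_\Lambda(S_i,-)$ and $\Ext^1_\Lambda(S_i,-)$ to the minimal injective presentation and reading off dimensions. First I would recall that, for any module $M$, a minimal injective presentation
\[
0 \to M \to I^0 \to I^1
\]
with $I^0 = \oplus_{i=1}^n I_i^{a_i'}$ and $I^1 = \oplus_{i=1}^n I_i^{b_i'}$ is part of an injective resolution, and in particular $I^0$ is the injective envelope of $M$ while $I^1$ is the injective envelope of $\coker(M \to I^0)$. The key structural input is that the indecomposable injective $I_j$ has simple socle $S_j$, so $\Hom_\Lambda(S_i, I_j)$ is $K$ if $i=j$ and $0$ otherwise, and moreover $I_j$ is injective so $\Ext^1_\Lambda(S_i, I_j) = 0$ for all $i,j$. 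Thus $\dim\Hom_\Lambda(S_i, I^0) = a_i'$ and $\dim\Hom_\Lambda(S_i, I^1) = b_i'$.

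Next I would apply the left-exact functor $\Hom_\Lambda(S_i,-)$ to the short exact sequence $0 \to M \to I^0 \to C \to 0$, where $C = \coker(M \to I^0)$, and also to $0 \to C \to I^1 \to C' \to 0$. The first gives the exact sequence
\[
0 \to \Hom_\Lambda(S_i,M) \to \Hom_\Lambda(S_i,I^0) \to \Hom_\Lambda(S_i,C) \to \Ext^1_\Lambda(S_i,M) \to \Ext^1_\Lambda(S_i,I^0) = 0,
\]
so that $\dim\Ext^1_\Lambda(S_i,M) = \dim\Hom_\Lambda(S_i,C) - a_i' + \dim\Hom_\Lambda(S_i,M)$, i.e.
\[
g_i(M) = -\dim\Hom_\Lambda(S_i,M) + \dim\Ext^1_\Lambda(S_i,M) = \dim\Hom_\Lambda(S_i,C) - a_i'.
\]
It then remains to show $\dim\Hom_\Lambda(S_i,C) = b_i'$. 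For this I would use that, by minimality of the presentation, the map $C \to I^1$ is an injective envelope, hence an essential monomorphism; since $S_i$ is simple, every nonzero map $S_i \to C$ is injective and extends to $I^1$, and the induced map $\Hom_\Lambda(S_i,C) \to \Hom_\Lambda(S_i,I^1)$ is injective. Conversely, $\operatorname{soc}(C) = \operatorname{soc}(I^1)$ because $C \hookrightarrow I^1$ is essential, so $\Hom_\Lambda(S_i,C) = \Hom_\Lambda(S_i,\operatorname{soc}(C)) = \Hom_\Lambda(S_i,\operatorname{soc}(I^1)) = \Hom_\Lambda(S_i,I^1)$, which has dimension $b_i'$. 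Combining, $g_i(M) = b_i' - a_i'$, as claimed.

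The main obstacle, such as it is, is bookkeeping about minimality: one must be careful that $I^0$ being the injective envelope of $M$ (not just some injective containing it) is what forces $a_i'$ to record $\dim\Hom_\Lambda(S_i,M) = \dim\Hom_\Lambda(S_i,\operatorname{soc} M)$ correctly, and that $I^1$ being the injective envelope of $C$ is what makes the socle argument work; without minimality one only gets inequalities. I would also remark that this is essentially the standard fact that applying $\Hom_\Lambda(S_i,-)$ to a minimal injective copresentation computes $g$-vectors, and cite \cite{cerulli2015caldero} or \cite{labardini2019family} for the precise statement, since the argument is formal once the socle-of-injective-envelope facts are in place.
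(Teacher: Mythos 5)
Your proof is correct, and it is essentially the standard homological-algebra argument: identify $\dim\Hom_\Lambda(S_i,I^j)$ with the multiplicity of $I_i$ in $I^j$ via $\Hom_\Lambda(S_i,I_j)=\Hom_\Lambda(S_i,\operatorname{soc} I_j)$, then run the long exact sequence for $\Hom_\Lambda(S_i,-)$ on $0\to M\to I^0\to C\to 0$, and finally use minimality ($C\hookrightarrow I^1$ essential, hence $\operatorname{soc} C=\operatorname{soc} I^1$) to convert $\dim\Hom_\Lambda(S_i,C)$ into $b_i'$. The one place you flag as an obstacle — that minimality is needed for both the $a_i'$ and $b_i'$ identifications — is indeed the crux, and you handle it properly; without essentiality one only gets $\dim\Hom_\Lambda(S_i,C)\le b_i'$. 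Note, however, that the paper itself offers no proof of this lemma: it is imported verbatim as Lemma~3.4 from \cite{labardini2019family} (see also \cite{cerulli2015caldero} for the same computation in the language of decorated representations). So there is no internal argument for your proof to diverge from; what you have written is a faithful reconstruction of the cited proof, and it is complete as stated.
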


Since we will be working with gentle algebras, we can describe the minimal injective presentation, and thus the $g$-vector, for an indecomposable representation based on the combinatorics of its associated string or band. 

A dual version of the following is given in \cite{palu1707non} using projective presentations. Recall we identify $Q_0$ with $[n] = \{1,\ldots,n\}$.

\begin{proposition}[Proposition 1.43 and Corollary 1.44 \cite{palu1707non}]\label{prop:gvectorfromstring}
Let $\Lambda = KQ/I$ for a gentle pair $(Q,I)$, and let $M = M(w) \in \text{rep}(Q,I)$ be a string module. 
\begin{itemize}
    \item Let $\mathbf{a}_\Lambda(M) = (a_1,\ldots,a_n)$ where $a_i$ is the number of times $i \in Q_0$ is a deep in $w$.
    \item Let $\mathbf{b}_\Lambda(M) = (b_1,\ldots,b_n)$ where $b_i$ is the number of times $i \in Q_0$ is a strict peak in $w$.
    \item Let $\mathbf{r}_\Lambda(M) = (r_1,\ldots,r_n)$ be defined by \[
    \mathbf{r}_\Lambda(M) = 
    \begin{cases} 
    0 & w \text{ starts and ends in a peak} \\
    e_{s(\alpha)} & w \text{ ends in a peak and } \alpha w \text{ is a string with } \alpha \in Q_1\\
    e_{s(\beta)}  & w \text{ starts in a peak and } w \beta^{-1} \text{ is a string with } \beta \in Q_1\\
    e_{s(\alpha)} + e_{s(\beta)} & \alpha w \text{ and  } w\beta^{-1} \text{ are strings with } \alpha, \beta \in Q_1\\
    \end{cases}
    \]
    where $e_i \in \mathbb{Z}^n$ is the standard basis vector with a 1 in position $i$. 
    
\end{itemize}  
Then, the injective presentation of $M$ is given by \[
0 \to M \to \bigoplus_{i=1}^n I_i^{\oplus a_i} \to \bigoplus_{i=1}^n I_i^{\oplus (b_i+r_i)}
\]
which by Lemma \ref{lem:gvectorFromInjectivePres}  implies that the $g$-vector of $M$ is given by \[
\mathbf{g}_\Lambda(M) = -\mathbf{a}_\Lambda(M) + \mathbf{b}_\Lambda(M) + \mathbf{r}_\Lambda(M).
\]
\end{proposition}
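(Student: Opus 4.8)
The plan is to compute the minimal injective presentation of $M=M(w)$ directly from the structure of a string module over a gentle algebra, reading off the indecomposable injective summands from the ``shape'' of $w$. Recall that for a gentle algebra $\Lambda = KQ/I$, the indecomposable injective $I_i$ is itself a string module $M(u_i)$ where $u_i$ is the unique maximal string of the form (inverse string)(direct string) with the socle at vertex $i$ sitting at the unique deep; concretely $I_i$ has socle $S_i$ and $u_i$ is obtained by extending $e_i$ maximally by a direct arrow on the right and an inverse arrow on the left, and then continuing alternately as forced. The socle of the string module $M(w)$ is $\bigoplus_{v \text{ a deep of } w} S_v$, so the injective envelope of $M$ is $\bigoplus_{i} I_i^{\oplus a_i}$ where $a_i$ counts deeps of $w$ at vertex $i$; this identifies $\mathbf a_\Lambda(M)$ and gives the left-hand map in the presentation.

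Next I would identify the cokernel $C$ of the injective envelope $M \hookrightarrow \bigoplus_i I_i^{\oplus a_i}$ and compute its injective envelope in turn. The point is that $C$ decomposes as a direct sum of string modules, one ``gap'' contribution for each deep of $w$ together with boundary corrections at the two ends of $w$. Here is the bookkeeping: a deep $c$ of $w$ that is \emph{strict} (internal to $w$) splits $w$ locally into a peak-to-deep-to-peak configuration; comparing the maximal string $u_c$ defining $I_c$ with the part of $w$ near $c$, the quotient $I_c/(\text{image})$ near that deep contributes a string module whose top lies at the two peaks flanking $c$ — this is exactly why $\mathbf b_\Lambda(M)$, counting strict peaks, appears, with each strict peak counted once from each adjacent deep so that the total multiplicity at vertex $i$ in the cokernel's socle is $b_i$. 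The subtlety is at the two ends $s(w)$ and $t(w)$: if $w$ starts on a peak there is no extra contribution, but if instead $\alpha w$ is a string for some genuine arrow $\alpha \in Q_1$, then extending to $u$ forces an extra injective, contributing $e_{s(\alpha)}$; symmetrically at the right end with $w\beta^{-1}$ a string we get $e_{s(\beta)}$. These four cases are precisely the definition of $\mathbf r_\Lambda(M)$. Assembling, the injective envelope of the cokernel is $\bigoplus_i I_i^{\oplus(b_i+r_i)}$, which gives the claimed two-term presentation $0 \to M \to \bigoplus_i I_i^{\oplus a_i} \to \bigoplus_i I_i^{\oplus(b_i+r_i)}$.

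Minimality of this presentation needs to be checked: the envelope $M \hookrightarrow \bigoplus_i I_i^{\oplus a_i}$ is an injective envelope (hence essential, hence left-minimal) by construction from the socle, and the second map is left-minimal because it is the injective envelope of the cokernel; for a two-term injective copresentation this is exactly the condition that it be minimal. Once minimality holds, Lemma~\ref{lem:gvectorFromInjectivePres} gives $g_i(M) = -a_i + (b_i + r_i)$, i.e. $\mathbf g_\Lambda(M) = -\mathbf a_\Lambda(M) + \mathbf b_\Lambda(M) + \mathbf r_\Lambda(M)$.

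The main obstacle I expect is the careful analysis at the two endpoints of $w$ — getting the case division for $\mathbf r_\Lambda(M)$ exactly right, in particular distinguishing when an end can be extended by a genuine arrow (of $Q_1$) versus by a formal inverse, and making sure that when $w$ is a single trivial string $e_v$ or a purely direct/inverse string the formula degenerates correctly (e.g. the socle and the end-contributions do not get double-counted). A clean way to handle this uniformly, and the route I would actually take in writing it up, is to cite the dual statement of \cite{palu1707non} (Proposition~1.43 and Corollary~1.44 there, phrased via projective presentations and string combinatorics) and simply dualize: replacing projectives by injectives, tops by socles, and reversing all arrows interchanges peaks with deeps and turns the ``can be extended by an inverse arrow'' conditions into the ``$\alpha w$ is a string'' conditions appearing in $\mathbf r_\Lambda(M)$. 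This reduces the proof to verifying that the combinatorial quantities $\mathbf a, \mathbf b, \mathbf r$ defined here are the literal duals of those in \cite{palu1707non}, which is a routine but necessary check.
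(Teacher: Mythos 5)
The paper itself does not prove this proposition: it is stated with the bracketed attribution ``Proposition~1.43 and Corollary~1.44 \cite{palu1707non}'' and the surrounding text explicitly says ``a dual version of the following is given in \cite{palu1707non} using projective presentations.'' So your fallback route -- invoking the projective-presentation version from \cite{palu1707non} and dualizing -- is in fact the paper's approach, and is a perfectly legitimate proof as long as the dictionary between their conventions and the $\mathbf a$, $\mathbf b$, $\mathbf r$ defined here is made precise.

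Your direct argument, however, has a genuine gap in the step that produces $\mathbf b_\Lambda(M)$. You assert that for a strict deep $c$ the quotient $I_{w(c)}/(\text{image of } M)$ is a string module ``whose top lies at the two peaks flanking $c$,'' and that summing these over deeps recovers the socle multiplicities $b_i$. This is not how the cokernel decomposes. Consider $Q$ with arrows $\gamma\colon p\to c_1$, $\delta\colon p\to c_2$, no relations, and $w=\gamma^{-1}\delta$, so that $w$ has deeps at $c_1,c_2$ and a single strict peak at $p$. Here $I_{c_1}=M(\gamma)$ and $I_{c_2}=M(\delta)$, and the injective envelope $M(w)\hookrightarrow I_{c_1}\oplus I_{c_2}$ is \emph{surjective onto each summand}, so $I_{c_i}/(\text{image}) = 0$ for both $i$, yet the cokernel is $S_p$. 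The copy of $S_p$ in the cokernel comes from the ``anti-diagonal'' in $(I_{c_1})_p\oplus (I_{c_2})_p$, i.e.\ from the identification of the two copies of $z_p$ that the single basis vector of $M(w)_p$ maps to -- not from any one $I_{w(c)}$ quotient on its own. Relatedly, the phrase ``counted once from each adjacent deep so that the total multiplicity at vertex $i$ in the cokernel's socle is $b_i$'' is internally inconsistent (once from each of two deeps would give $2b_i$), and the word ``top'' should in any case be ``socle,'' since what controls the second injective term is the socle of the cokernel, not its top. To make the direct argument work you would need to analyze the cokernel of $M(w)\to\bigoplus_c I_{w(c)}$ as a whole (Butler--Ringel do this by building the two-term copresentation from an explicit short exact sequence involving the overlapping peak-to-peak factors of $w$), rather than summing individual quotients $I_{w(c)}/\mathrm{image}$.

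The boundary analysis for $\mathbf r_\Lambda(M)$ that you flag as the main obstacle is indeed where the conventions bite (in particular the trivial string $e_v$ and the role of the sign functions in a string algebra that prevent double-counting when only one arrow enters $s(w)$); you are right to be cautious there. Given that the paper cites \cite{palu1707non} rather than re-proving the statement, the cleanest write-up is the one you propose in your last paragraph: carefully match $\mathbf a,\mathbf b,\mathbf r$ with the duals of the quantities in Proposition~1.43 and Corollary~1.44 of \cite{palu1707non}, rather than attempting the direct cokernel computation.
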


We provide a version of Proposition \ref{prop:gvectorfromstring} for band modules. 

\begin{proposition}\label{prop:gvectorfromband}
Let $\Lambda = KQ/I$ for a gentle pair $(Q,I)$, and let $M = M(\overline{w},\lambda,1) \in \text{rep}(Q,I)$ be a band module.
\begin{itemize}
    \item Let $\mathbf{a}_\Lambda(M) = (a_1,\ldots,a_n)$ where $a_i$ is the number of times $i \in Q_0$ is a deep in $\overline{w}$.
    \item Let $\mathbf{b}_\Lambda(M) = (b_1,\ldots,b_n)$ where $b_i$ is the number of times $i \in Q_0$ is a peak in $\overline{w}$.
\end{itemize}    

Then, the injective presentation of $M$ is given by \[
0 \to M \to \bigoplus_{i=1}^n I_i^{\oplus a_i} \to \bigoplus_{i=1}^n I_i^{\oplus b_i}
\]
and the $g$-vector is given by \[
\mathbf{g}_\Lambda(M) = -\textbf{a}_\Lambda(M) + \textbf{b}_\Lambda(M)
\]

\end{proposition}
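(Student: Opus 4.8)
The plan is to follow the proof of Proposition \ref{prop:gvectorfromstring} (Palu--Pilaud--Plamondon), computing a minimal injective presentation of the band module $M = M(\overline{w},\lambda,1)$ directly from the combinatorics of $\overline{w}$ and then applying Lemma \ref{lem:gvectorFromInjectivePres}. The essential point is that $\overline{w}$ is cyclic and hence has no endpoints; this is exactly what makes the correction term $\mathbf{r}_\Lambda(M)$ of Proposition \ref{prop:gvectorfromstring} vanish in the band case, and it is also why (see the remark after Definition \ref{def:TopAndBottom}) every peak of $\overline{w}$ is automatically strict, so that $\mathbf{b}_\Lambda(M)$ is simply the count of all peaks. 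Note also that the scalar $\lambda \in K^\times$ plays no role: the last letter of $\overline{w}$ acts by a nonzero scalar, which has the same kernel and cokernel as the identity, so the socle computations below are the same for every $\lambda$.

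First I would pin down the injective envelope of $M$. Writing $z_1,\dots,z_m$ for the canonical basis indexed by the vertices of $\overline{w}$ read cyclically (indices mod $m$), the standard analysis of string and band modules (see \cite{butler1987auslander}) shows that $z_j$ lies in the socle of $M$ if and only if position $j$ is a deep of $\overline{w}$: the only letters of $\overline{w}$ that can act nontrivially on $z_j$ are the two incident to position $j$, and these annihilate $z_j$ precisely when the letter entering $j$ is direct and the one leaving $j$ is inverse. Thus $\operatorname{soc}(M)\cong\bigoplus_i S_i^{\oplus a_i}$, and since $\operatorname{soc}(I_i)=S_i$ the natural inclusion $M\hookrightarrow\bigoplus_i I_i^{\oplus a_i}$, obtained by extending each valley of $\overline{w}$ into the corresponding indecomposable injective, is an injective envelope; this gives the first term of the minimal presentation and identifies $a_i' = a_i$ in the notation of Lemma \ref{lem:gvectorFromInjectivePres}.

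Next I would compute $N := \coker\bigl(M\hookrightarrow\bigoplus_i I_i^{\oplus a_i}\bigr)$ using the standard description of the indecomposable injectives over a gentle algebra as string modules whose unique deep is at the relevant vertex. Cutting $\overline{w}$ open at its deeps and gluing in these injective strings, $N$ decomposes as a direct sum of string modules supported on the segments of $\overline{w}$ between consecutive deeps, each grown at the top; one checks that its socle is carried precisely, with the correct multiplicities, by the peaks of $\overline{w}$. Because $\overline{w}$ has no free ends there is no additional socle arising from an "overshoot'' at an endpoint---this is exactly the phenomenon that produces $\mathbf{r}_\Lambda(M)$ in the string case. Hence $\operatorname{soc}(N)\cong\bigoplus_i S_i^{\oplus b_i}$, so $N\hookrightarrow\bigoplus_i I_i^{\oplus b_i}$ is an injective envelope, and Lemma \ref{lem:gvectorFromInjectivePres} gives $\mathbf{g}_\Lambda(M)=-\mathbf{a}_\Lambda(M)+\mathbf{b}_\Lambda(M)$. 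The one delicate point, and the place I expect most of the work to go, is this socle computation for $N$: verifying that the injective strings glue along the deeps of $\overline{w}$ into a module whose socle sits exactly at the peaks and that the resulting presentation is genuinely minimal. As a sanity check, for the band $\overline{w}=\alpha\beta^{-1}$ over the Kronecker algebra one has $\mathbf{a}=e_2$ and $\mathbf{b}=e_1$, and indeed the regular module $M(\overline{w},\lambda,1)$ has minimal injective presentation $0\to M\to I_2\to I_1$, yielding $\mathbf{g}=e_1-e_2$.
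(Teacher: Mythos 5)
Your proposal follows the same route as the paper's proof, and the structure is correct: identify the socle of $M$ at the deeps of $\overline{w}$, take $K=\bigoplus_i I_i^{\oplus a_i}$ as the injective envelope, then compute the socle of $N=\coker(M\hookrightarrow K)$ and show it is supported at the peaks. Your observations that cyclicity kills the $\mathbf{r}_\Lambda$-term of Proposition~\ref{prop:gvectorfromstring}, that every peak/deep of a band is automatically strict, and that $\lambda\in K^\times$ cannot affect the socle (because a nonzero scalar has the same kernel/cokernel as the identity) are all correct and are exactly the points the paper relies on.

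However, you have explicitly flagged but not carried out the central step — computing $\operatorname{soc}(N)$ — which is where essentially all the content of the proof lives. Two things in your sketch of that step need more care. First, you assert ``$N$ decomposes as a direct sum of string modules supported on the segments of $\overline{w}$ between consecutive deeps'' without justification. This is not as immediate as it sounds: the images $f(z_{d_i})$ at the peaks are \emph{diagonal} elements $z_{d_i}^{(i-1)}+z_{d_i}^{(i)}$ (and, at the distinguished peak, $z_1^{(1)}+\lambda z_{m+1}^{(\ell)}$), so identifying a complement that is a clean direct sum of strings requires the explicit change of basis that the paper performs at each $M_{\overline{w}(d_i)}$. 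Second, each indecomposable injective $I_{\overline{w}(c_i)}$ is the string running from peak to peak \emph{of $(Q,I)$}, not merely from peak to peak of $\overline{w}$, so the injective strings may overshoot the peaks of $\overline{w}$ on both sides; one has to check that this overshoot contributes nothing further to $\operatorname{soc}(N)$ beyond the peaks of $\overline{w}$ (this is where the gentle condition is used). The paper handles both points simultaneously by writing down $f\colon M\to K$ in a carefully indexed canonical basis $z^{(i)}_{c_i-a},\dots,z^{(i)}_{c_i+b}$ of each $I_{\overline{w}(c_i)}$, reading off that the socle of $\coker(f)$ is generated by the differences $z_{d_j}^{(j-1)}-z_{d_j}^{(j)}$ (and $z_1^{(1)}-\lambda z_{m+1}^{(\ell)}$ at the distinguished peak), hence is $\bigoplus_j S_{\overline{w}(d_j)}$. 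Supplying that computation would complete your argument.

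Your Kronecker sanity check is correct (the regular module of dimension $(1,1)$ does have minimal injective presentation $0\to M\to I_2\to I_1$, giving $\mathbf g=e_1-e_2$), and it is reassuring, but it does not substitute for the general socle computation.
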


\begin{proof}

For simplicity, we choose a cyclic representative of $\overline{w}$ such that $\epsilon_1 = +$ and $\epsilon_m = -$. This is always possible since the algebra $KQ/I$ is finite-dimensional, so we cannot have any bands consisting of only direct or inverse arrows. 

Let $z_1,\ldots,z_m$ be the canonical basis elements of $M$. Let  $c_1,\ldots,c_\ell \in [m]$ be the complete list for indices such that each $\overline{w}(c_i)$ is a deep, and similarly let $d_1,\ldots,d_{\ell}$ be the complete list such that each $\overline{w}(d_i)$ is a  peak. By our assumption, $d_1 = 1$.

The definition of a band module implies that the socle of $M$ is generated by $ z_{c_1},\ldots,z_{c_\ell}$. The injective envelope is therefore of the form $f: M \to K = \bigoplus_{i=1}^\ell I_{\overline{w}(c_i)}$. Recall that the injective module $I_{x}$ corresponds to the string which starts and ends on a peak and has $x$ as its unique deep. For each $I_{\overline{w}(c_i)}$, we choose the representative of the associated string which matches the orientation of the arrows in  $\overline{w}$ near $c_i$. Using this orientation, index the canonical basis elements of $I_{\overline{w}(c_i)}$ as $z^{(i)}_{c_i - a},\ldots,z_{c_i}^{(i)},\ldots,z^{(i)}_{c_i+b}$ for some $a,b \geq 0$.

With this indexing, the map $f$ is determined by the images  $f(z_{c_i}) = z_{c_i}^{(i)}$ as we will now describe.  Suppose that $j \in [m]$ is such that $\overline{w}(j)$ is neither a deep nor a peak; then, $j$ is connected to a unique deep, say $c_i$, by a direct or inverse substring of $\overline{w}$. It follows that $f(z_j) = z^{(i)}_j$. If $j = d_i$ for $i > 1$, then $j$ is connected by an inverse substring to a deep $c_{i-1}$ and by a direct substring to a deep $c_i$, and we have $f(z_{d_i}) = z_{d_i}^{(i-1)} + z_{d_i}^{(i)}$. Similarly, the peak $d_1$ is connected by a direct substring to the deep $c_1$ and by an inverse substring to $c_\ell$, and the definition of this band module implies that the image is $f(z_1) = z_1^{(1)} + \lambda z_{m+1}^{(\ell)}$. Note that the basis element $z_{m+1}^{(\ell)}$ exists in $I_{w(c_\ell)}$ by our choice of indexing.

Changing the basis at each vector space $M_{w(d_i)}$, we see that the socle of $\coker(f)$ is generated by $ z_1^{(1)} - \lambda z_{m+1}^{(\ell)}, z_{d_2}^{(1)} - z_{d_2}^{(2)},\ldots,z_{d_\ell}^{(\ell-1)} - z_{d_\ell}^{(\ell)}$. Therefore, the injective envelope of $\coker(f)$ is given by $ \bigoplus_{i=1}^\ell I_{\overline{w}(d_i)}$. The explicit details of the map $K \to \bigoplus_{i=1}^\ell I_{\overline{w}(d_i)}$ are similar to the map $M \to K$.

\end{proof}

Our next goal is to compare the $g$-vector of $M(w)$ or $M(\overline{w})$ and the weight of the minimal matching of $\mathcal{G}_{\gamma(w),T}$ or $\widetilde{\mathcal{G}}_{\xi(\overline{w}),T}$ respectively. 
We begin with a technical lemma. While this fact is well-known, we provide a line of reasoning for any reader unfamiliar with snake graphs.

\begin{lemma}\label{lem:ZigZagFlip}
Let $\mathcal{G} = (G_1,\ldots,G_m)$ be a zig-zag snake graph. 
\begin{itemize}
    \item If $G_2$ is glued on the east edge of $G_1$, then in the minimal matching of $\mathcal{G}$, $G_m$ is the unique twistable tile. In the maximal matching of $\mathcal{G}$, $G_1$ is the unique twistable tile.
    \item If $G_2$ is glued on the north edge of $G_1$, then in the minimal matching of $\mathcal{G}$, $G_1$ is the unique twistable tile. In the maximal matching of $\mathcal{G}$, $G_m$ is the unique twistable tile.
\end{itemize}
\end{lemma}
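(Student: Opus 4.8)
The plan is to induct on the number of tiles $m$, using the fact that deleting either end tile of a zig-zag snake graph leaves a zig-zag snake graph. The base case $m=1$ is immediate: the single tile $G_1 = G_m$ has exactly two perfect matchings, $\{N(G_1),S(G_1)\}$ and $\{E(G_1),W(G_1)\}$, and by definition $P_-$ is the one containing $S(G_1)$, so $P_- = \{N(G_1),S(G_1)\}$ contains both horizontal edges of $G_1$ and hence $G_1$ is twistable in $P_-$; dually $G_1$ is the unique twistable tile of $P_+$.

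For $m \ge 2$ I would first pin down $P_-$ concretely. The boundary of a zig-zag snake graph is a single cycle --- the outline of a staircase --- so it has exactly two perfect matchings using only boundary edges, and $P_-$ is the one containing $S(G_1)$. Tracing this cycle shows that the two edges of $P_-$ meeting $G_1$ are $S(G_1)$ together with exactly one of $N(G_1)$ and $E(G_1)$, namely whichever of those is \emph{not} the edge $G_1$ shares with $G_2$ (that shared edge is internal and hence absent from $P_-$). So there are two cases according to the gluing of $G_2$: either $P_-$ contains both horizontal edges of $G_1$, in which case $G_1$ is twistable; or only $S(G_1)$ lies inside $G_1$ and the two remaining corners of $G_1$ are matched into $G_2$, in which case $G_1$ is not twistable. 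In the first case, $G_1$ must be the \emph{unique} twistable tile of $P_-$: for a zig-zag snake graph the poset $\mathcal{P}_{\mathcal{G}}$ of Section~\ref{sec:LatticePMs} is a total order, so by Theorem~\ref{Thm:PMPoset} the lattice of perfect matchings is a chain, and the minimum of a chain has a unique cover, i.e.\ a unique twistable tile. In the second case, restricting $P_-$ to the zig-zag snake graph $\mathcal{G}' = (G_2,\dots,G_m)$ gives one of its two boundary matchings, and the inductive hypothesis applied to $\mathcal{G}'$ identifies its unique twistable tile as one of the two end tiles; tracking which of the restricted matching's boundary edges is used then pins it down. The claims about the maximal matching $P_+$ follow from those about $P_-$ by the symmetry of a snake graph that reverses the order of the tiles and interchanges $P_-$ with $P_+$, and the direction of each twist (up out of $P_-$, down out of $P_+$) is consistent with the twist-parity condition of Lemma~\ref{lem:twistparity}, which provides a useful check.

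I expect the main obstacle to be exactly this bookkeeping: keeping straight the planar embedding of the staircase and deciding, tile by tile, which of a tile's four corners are matched within it and which across an edge shared with a neighbour. It is easy to swap the roles of the ``north'' and ``east'' gluings, or to confuse $P_-$ with $P_+$; fixing once and for all that $G_1$ is the first tile and that $P_-$ is the boundary matching containing $S(G_1)$, and cross-checking against Lemma~\ref{lem:twistparity}, is what keeps the argument honest.
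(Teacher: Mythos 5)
Your approach (induction on $m$ together with Theorem~\ref{Thm:PMPoset}) is genuinely different from the paper's, which is a one-step counting argument: in a zig-zag snake graph every tile has a corner belonging to no other tile, so every tile is incident to at least one edge of any matching; only $G_1$ and $G_m$ have a pair of \emph{parallel} boundary edges; and since a perfect matching of an $m$-tile snake graph has $m+1$ edges, exactly one tile contributes a parallel pair, and it must be $G_1$ or $G_m$. Both routes could work, but there are two concrete problems in yours. First, the claim that the boundary matching $P_-$ always meets $G_1$ in \emph{two} of its edges, ``$S(G_1)$ together with exactly one of $N(G_1)$ and $E(G_1)$,'' is false when $G_2$ is glued on the north: there $N(G_1)$ is internal and the alternating boundary matching through $S(G_1)$ avoids both $E(G_1)$ and $W(G_1)$, so $P_-$ contains only $S(G_1)$ among the edges of $G_1$, with the NW and NE corners of $G_1$ covered by edges of $G_2$. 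Your subsequent two-case split is the correct one, but it does not follow from the sentence preceding it.

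Second, and more seriously, the step that actually decides \emph{which} end tile is twistable is exactly the step you leave to ``tracking,'' and the consistency check you advertise is not carried out; running it exposes a real problem. Take $m=2$ with $G_2$ glued on the east: then $N(G_1)$ is a boundary edge, $P_-=\{S(G_1),N(G_1),E(G_2)\}$, and by Lemma~\ref{lem:twistparity} twisting the odd tile $G_1$ from its horizontal edges to its vertical edges moves up in the lattice, so $G_1$ (not $G_m$) is the unique twistable tile in $P_-$; the same happens for $m=3$. This contradicts the first bullet of the lemma as printed, and the two bullets appear to be interchanged. The same sign slip occurs in the verbal description of $\mathcal{P}_{\mathcal{G}}$ at the end of Section~\ref{sec:LatticePMs}, which already disagrees with Example~\ref{ex:SnakeAndPoset} (there $G_2$ is glued on the east, yet the displayed Hasse diagram and the cover relations of the lattice give $1\lessdot 2$). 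A careful version of your induction would have to record, at each stage, which gluing forces $N(G_1)\in P_-$ and then invoke twist parity; doing so corrects the statement rather than confirming it, and this is precisely the bookkeeping you yourself flagged as the main danger.
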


\begin{proof}
The proof follows from two basic observations. First, note that if $G = (G_1,\ldots,G_m)$ is a zig-zag shape, each tile has a vertex which is not incident to any other tile. Thus, in any matching, there will be at least one edge incident to each tile. Then, note that the only tiles that have a pair parallel edges that are both on the boundary are $G_1$ and $G_m$. Since a matching of $G$ uses $m+1$ tiles, exactly one of these is twistable in a minimal or maximal matching. The specific cases follow from the definition of the minimal/maximal matching and direct/inverse zig-zags. 
\end{proof}

Since zig-zag snake graphs are well understood, it is to our advantage to think about general snake graphs as a composition of zig-zag shapes.  
Let $\mathcal{G} = (G_1,\ldots,G_n)$. We define a set of (overlapping) connected subsets of vertices, $S_1 = (G_1 = G_{i_0},\ldots,G_{i_1}),S_2 = (G_{i_1},\ldots,G_{i_2})$ and so on until $S_\ell = (G_{i_{\ell-1}},\ldots,G_n = G_{i_\ell})$ where the only values $k$ such that $G_{k-1},G_k,G_{k+1}$ form a straight line are $k = i_j$ for $0 < j < \ell$.

In the case of a cluster algebra from a surface this result follows from \cite{geiss2022schemes}; see Remark 11.1 and Proposition 10.14. See \cite{MSW-bases} for a similar result in the surface case concerning the $g$-vector of the cluster variable $x_\gamma$. 

\begin{proposition}\label{prop:gVectorMinMatch}
Fix an orbifold $\mathcal{O}$ with triangulation $T$.
\begin{enumerate}
    \item Let $\gamma$ be an arc on $\mathcal{O}$ and let $\mathcal{G}_{\gamma,T}$ be the snake graph from $\gamma$. Then, \[
\frac{\text{wt}(P_-)}{\text{cross}(\gamma,T)} = x^{\mathbf{g}_\Lambda(M(w_\gamma))}
\]
where $P_-$ is the minimal matching of $\mathcal{G}_{\gamma,T}$.
    \item Let $\xi$ be a closed curve on $\mathcal{O}$ and let $\widetilde{\mathcal{G}}_{\xi,T}$ be the band graph from $\xi$. Then, \[
    \frac{\text{wt}(P_-)}{\text{cross}(\xi,T)} = x^{\mathbf{g}_{\Lambda}(M(\overline{w}_\xi))}
    \]
where $P_-$ is the minimal matching of $\widetilde{\mathcal{G}}_{\xi,T}$.
\end{enumerate}
\end{proposition}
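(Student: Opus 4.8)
The plan is to compute $\mathrm{wt}(P_-)$ directly from the combinatorics of the string $w_\gamma$ (resp.\ the band $\overline{w}_\xi$) and to match it against $\mathrm{cross}(\gamma,T)\cdot x^{\mathbf{g}_\Lambda}$ arc by arc, feeding in the $g$-vector formulas of Proposition~\ref{prop:gvectorfromstring} and Proposition~\ref{prop:gvectorfromband}. Throughout I use the identification $\mathcal{G}_{\gamma,T}\cong\mathcal{G}(w_\gamma)$ noted after Corollary~\ref{cor:EulerCharBandGraph}, and the fact that boundary arcs of $T$ carry weight $1$, so their edges may be ignored. The first ingredient is a bookkeeping of edge labels: in an interior tile $G_k$ of $\mathcal{G}_{\gamma,T}$, the two edges glued to $G_{k-1}$ and $G_{k+1}$ carry the labels $\tau_{[k-1]},\tau_{[k]}$ of the third sides of the two triangles through which $\gamma$ passes at that crossing, while the remaining two (boundary) edges carry the labels $\tau_{i_{k-1}}$ and $\tau_{i_{k+1}}$ of the arcs crossed just before and just after; the terminal tiles $G_1$ and $G_{m+1}$ each carry one extra pair of boundary edges, read off from the triangle that $\gamma$ enters, respectively exits.

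Next I would pin down $P_-$ tile by tile. Decomposing $w_\gamma$ into its maximal direct and inverse substrings, which alternate in sign, and using $\mathcal{G}_{\gamma,T}\cong\mathcal{G}(w_\gamma)$, the corresponding maximal zig-zag sub-snake-graphs overlap consecutively exactly in the tiles lying in a straight line --- the strict peaks and deeps of $w_\gamma$. Since $P_-$ is forced once one commits to using only boundary edges together with the South edge of $G_1$, its restriction to each zig-zag piece is the boundary matching singled out by Lemma~\ref{lem:ZigZagFlip}, the orientation of each staircase (hence which of its two boundary matchings is forced) being dictated by whether the corresponding substring is direct or inverse; Lemma~\ref{lem:twistparity} gives an independent check. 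Patching these across the overlap tiles lets one read off which of its two boundary edges $P_-$ uses at each interior crossing, and which extra boundary edges it uses at $G_1$ and $G_{m+1}$ --- the latter governed by whether $w_\gamma$ starts/ends on a peak or a deep, i.e.\ by the vector $\mathbf{r}_\Lambda(M(w_\gamma))$ of Proposition~\ref{prop:gvectorfromstring}.

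Now fix an arc $\tau_i\in T$. Its exponent in $\mathrm{cross}(\gamma,T)$ is the number $c_i$ of tiles of $\mathcal{G}_{\gamma,T}$ labeled $\tau_i$, and its exponent in $\mathrm{wt}(P_-)$ is the number of edges of $P_-$ labeled $\tau_i$; combining the edge-label description with the description of $P_-$, every crossing contributes to the net exponent of $x_i$ in $\mathrm{wt}(P_-)/\mathrm{cross}(\gamma,T)$ precisely the amount that its role in $w_\gamma$ contributes to $g_i$ in Proposition~\ref{prop:gvectorfromstring}: $-1$ at a deep of $\tau_i$, $+1$ at a strict peak, and the appropriate terminal adjustment recorded by $\mathbf{r}_\Lambda$. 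Summing over all crossings yields $-a_i+b_i+r_i=g_i(M(w_\gamma))$, which is part~(1). It remains to handle the case in which $\gamma$ winds around an orbifold point: then $w_\gamma$ contains a loop at a pending vertex, and the corresponding stretch of $\mathcal{G}_{\gamma,T}$ is one of the finitely many puzzle pieces of Table~\ref{table:SnakeGraphPuzzlePiece} (or the endpoint tile of Figure~\ref{fig:WindBeforeFirst}); in each, one checks directly that $P_-$ uses exactly the edges predicted by the deep/peak pattern around the loop, so the count is unaffected. I expect this point, together with the $\mathbf{r}_\Lambda$-bookkeeping at the two ends, to be the main obstacle; the interior of $\gamma$ behaves as in the surface computations of \cite{geiss2022schemes,MSW-bases}.

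For part~(2) the argument runs in parallel and is in fact simpler. Writing $\overline{w}_\xi=\alpha_1^{\epsilon_1}\cdots\alpha_m^{\epsilon_m}$ and $w'=\alpha_1^{\epsilon_1}\cdots\alpha_{m-1}^{\epsilon_{m-1}}$, the band graph $\widetilde{\mathcal{G}}_{\xi,T}$ is $\mathcal{G}(w')$ with $G_1$ and $G_m$ glued along the edge labeled $\tau'$, and the minimal matching of $\mathcal{G}(w')$ descends to the good minimal matching of $\widetilde{\mathcal{G}}_{\xi,T}$. Since a band has no dangling ends, Proposition~\ref{prop:gvectorfromband} gives $\mathbf{g}_\Lambda(M(\overline{w}_\xi))=-\mathbf{a}_\Lambda+\mathbf{b}_\Lambda$ with no $\mathbf{r}$-term, all peaks and deeps now being strict. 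Running the exponent comparison above cyclically --- treating the wrap-around edge $\tau'$ as an ordinary interior boundary edge and applying Lemma~\ref{lem:ZigZagFlip} to the zig-zag pieces exactly as before --- gives the net exponent of each $x_i$ as $-a_i+b_i=g_i(M(\overline{w}_\xi))$, which is part~(2), the orbifold-point windings again being handled by the puzzle pieces of part~(1).
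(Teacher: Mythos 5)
Your proposal is correct and follows essentially the same approach as the paper's proof. You identify the same key steps: bookkeeping the edge labels of each tile, noting that the two boundary edges of $G_k$ carry the labels $\tau_{i_{k-1}}$ and $\tau_{i_{k+1}}$ (so the label $\tau_{i_j}$ appears twice, on $G_{j-1}$ and $G_{j+1}$); determining which of these $P_-$ uses by decomposing the snake graph into its maximal zig-zag pieces and observing that the count is $0$ at a deep, $1$ at a non-extremal zig-zag vertex, and $2$ at a strict peak; treating the two terminal tiles separately and matching the extra boundary edge(s) there against the $\mathbf{r}_\Lambda$-term of Proposition~\ref{prop:gvectorfromstring}; and handling the pending-arc/orbifold-point windings by direct inspection of the puzzle pieces. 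The only cosmetic difference is that the paper pins down the twistable tiles of $P_-$ by invoking the {\c{C}}anak{\c{c}}{\i}--Schroll bijection (Theorem~\ref{thm:CaScLatticeBij}) rather than patching Lemma~\ref{lem:ZigZagFlip} across overlap tiles, and for part~(2) the paper deduces the band identity by first applying part~(1) to the cut string $w'$ and then cancelling the factor $x_z$ coming from the glued $\tau'$-edge, whereas you run the exponent comparison cyclically; both routes yield the same $-a_i+b_i$ count, the cyclic version having the mild advantage that there is no endpoint analysis to do.
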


\begin{proof}
(1) Suppose $\gamma$ is an arc on $\mathcal{O}$ and $\gamma$ has crossings with arcs $\tau_{i_1},\ldots,\tau_{i_m} \in T$, with the order determined by an orientation placed on $\gamma$. Let $\mathcal{G}_{\gamma,T}$ have tiles $G_1,\ldots,G_m$. If $m = 1$, the claim can be shown by direct inspection, so we assume $m > 1$. 

It follows from the construction of a snake graph that, for each $2 \leq j \leq m-1$, there are exactly two boundary edges with label $x_{\tau_{i_j}}$. If $G_{j-1},G_j,G_{j+1}$ share a vertex, so that they are in the same zig-zag piece, the minimal matching will use exactly one of these edges. If $G_{j-1},G_j,G_{j+1}$ do not share a vertex, so that they form a straight line, then the minimal matching will either use both of these edges or neither. 

\begin{center}
\begin{tikzpicture}
\draw(0,0) to (1,0) to (1,1) to (2,1) to (2,2) to (1,2) to (0,2) to (0,1) to (0,0);
\draw (0,1) to (1,1) to (1,2);
\draw[gray,dashed] (1,0) to node[right,scale=0.5, xshift = -5pt, yshift = 5pt]{$j-1$} (0,1);
\draw[gray,dashed] (0,2) to node[right,scale=0.5, xshift = -5pt, yshift = 5pt]{$j$} (1,1);
\draw[gray,dashed] (2,1) to node[right,scale=0.5, xshift = -5pt, yshift = 5pt]{$j+1$} (1,2);
\node[right, xshift = -3] at (1,0.5){$x_{j}$};
\node[below, yshift = 3] at (1.5,1){$x_j$};
\node[left, xshift = 3] at (0,1.5){$x_{j-1}$};
\node[above, yshift = -3] at (0.5,2){$x_{j+1}$};
\draw[xshift = 5\R] (0,0) to (1,0) to (2,0) to (3,0) to (3,1) to (0,1) to (0,0);
\draw[xshift = 5\R] (1,0) to (1,1);
\draw[xshift = 5\R] (2,0) to (2,1);
\draw[gray,dashed,xshift = 5\R] (1,0) to node[right,scale=0.5, xshift = -5pt, yshift = 5pt]{$j-1$} (0,1);
\draw[gray,dashed,xshift = 5\R] (2,0) to node[right,scale=0.5, xshift = -5pt, yshift = 5pt]{$j$} (1,1);
\draw[gray,dashed,xshift = 5\R] (3,0) to node[right,scale=0.5, xshift = -5pt, yshift = 5pt]{$j+1$} (2,1);
\node[above, yshift = -3,xshift = 5\R ] at (0.5,1){$x_j$};
\node[below, yshift = 3, xshift = 5\R] at (2.5,0){$x_j$};
\node[below, xshift = 5\R, yshift = 3] at (1.5,0){$x_{j-1}$};
\node[above, xshift = 5\R, yshift = -3] at (1.5,1){$x_{j+1}$};
\end{tikzpicture}
\end{center}

There is only one boundary edge labeled $x_{i_1} = x_{\tau_{i_1}}$ which lies on $G_2$. Since we have established the convention that $P_-$ includes the south edge on $G_1$, we see that the edge labeled $x_{i_1}$ will be in $P_-$ only if $G_2$ is glued to the north side of $G_1$. There is similarly exactly one boundary edge labeled $x_{i_m} = x_{\tau_{i_m}}$ which lies on $G_{m-1}$.

\begin{center}
\begin{tikzpicture}
\draw(0,0) to (1,0) to (1,1) to (1,2) to (0,2) to (0,1) to (0,0);
\draw (0,1) to (1,1);
\draw[gray,dashed] (0,1) to node[right, scale=0.7, xshift = -5pt, yshift = 5pt] {$1$} (1,0);
\draw[gray,dashed] (0,2) to node[right, scale=0.7, xshift = -5pt, yshift = 5pt] {$2$} (1,1);
\node[right, xshift = -3] at (1,0.5){$x_2$};
\node[left] at (0,1.5){$x_1$};
\node[below] at (0.5,0){$x_a$};
\draw[thick,orange] (0,0) to (1,0);
\draw[thick,orange] (0,1) to (0,2);
\draw (5,0) to (7,0) to (7,1) to (5,1) to (5,0);
\draw (6,0) to (6,1);
\node[below] at (5.5,0){$x_a$};
\node[below] at (6.5,0){$x_1$};
\node[above] at (5.5,1){$x_2$};
\draw[gray,dashed] (5,1) to node[right, scale=0.7, xshift = -5pt, yshift = 5pt] {$1$} (6,0);
\draw[gray,dashed] (6,1) to node[right, scale=0.7, xshift = -5pt, yshift = 5pt] {$2$} (7,0);
\draw[thick,orange] (5,0) to (6,0);
\draw[thick,orange] (5,1) to (6,1);
\end{tikzpicture}
\end{center}

We work with the convention that the orientation in $G_1$ matches the orientation on the orbifold. Therefore, the edge marked $x_a$ above will be labeled with the clockwise neighbor of $\tau_{i_1}$ in the first triangle, $\Delta_0$, $\gamma$ passes through, or if $\tau_{i_1}$ is a pending arc and $\gamma$ first passes through its interior, then this edge will also be labeled with $\tau_{i_1}$. In the former case, if this  clockwise neighbor is a boundary arc, the corresponding edge of the snake graph will not be labeled. There are similar cases for the north or east edge of $G_m$. 

If $w_\gamma = \alpha_1^{\epsilon_1}\cdots \alpha_{m-1}^{\epsilon_{m-1}}$,  and $c_1,\ldots,c_\ell$ are the deeps of $w_\gamma$, then the twistable tiles in $P_-$ are exactly the tiles of the form $G_{c_i}$. This follows from Theorem \ref{thm:CaScLatticeBij}. This means that none of the boundary edges labeled $x_{w(c_i)}$ are in $P_-$. Similarly, if $d_1,\ldots,d_{\ell'}$ are the peaks of $w_\gamma$, then all of the edges labeled $x_{w(d_i)}$ appear in $P_-$. If $d_i$ is a strict peak, there are two such edges and otherwise there is one such edge.  All other vertices $i$ of $w_\gamma$ correspond to vertices which sit in a zig-zag piece, so there will be one edge with label $x_{w(i)}$ in $P_-$. Finally, there will be factors $x_a,x_b$ coming from the first and last triangles or monogons $\gamma$ passes through, as discussed.  The exact form of $\frac{\text{wt}(P_-)}{\text{cross}(\gamma,T)}$ will depend on the signs $\epsilon_1$ and $\epsilon_{m-1}$ since only strict peaks will contribute to the numerator. For example, if $\epsilon_1 = \epsilon_{m-1} = +$, then $d_1 = 1$ is not a strict peak but $d_{\ell'}$ is a strict peak and the expression simplifies as \[
\frac{\text{wt}(P_-)}{\text{cross}(\gamma,T)} = \frac{x_a x_b x_{w(d_2)} \cdots x_{w(d_{\ell'})}}{x_{w(c_1)}\cdots x_{w(c_\ell)}}.
\]

If $\tau_a$ is a clockwise neighbor of $\tau_{i_1}$, then there is an arrow $\alpha: a \to i_1$ in $Q_T$ and $\alpha w_\gamma$ is a string. If $\gamma$ first crosses through a monogon enclosed by pending arc $\tau_{i_1}$, then there is a loop $\delta$ in $Q_T$ based at $i_1$ and $\delta w_\gamma$ is a string. There are similar cases for $\tau_b$.  With this observation, we conclude that $\frac{\text{wt}(P_-)}{\text{cross}(\gamma,T)} = x^{\mathbf{g}_\Lambda(M(w_\gamma))}$. Varying $\epsilon_1$ and $\epsilon_{m-1}$ only changes which peaks are strict and the same reasoning holds.

(2) Suppose $\overline{w}_{\xi} = \alpha_1^{\epsilon_1}\cdots \alpha_m^{\epsilon_m}$ and let $w = \alpha_1^{\epsilon_1}\cdots \alpha_{m-1}^{\epsilon_{m-1}}$. For convenience, choose a representative of $\overline{w}_{\xi}$ so that $\alpha_m$ is not a loop, implying that $s(w)$ and $t(w)$ are distinct vertices in $Q_0$. We know that $\tau_{s(w)}$ and $\tau_{t(w)}$ border the same triangle, which is both the first and the last triangle $\gamma(w)$ passes through. Assume without loss of generality that $\tau_{s(w)}$ immediately follows $\tau_{t(w)}$ in clockwise order in this triangle, and let $\tau_z$ be the third side of this triangle. Our choice of orientation between $\tau_{s(w)}$ and $\tau_{t(w)}$ implies that the arrow between $s(w)$ and $t(w)$ in $Q_T$ is orientated $s(w) \to t(w)$, so that $\epsilon_m = -$. Let $\mathcal{G}_{\gamma(w),T}$ be the snake graph for $\gamma(w)$. By the proof of part (1), we know that \[
\frac{\text{wt}(P^{\gamma(w)}_-)}{\text{cross}(\gamma(w),T)} = \frac{x_z x_{s(w)} x_{w(d_1)}\cdots x_{w(d_{\ell'})}}{ x_{w(c_1)}\cdots x_{w(c_\ell)}}
\]
where $P^{\gamma(w)}_-$ is the minimal matching of $\mathcal{G}_{\gamma(w),T}$, $c_1,\ldots,c_\ell$ are the deeps in $w$, and $d_1,\ldots,d_{\ell'}$ are the strict peaks. 

We glue $G_1$ and $G_m$ along the edges labeled $\tau_z$ in $\mathcal{G}_{\gamma(w),T}$ to form $\widetilde{\mathcal{G}}_{\xi,T}$. Our choice of orientation for $\tau_{s(w)},\tau_{t(w)},\tau_z$ means that the edge labeled $\tau_z$ on $G_1$ is in $P_-^{\gamma(w)}$, but the edge with the same label on $G_m$ will not be in the minimal matching. Therefore, this edge will not be in $P_-$, the minimal matching of $\widetilde{\mathcal{G}}_{\xi,T}$. All other edges will be the same, so we can conclude 
\[
\frac{\text{wt}(P_-)}{\text{cross}(\xi,T)} = \frac{ x_{s(w)} x_{w(d_1)}\cdots x_{w(d_{\ell'})}}{ x_{w(c_1)}\cdots x_{w(c_\ell)}}
\]
By analyzing cases for the signs $\epsilon_1,\epsilon_{m-1}$, we can see that the righthand side is equivalent to $x^{\mathbf{g}_\Lambda(M(\overline{w}_\xi))}$. For example, if $\epsilon_1 = \epsilon_{m-1} = +$, then the set of deeps of $w$ coincides with the set of strict deeps in $\overline{w}$ and similarly for strict peaks. If $\epsilon_1 = -$ and $\epislon_{m-1} = +$, then $1$ is a deep in $w$ and not in $\overline{w}_{\xi}$. Since there is a factor of $x_{s(w)}$ in both the numerator and denominator of the expression, it reduces to the expected result. 
\end{proof}

\subsection{Comparing Snake Graphs and CC Map}

 In this section we show that, given an unpunctured orbifold $\mathcal{O}$ with triangulation $T$, applying the Caldero-Chapoton map to any string or band for $(Q_T,I_T)$ is equivalent to using the snake graph expansion formula on the corresponding arc or closed curve on $\mathcal{O}$.

Given a quiver $Q$ with $\vert Q_0 \vert = n$, we define an $n\times n$ matrix $C_Q = (c_{i,j})$ where \[
c_{i,j} = \vert \{ \alpha \in Q_1 \vert \alpha: j \to i \} \vert - \vert \{\alpha \in Q_1 \vert \alpha: i \to j \} \vert.
\]

If there is a loop at vertex $i$, it will contribute 1 to each set, so that its contributions cancel. Thus, even when our quiver $Q_T$ has loops, its adjacency matrix is skew-symmetric.

\begin{lemma}\label{lem:GeneralMatchingWeight}
Let $P$ be a perfect matching of $\mathcal{G} = \mathcal{G}_{\gamma(w),T}$  with height monomial $\mathbf{h}(P)$.  Then, \[
\frac{\textit{wt}(P)}{\text{cross}(\gamma(w),T)} = \mathbf{x}^{\mathbf{g}_\Lambda(M(w)) + C_Q \mathbf{h}(P)}.
\]
The same is true for a good matching $P$ of $\widetilde{\mathcal{G}} = \widetilde{\mathcal{G}}_{\xi(\overline{w}),T}$ when we replace $\mathbf{g}_\Lambda(M(w))$ with $\mathbf{g}_\Lambda(M(\overline{w}))$ .
\end{lemma}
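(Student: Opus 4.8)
The plan is to induct on the structure of the perfect matching, moving up the lattice of perfect matchings (Theorem~\ref{Thm:PMPoset}) one twist at a time, and to track how each twist changes both sides of the claimed identity. The base case is the minimal matching $P_-$: there $\mathbf{h}(P_-) = \mathbf{0}$, so the right-hand side is $\mathbf{x}^{\mathbf{g}_\Lambda(M(w))}$, and this is exactly the content of Proposition~\ref{prop:gVectorMinMatch}. So it remains to show that a single twist at a tile $G_j$ multiplies $\frac{\text{wt}(P)}{\text{cross}(\gamma(w),T)}$ by $\mathbf{x}^{C_Q e_{\tau_{i_j}}}$, i.e.\ by $\prod_i x_i^{c_{i,\tau_{i_j}}}$, where $\tau_{i_j}$ is the label of $G_j$; since twisting $G_j$ changes $\mathbf{h}(P)$ by $+e_{\tau_{i_j}}$ when moving up the lattice, this is precisely the increment needed on the right-hand side.

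The core local computation is as follows. A twist at $G_j$ replaces the two edges of $P$ on $G_j$ (say the horizontal pair) with the other two (the vertical pair). The weight $\text{wt}(P)$ changes by dividing out the labels of the removed pair and multiplying in the labels of the inserted pair; the factor $\text{cross}(\gamma(w),T)$ is unaffected. I would identify the four edge-labels of $G_j$ with the three or four arcs of the two triangles glued along $\tau_{i_j}$, and observe that the change in weight is the product $x_c x_d / (x_a x_b)$ (in the notation of the tile pictures in Section~\ref{subsec:SnakeGraphsfromOrbifold}), where $\tau_a, \tau_b$ are the arcs that appear counterclockwise-adjacent to $\tau_{i_j}$ in the two triangles and $\tau_c, \tau_d$ are the clockwise-adjacent ones (or vice versa, depending on the parity/orientation of the tile, governed by Lemma~\ref{lem:twistparity}). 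By the construction of $Q^T_{\text{gen}}$ (and hence $Q_T$), each such adjacency corresponds to exactly one arrow into or out of the vertex $\tau_{i_j}$: a counterclockwise-following arc $\tau_b$ gives an arrow $\tau_{i_j} \to \tau_b$, and a clockwise-following arc $\tau_c$ gives an arrow $\tau_c \to \tau_{i_j}$. Summing the four contributions and comparing with the definition $c_{i,j} = |\{\alpha: j\to i\}| - |\{\alpha: i \to j\}|$ yields exactly the column $C_Q e_{\tau_{i_j}}$. The pending-arc tiles (Figure~\ref{fig:WindBeforeFirst} and Table~\ref{table:SnakeGraphPuzzlePiece}) and the tiles touching a loop must be handled as separate cases, but here the loop contributes $+1$ to both the ``in'' and ``out'' counts and cancels, consistent with the remark that $C_{Q_T}$ is skew-symmetric; and the puzzle-piece configurations for consecutive crossings of a pending arc only introduce already-accounted-for labels, so no new contribution arises. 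The first and last tiles $G_1, G_d$ are also slightly special because of the $x_a, x_b$ factors coming from the boundary triangles, but since $G_1$ is never twistable downward past $P_-$ by convention and the analysis in Proposition~\ref{prop:gVectorMinMatch} already absorbed those factors into the $g$-vector, the increment on an interior twist is unaffected.

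For the band-graph statement, the argument is parallel: Theorem~\ref{thm:BandMatchingSubmoduleBijection} and Theorem~\ref{thm:GMPoset} give the lattice of good matchings with the same twist structure, the base case is the second part of Proposition~\ref{prop:gVectorMinMatch}, and the per-twist increment computation is literally the same local picture on each tile of $\widetilde{\mathcal{G}}_{\xi(\overline{w}),T}$ (the cut edge and its identification do not affect the labels on any individual tile, so the twist-weight increments are identical). The only point needing a remark is that good matchings which are not reachable from $P_-$ by interior twists alone do not occur — every good matching lies in the sublattice generated by twists above $P_-$, by Theorem~\ref{thm:GMPoset} — so the induction covers all of them.

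The step I expect to be the main obstacle is the bookkeeping of signs and orientations in the local twist computation: correctly matching ``horizontal vs.\ vertical pair of edges'' with ``counterclockwise- vs.\ clockwise-adjacent arc'' across all the tile types (standard tiles in both relative orientations, the pending-arc winding tile, the four puzzle pieces of Table~\ref{table:SnakeGraphPuzzlePiece}, and tiles incident to a loop), while keeping consistent with the twist-parity condition of Lemma~\ref{lem:twistparity}. Once a single clean statement of the form ``twisting $G_j$ up the lattice multiplies the normalized weight by $\mathbf{x}^{C_Q e_{\tau_{i_j}}}$'' is verified tile-type by tile-type, the induction closes immediately.
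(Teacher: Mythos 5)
Your proof proposal takes essentially the same route as the paper: induction on $\sum_i h_i$ with Proposition~\ref{prop:gVectorMinMatch} as the base case, the twist-parity condition (Lemma~\ref{lem:twistparity}) to pin down the per-twist change in weight, and the observation that the counterclockwise/clockwise neighbors of $\tau_{i_j}$ in $T$ give exactly the arrows that form the column $C_Q \mathbf{e}_{i_j}$, with loops cancelling; the band case is then identical. One small caution: you hedge with ``or vice versa'' when relating the counterclockwise/clockwise neighbors to the numerator/denominator of the increment --- the induction only closes because Lemma~\ref{lem:twistparity}, combined with the fact that consecutive tiles alternate in relative orientation, forces the \emph{same} sign at every cover relation, so the increment is always $\mathbf{x}^{+C_Q\mathbf{e}_{i_j}}$ when moving up; stating this unambiguously is what the paper does and is worth making explicit.
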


\begin{proof}
Given $\mathbf{h}(P) = (h_1,\ldots,h_n)$, we will induct on $h_1 + \cdots + h_n$. The case $h_1 = \cdots = h_n = 0$ occurs when $P$ is the minimal matching of $\mathcal{G}$, and the statement is true by Proposition \ref{prop:gVectorMinMatch}. Now assume we know the statement is true for $P'$ and consider $P$ which covers $P'$ in the lattice of perfect matchings of $\mathcal{G}$. This means that we reach $P$ from $P'$ by twisting one tile. Assume this tile is associated to $\tau_i \in T$. Then,  $\mathbf{h}(P) = \mathbf{h}(P') + \mathbf{e}_i$ where $\mathbf{e}_i$ is the $i$-th standard basis vector. 

Assume that the counterclockwise neighbors of $\tau_i$ are $\tau_b$ and $\tau_d$, and the clockwise neighbors are $\tau_a$ and $\tau_c$. By Lemma \ref{lem:twistparity} and the fact that tiles alternate in orientation,  $\text{wt}(P) = \text{wt}(P') \cdot \frac{x_b x_d}{x_a x_c}$. If some of these are boundary edges, their variables are set to 1. Recalling our convention for $C_Q$, we see that $\text{wt}(P) = \text{wt}(P') \cdot \mathbf{x}^{C_Q \mathbf{e}_i}$. Since by induction, $\text{wt}(P') = \mathbf{x}^{\mathbf{g}_\Lambda(M(w)) + C_Q \mathbf{h}(P')}$, our proof is complete. 

An identical proof shows the statement in the band case.

\end{proof}

Let $M$ be a representation of a bound quiver $(Q,I)$. For purposes of the definition, we consider $M$ as a module over the quotient of the completed path algebra $\Lambda = k\langle \langle Q \rangle \rangle /I$. The Caldero-Chapoton map of $M$, as defined in \cite{cerulli2015caldero}, is given by  \[
CC(M) = \mathbf{x}^{\mathbf{g}_\Lambda(M)} \sum_{\mathbf{e}} \chi(Gr_{\mathbf{e}}(M)) \mathbf{x}^{C_Q \mathbf{e}}.
\]

Since we want to compare this output with cluster variables in a generalized cluster algebra with principal coefficients, we will add coefficients to the definition of the Caldero-Chapoton map. See \cite{geiss2020generic} for a more in-depth discussion of the Caldero-Chapoton map with coefficients. Let $\tilde{\mathbf{x}} = (x_1,\ldots,x_n, y_1,\ldots,y_n)$ and  $\tilde{\mathbf{g}}_\Lambda(M) = (g_1,\ldots,g_n,0,\ldots,0)$ each be vectors of length $2n$. Given a quiver $Q$, with $\vert Q_0 \vert = n$, let $\widetilde{C_Q}$ be the $2n \times n$ matrix with top $n \times n$ submatrix given by $C_Q$ and bottom $n \times n$ submatrix given by the identity matrix $I_n$.  Then, we define the Caldero-Chapoton map with principal coefficients to be 

\[
CC_{prin}(M) = \tilde{\mathbf{x}}^{\tilde{\mathbf{g}}_\Lambda(M)} \sum_{\mathbf{e}} \chi(Gr_{\mathbf{e}}(M)) \tilde{\mathbf{x}}^{\widetilde{C_Q} \mathbf{e}}.
\]

\begin{theorem}\label{thm:CCMapAgreesSnakeGraph}
Consider an orbifold $\mathcal{O}$ with triangulation $T$. Let $(Q_T,I_T)$ be the gentle pair associated to $\mathcal{O},T$. 
\begin{enumerate}
\item Let $w$ be a string for $(Q_T,I_T)$ and let $\gamma(w)$ be the corresponding  arc in $\mathcal{O}$. Then,\begin{equation}\label{eq:SnakeArc}
\tilde{\mathbf{x}}^{\tilde{\mathbf{g}}_\Lambda(M(w))} \sum_{\mathbf{e}} \chi(Gr_{\mathbf{e}}(M(w))) \tilde{\mathbf{x}}^{\widetilde{C_{Q_T}} \mathbf{e}} = \frac{1}{\text{cross}(\gamma(w),T)} \sum_{P} \text{wt}(P)y(P)
\end{equation}
where the first sum is over $\mathbf{e} \in \mathbb{N}^{\vert Q_0 \vert}$ and the second sum is over perfect matchings of $\mathcal{G}_{\gamma(w),T}$. 
\item Let $\overline{w}$ be a band for $(Q_T,I_T)$ and let $\xi(\overline{w})$ be the corresponding  closed curve in $\mathcal{O}$. Then,\begin{equation}\label{eq:BandCurve}
\tilde{\mathbf{x}}^{\tilde{\mathbf{g}}_\Lambda(M(\overline{w}))} \sum_{\mathbf{e}} \chi(Gr_{\mathbf{e}}(M(\overline{w}))) \tilde{\mathbf{x}}^{\widetilde{C_{Q_T}} \mathbf{e}} = \frac{1}{\text{cross}(\xi(\overline{w}),T)} \sum_{P} \text{wt}(P)y(P)
\end{equation}
where the first sum is over $\mathbf{e} \in \mathbb{N}^{\vert Q_0 \vert}$ and the second sum is over good matchings of $\widetilde{\mathcal{G}}_{\xi(\overline{w}),T}$. 
\end{enumerate}
\end{theorem}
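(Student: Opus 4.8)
The plan is to reduce both parts of Theorem~\ref{thm:CCMapAgreesSnakeGraph} to a term-by-term comparison between the Caldero-Chapoton expansion and the snake/band graph expansion, using the structural results already assembled in this section. The left-hand side of \eqref{eq:SnakeArc} is, by definition, $CC_{prin}(M(w)) = \sum_{\mathbf{e}} \chi(\mathrm{Gr}_{\mathbf{e}}(M(w)))\,\tilde{\mathbf{x}}^{\tilde{\mathbf{g}}_\Lambda(M(w)) + \widetilde{C_{Q_T}}\mathbf{e}}$, where I split the $2n$-dimensional exponent into its ``$x$-part'' (rows $1,\ldots,n$, governed by $\mathbf{g}_\Lambda(M(w)) + C_{Q_T}\mathbf{e}$) and its ``$y$-part'' (rows $n+1,\ldots,2n$, which is simply $\mathbf{e}$ since the bottom block of $\widetilde{C_{Q_T}}$ is $I_n$).

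First I would establish the combinatorial dictionary. By Corollary~\ref{cor:CaScEulerChar} (resp. Corollary~\ref{cor:EulerCharBandGraph} for bands), $\chi(\mathrm{Gr}_{\mathbf{e}}(M(w)))$ equals the number of perfect matchings (resp. good matchings) of $\mathcal{G}(w) \cong \mathcal{G}_{\gamma(w),T}$ (resp. $\widetilde{\mathcal{G}}(\overline{w}) \cong \widetilde{\mathcal{G}}_{\xi(\overline{w}),T}$) with height vector $\mathbf{e}$; here I use the remark immediately following Corollary~\ref{cor:EulerCharBandGraph} that these graphs agree with the ones built from the curve. So the left-hand side can be rewritten as a sum over matchings $P$: $\sum_{P} \tilde{\mathbf{x}}^{\tilde{\mathbf{g}}_\Lambda(M(w)) + \widetilde{C_{Q_T}}\mathbf{h}(P)}$. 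Now I peel off the $x$-part and the $y$-part of each summand separately. For the $x$-part, Lemma~\ref{lem:GeneralMatchingWeight} gives exactly $\mathbf{x}^{\mathbf{g}_\Lambda(M(w)) + C_{Q_T}\mathbf{h}(P)} = \frac{\mathrm{wt}(P)}{\mathrm{cross}(\gamma(w),T)}$. For the $y$-part, the exponent is $\mathbf{h}(P)$, and $\mathbf{y}^{\mathbf{h}(P)}$ is by Definition~\ref{def:HeightMonomial} precisely the height monomial $y(P)$. Multiplying these together, each summand becomes $\frac{1}{\mathrm{cross}(\gamma(w),T)}\,\mathrm{wt}(P)\,y(P)$, and summing over all perfect matchings $P$ of $\mathcal{G}_{\gamma(w),T}$ yields the right-hand side of \eqref{eq:SnakeArc}. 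For part (2), the identical argument runs with ``perfect matching'' replaced by ``good matching'' throughout, citing Corollary~\ref{cor:EulerCharBandGraph} and the band-graph half of Lemma~\ref{lem:GeneralMatchingWeight}; one should note that the weight and height monomials of a good matching are defined to agree with those of the corresponding perfect matching of the underlying cut snake graph, so no new computation is needed.

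The real content has therefore already been front-loaded into the section: the main obstacle is not in this final assembly but in the three supporting results it invokes, namely the Euler characteristic formulas (which rest on Theorems~\ref{thm:CaScLatticeBij} and~\ref{thm:BandMatchingSubmoduleBijection} plus \cite{haupt2012euler}), Proposition~\ref{prop:gVectorMinMatch} identifying the minimal-matching weight with $\mathbf{x}^{\mathbf{g}_\Lambda}$, and the inductive step in Lemma~\ref{lem:GeneralMatchingWeight} matching a single twist with multiplication by $\mathbf{x}^{C_{Q_T}\mathbf{e}_i}$ (which uses Lemma~\ref{lem:twistparity} and the alternation of tile orientations). Granting all of these, the proof of Theorem~\ref{thm:CCMapAgreesSnakeGraph} is essentially bookkeeping: one must only be careful that the identification $\mathcal{G}(w) \cong \mathcal{G}_{\gamma(w),T}$ is compatible with height vectors (so that the index $\mathbf{e}$ appearing in the Euler characteristic matches the index appearing in $y(P)$), and that the sum over $\mathbf{e} \in \mathbb{N}^{|Q_0|}$ on the left is genuinely a reindexing of the sum over matchings $P$ on the right with no double-counting — which is exactly the content of the lattice bijections. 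A careful write-up would state the reindexing explicitly and then verify the $x$-part and $y$-part identities for a generic summand, with the band case dispatched in a sentence by appeal to the parallel statements.
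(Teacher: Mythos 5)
Your proposal is correct and follows essentially the same route as the paper's own proof: both reduce the identity to Lemma~\ref{lem:GeneralMatchingWeight} together with Corollary~\ref{cor:CaScEulerChar} (resp.~Corollary~\ref{cor:EulerCharBandGraph} for bands), with the only cosmetic difference being that you expand the left-hand side and regroup by matchings while the paper starts from the right-hand side and regroups by dimension vectors.
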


\begin{proof}
First, assume $w$ is a string with corresponding arc $\gamma(w)$. Consider the righthand side of Equation \ref{eq:SnakeArc}. By the definition of $y(P)$ and Lemma \ref{lem:GeneralMatchingWeight}, we can rewrite this as \[
\mathbf{x}^{\mathbf{g}_\Lambda(M(w))} \sum_P \mathbf{x}^{C_{Q_T}\mathbf{h}(P)} \mathbf{y}^{\mathbf{h}(P)} =
\tilde{\mathbf{x}}^{\tilde{\mathbf{g}}_\Lambda(M(w))} \sum_P \tilde{\mathbf{x}}^{\widetilde{C_{Q_T}}\mathbf{h}(P)}
\]

Then, if we let $n(\mathbf{e},\mathcal{G})$ denote the number of perfect matchings $P$ of a snake graph $\mathcal{G}$ with height vector $\mathbf{h}(P) = \mathbf{e}$, we can rewrite the sum and apply Corollary \ref{cor:CaScEulerChar} to reach our desired result,
\[
 =\tilde{\mathbf{x}}^{\tilde{\mathbf{g}}_\Lambda(M(w))} \sum_{\mathbf{e}} n(\mathbf{e},\mathcal{G}_{\gamma(w),T})\tilde{\mathbf{x}}^{\widetilde{C_{Q_T}}\mathbf{e}} =\tilde{\mathbf{x}}^{\tilde{\mathbf{g}}_\Lambda(M(w))} \sum_{\mathbf{e}} \chi(Gr_{\mathbf{e}}(M(w)))\tilde{\mathbf{x}}^{\widetilde{C_{Q_T}}\mathbf{e}} .
\]

We can reach our result in the band/closed curve case in the same way, using instead Corollary \ref{cor:EulerCharBandGraph} at the last step.

\end{proof}

Combining part 1 of Theorem \ref{thm:CCMapAgreesSnakeGraph} with Theorem 1.1 from \cite{banaian2020snake} yields the following.

\begin{corollary}\label{cor:CCOnOrdinaryIsClVar}
Consider an orbifold $\mathcal{O}$ with triangulation $T$. Let $(Q_T,I_T)$ be the gentle pair associated to $\mathcal{O},T$, and let $w$ be a string for $(Q_T,I_T)$ such that $\gamma(w)$ is an arc with no self-intersections. Then, $CC_{prin}(M(w))$ is the cluster variable $x_{\gamma(w)}$ in $\mathcal{A}(Q^T_{\text{gen}})$.
\end{corollary}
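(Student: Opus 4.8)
The plan is to chain together the two results named in the statement. First I would note that the left-hand side of Equation~\eqref{eq:SnakeArc} in Theorem~\ref{thm:CCMapAgreesSnakeGraph} is, by definition, exactly $CC_{prin}(M(w))$ for the gentle algebra $\Lambda = KQ_T/I_T$, upon taking $Q = Q_T$ in the formula defining $CC_{prin}$. Hence part~(1) of Theorem~\ref{thm:CCMapAgreesSnakeGraph} immediately gives
\[
CC_{prin}(M(w)) = \frac{1}{\text{cross}(\gamma(w),T)} \sum_P \text{wt}(P)\, y(P),
\]
the sum being over perfect matchings of the snake graph $\mathcal{G}_{\gamma(w),T}$.

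Next I would use the extra hypothesis that $\gamma(w)$ has no self-intersections, so that $\gamma(w)$ is a genuine (non-generalized) arc on $\mathcal{O}$ and the hypotheses of Theorem~\ref{thm:EKClusterExpansion} are met. By the remark following Corollary~\ref{cor:EulerCharBandGraph}, the snake graph $\mathcal{G}_{\gamma(w),T}$ built from the string $w$ as in Section~\ref{sec:CCMap} is isomorphic, with matching weight and height data, to the snake graph built directly from the arc $\gamma(w)$ as in Section~\ref{subsec:SnakeGraphsfromOrbifold}; therefore the displayed sum is the snake graph expansion of the arc $\gamma(w)$. Theorem~\ref{thm:EKClusterExpansion} (Theorem~1.1 of \cite{banaian2020snake}) then identifies that expansion with the cluster variable $[x_{\gamma(w)}]^T$ of $\mathcal{A}(Q^T_{\text{gen}})$ written in the cluster attached to $T$, that is, with $x_{\gamma(w)}$. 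Combining the two equalities gives $CC_{prin}(M(w)) = x_{\gamma(w)}$.

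I do not anticipate a genuine obstacle: the corollary is essentially the concatenation of Theorem~\ref{thm:CCMapAgreesSnakeGraph}(1) and Theorem~\ref{thm:EKClusterExpansion}. The only point deserving a sentence of care is the identification of the two snake graph constructions --- one from $w$ via the conventions of Section~\ref{sec:CCMap}, one from $\gamma(w)$ via Section~\ref{subsec:SnakeGraphsfromOrbifold} --- checking that the orientation, first-tile, and relative-orientation conventions yield the same graph up to symmetries that do not alter the set of perfect matchings, and that tile labels transport correctly under the bijection of Theorem~\ref{thm:StringsBijectCurves}. This is precisely the content of the remark after Corollary~\ref{cor:EulerCharBandGraph}, so it can be cited rather than reproved.
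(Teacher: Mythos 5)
Your proposal is correct and is exactly the paper's own argument: the corollary is obtained by chaining Theorem~\ref{thm:CCMapAgreesSnakeGraph}(1) with Theorem~\ref{thm:EKClusterExpansion} (Theorem~1.1 of \cite{banaian2020snake}), and the identification $\mathcal{G}(w)\cong\mathcal{G}_{\gamma(w),T}$ you flag is already handled by the remark following Corollary~\ref{cor:EulerCharBandGraph}.
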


\begin{remark}
\begin{itemize}
    \item Corollary \ref{cor:CCOnOrdinaryIsClVar} was also given in the coefficient-free case by Labardini-Fragoso and Mou as Corollary 9.8 in \cite{labardini2023gentle}, using the theory of mutation of decorated representations. 
    \item Labardini-Fragoso and Mou show in \cite{labardini2023gentleI} that the Caldero-Chapoton map gives a bijection between $\tau$-rigid pairs of the algebra $KQ_T/I_T$ and cluster monomials from the generalized cluster algebra $\mathcal{A}(Q_{gen}^T)$.
\end{itemize}
\end{remark}

\subsection{Examples}

We end with examples of Theorem \ref{thm:CCMapAgreesSnakeGraph}. First, consider the following orbifold with triangulation $T$ and arc $\gamma$. 

\begin{center}
\begin{tikzpicture}[scale=1.5]
\node[] (E) at (1,1) {$\times$};
 \draw[](2,0) -- (2,2) -- (0,2) -- (0,0) -- (2,0);
 \draw[](0,2) to[out = -10, in = 110, looseness = 1.5] (2,0);
 \node[right] at (1.5,1.5) {$\tau_1$};
 \draw[] (0,0) to [out = 60, in = 115, looseness=3] (2,0);
 \node[right] at (0.3,0.3){$\tau_2$};
 \draw[] (2,0) to [out=125, in = 45] (0.8,1.2);
 \draw[] (2,0) to [out = 160, in = 225](0.8,1.2);
 \node[] at (1.6,0.3){$\tau_3$};
 \draw[thick, orange] (0,2) to [out = -15, in = 45] (1.2,0.8);
 \draw[thick,orange] (0,2) to [out = 290,  in = 225](1.2,0.8);
 \node[orange] at (0.4,1.7){$\gamma$};
 \end{tikzpicture} 
 \end{center}
 
 The cluster variable associated to $\gamma$ is \[
  x_\gamma = \frac{x_3^3 + 2y_3x_1x_3^2 + y_3^2x_1^2x_3 + y_2y_3x_1x_2x_3^2 + y_2y_3^2x_1^2x_2x_3 + y_2^2y_3^2x_1^2x_2^2x_3}{x_2^2x_3^2}.
 \]
 
 The quiver $Q = Q_T$ is given by 
 
 \begin{center}
 \begin{tikzcd}
 1 & 2 \arrow[l,"\alpha"] & 3 \arrow[l,"\beta"] \arrow[out=0,in=90,loop, "\rho"]\\
 \end{tikzcd}
 \end{center}
 
 and $I_T = \langle \rho^2 \rangle$. We see that $w_\gamma = \beta \rho \beta^{-1}$ or equivalently $\beta \rho^{-1}\beta^{-1}$. Choosing an orientation to match the latter string, the snake graph $\mathcal{G}_{\gamma,T}$ is below. \[
 \begin{tikzpicture}
 \draw[thick] (0,0) to (1,0) to node[below, yshift = 3pt]{$x_2$} (2,0) to node[below, yshift = 3pt]{$x_3$} (3,0) to node[right, xshift = -3pt]{$x_2$} (3,1) to (3,2) to node[above, yshift = -3pt]{$x_1$} (2,2) to node[left, xshift = 3pt]{$x_3$} (2,1) to node[above, yshift = -3pt]{$x_3$}(1,1) to node[above, yshift = -3pt]{$x_3$} (0,1) to node[left, xshift = 3pt]{$x_1$} (0,0);
\draw[thick] (1,0) -- (1,1);
\draw[thick] (2,0) to node[right, yshift = -3pt, xshift = -3pt]{$x_3$} (2,1) to (3,1);
\draw[gray,dashed] (0,1) to node[midway,right, xshift = -5pt, yshift = 5pt]{$\tau_2$} (1,0);
\draw[gray,dashed] (2,0) to node[midway,right,xshift = -5pt, yshift = 5pt]{$\tau_3$} (1,1);
\draw[gray,dashed] (3,0) to node[midway,right,xshift = -5pt, yshift = 5pt]{$\tau_3$} (2,1);
\draw[gray,dashed] (3,1) to node[midway,right,xshift = -5pt, yshift = 5pt]{$\tau_2$} (2,2);
 \end{tikzpicture}
 \]
 
 The lattice of perfect matchings and the corresponding order ideals of the fence poset were given in Example \ref{ex:SnakeAndPoset}. The module $M = M(w_\gamma)$ is given by 
 
 \begin{center}
\begin{tikzcd}
0 & k^2 \arrow{l}{} & k^2 \arrow{l}{ \begin{pmatrix} 1 \amsamp 0 \\ 0 \amsamp 1 \end{pmatrix}}\arrow[loop right]{r}{\begin{pmatrix} 0 \amsamp 0 \\ 1 \amsamp 0 \end{pmatrix}} 
\end{tikzcd}
\end{center}

The $g$-vector of $M$ is $\mathbf{g}_\Lambda(M) =(0,-2,1)$. We check\[
\frac{\text{wt}(P_-)}{\text{cross}(\gamma,T)} = \frac{x_3^3}{x_2^2x_3^2} = \frac{x_3}{x_2^2} = \mathbf{x}^{\mathbf{g}_\Lambda(M). }\] 

Finally, we provide $\widetilde{C_{Q}}$, \[
\begin{pmatrix} 0 & 1 & 0 \\
-1 & 0 & 1\\
0 & -1 & 0 \\
1 & 0 & 0 \\
0 & 1 & 0\\
0 & 0 & 1\\
\end{pmatrix}
\]

The following table summarizes the calculation on the lefthand side of Equation \ref{eq:SnakeArc}. Let $M = M(w_\gamma)$. We omit vectors $\mathbf{e}$ such that $\text{Gr}_{\mathbf{e}}(M) = \emptyset$ since these do not contribute to the sum.

\begin{center}
\renewcommand{\arraystretch}{2}
\begin{tabular}{|c|c|c|}\hline
    $\mathbf{e}$ & $\chi(\text{Gr}_{\mathbf{e}}(M))$ &  $\widetilde{\mathbf{x}}^{C_{Q}\mathbf{e}}$ \\ \hline\hline
    (0,0,0) & 1 & 1\\\hline
    (0,1,0) & 2 & $\frac{x_1}{x_3}y_2$\\\hline
    (0,2,0) & 1 & $\frac{x_1^2}{x_3^2}y_2^2$\\\hline
    (0,1,1) & 1 & $\frac{x_1x_2}{x_3}y_2y_3$\\\hline
    (0,2,1) & 1 & $\frac{x_1^2x_2}{x_3^2}y_2^2y_3$\\\hline
    (0,2,2) & 1 & $\frac{x_1^2x_2^2}{x_3^2}y_2^2y_3^2$\\\hline
\end{tabular}
 \end{center}
 
 Comparing the first two columns with the lattice in Example \ref{ex:SnakeAndPoset} illustrates \canakci and Schroll's result (Theorem \ref{thm:CaScLatticeBij}). For example, $\chi(\text{Gr}_{(0,1,0)}(M)) = 2$ and there are two matchings of $\mathcal{G}_{\gamma,T}$ with height vector $(0,1,0)$. 
 
There are no bands in the gentle algebra given in the first example, and likewise there are no closed curves in the orbifold which are neither contractible nor enclose a single orbifold point. We look at a second triangulated orbifold which will admit interesting closed curves, one of which is depicted below and labeled $\xi$. 

\begin{center}
\begin{tikzpicture}[scale=3]
\draw[thick] (0,0.5) node {$\mathbf{\times}$};
\draw[thick] (0,1.5) node {$\mathbf{\times}$};
\draw[in=0,out=45,looseness =0.9, thick] (0,0) to  (0,0.85);
\draw[in=180,out=135,looseness=0.9, thick] (0,0) to (0,0.85);
\draw[in=0,out=180+135,looseness =0.9, thick] (0,2) to (0,1.15);
\draw[in=180,out=180+45,looseness=0.9, thick](0,2) to (0,1.15);
\draw[out=30,in=-30, thick] (0,0) to node[right] {$\tau_4$} (0,2);
\draw[out = 10, in = 250, thick] (0,0) to node[right]{} (1,1);
\draw[out = 110, in = -10, thick] (1,1) to node[right]{} (0,2);
\draw[out=150,in=-150, thick] (0,0) to node[left] {} (0,2);
\draw[out = 37, in = 300, thick] (0,0) to (0.3,0.8);
\draw[out = 120, in = 300, thick] (0.3,0.8) to  (-0.3, 1.2);
\draw[out = 120, in = 180+ 44, thick] (-0.3,1.2) to (0,2);
\draw[thick, orange, out = 0, in = 270] (0,0.3) to (0.25,1.05);
\draw[thick, orange, out = 90, in = 0] (0.25,1.05) to (0,1.8);
\draw[thick, orange, out = 180, in = 90] (0,1.8) to (-0.25,1.05);
\draw[thick, orange, out = 270, in = 180] (-0.25,1.05) to (0,0.3);
\node[] at (0,1.05){$\tau_1$};
\node[] at (0.1,1.3){$\tau_2$};
\node[] at (-0.1,0.7){$\tau_3$};
\node[orange, right, xshift = -5pt] at (0.3,1){$\xi$};
\end{tikzpicture}
\end{center}

The quiver $Q = Q_T$ for this triangulation is given by 

\begin{center}
\begin{tikzcd}[arrow style=tikz,>=stealth,row sep=1.5em]
3  \arrow[out=90,in=180, loop, "\epsilon"] \arrow[r, "\delta"] & 1 \arrow[dr, "\beta"] && 2  \arrow[out=90,in=0, loop, "\mu"] \arrow[ll ,"\alpha"]\\
& & 4 \arrow[ur, "\gamma"] & \\
\end{tikzcd} 
\end{center}

and $I_T = \langle \alpha \beta, \beta \gamma, \gamma \alpha, \epsilon^2, \mu^2 \rangle$. The band associated to $\xi$ is $\overline{w}_\xi = \alpha^{-1} \mu \alpha \delta^{-1}\epsilon \delta $. For $\lambda \in k^\times$, the band module $M(\overline{w}_\xi, \lambda, 1)$ looks as follows

\begin{center}
\begin{tikzcd}[arrow style=tikz,>=stealth,row sep=1.5em]
k^2  \arrow[loop left]{l}{\begin{pmatrix} 0 \amsamp 0 \\ 1 \amsamp 0 \end{pmatrix}} \arrow[right]{r}{\begin{pmatrix} 0 \amsamp \lambda \\ 1 \amsamp 0 \end{pmatrix}} & k^2 \arrow[dr, "0"] && k^2 \arrow[loop right]{r}{\begin{pmatrix} 0 \amsamp 0 \\ 1 \amsamp 0 \end{pmatrix}} \arrow{ll}[above]{\begin{pmatrix} 1 \amsamp 0\\ 0 \amsamp 1 \end{pmatrix}}\\
& & 0 \arrow[ur, "0"] & \\
\end{tikzcd} 
\end{center}

The band graph $\widetilde{G}_{\xi,T}$ is given below, where we identify vertices $a$ and $a'$ and $b$ and $b'$, thereby gluing the graph along the bold purple edges. 

\begin{center}
\begin{tikzpicture}[scale = 1.2]
\draw(0,0) -- (3,0) -- (3,4) -- (2,4) -- (2,1) -- (0,1) -- (0,0);
\draw (1,0) -- (1,1);
\draw (2,0) -- (2,1) -- (3,1);
\draw (2,2) -- (3,2);
\draw (2,3) -- (3,3);
\draw[gray,dashed] (1,0) to node[right]{$\tau_1$} (0,1);
\draw[gray,dashed] (2,0) to node[right]{$\tau_2$} (1,1);
\draw[gray,dashed] (3,0) to node[right]{$\tau_2$} (2,1);
\draw[gray,dashed] (3,1) to node[right]{$\tau_1$} (2,2);
\draw[gray,dashed] (3,2) to node[right]{$\tau_3$} (2,3);
\draw[gray,dashed] (3,3) to node[right]{$\tau_3$} (2,4);
\node[below, yshift = 3pt] at (0.5,0){$x_3$};
\node[below, yshift = 3pt] at (1.5,0){$x_1$};
\node[below, yshift = 3pt] at (2.5,0){$x_2$};
\node[right, xshift = -3pt] at (3,0.5){$x_1$};
\node[right, xshift = -3pt] at (3,1.5){$x_3$};
\node[right, xshift = -3pt] at (3,2.5){$x_3$};
\node[left, xshift = 3pt] at (2,1.5){$x_2$};
\node[left, xshift = 3pt] at (2,2.5){$x_1$};
\node[left, xshift = 3pt] at (2,3.5){$x_3$};
\node[above, yshift = -3pt] at (0.5,1){$x_2$};
\node[above, yshift = -3pt] at (1.5,1){$x_2$};
\node[right,xshift = -3pt] at (1,0.5){$x_4$};
\node[right,xshift = -3pt] at (2,0.5){$x_2$};
\node[above,yshift = -3pt] at (2.5,1){$x_4$};
\node[above,yshift = -3pt] at (2.5,3){$x_3$};
\node[above,yshift = -3pt] at (2.5,4){$x_1$};
\draw[ultra thick, purple] (0,0) -- (0,1);
\draw[ultra thick, purple] (3,3) -- (3,4);
\node[left, xshift = 2pt] at (0,0){$a$};
\node[left,xshift = 2pt] at (0,1){$b$};
\node[right, xshift = -2pt] at (3,3){$a'$};
\node[right,xshift = -2pt] at (3,4){$b'$};
\end{tikzpicture}
\end{center}

The $g$-vector of $M(\overline{w}_{\xi}, \lambda,1)$ is $(-2,1,1,0)$, and one can again check that this matches $\frac{wt(P_-)}{\text{cross}(\xi,T)}$. We provide $\widetilde{C_Q}$ \[
\begin{pmatrix}
0 & 1 & 1 & -1 \\
-1 & 0 & 0 & 1\\
-1 & 0 & 0 & 0 \\
1 & - 1 & 0 & 0 \\
1 & 0 & 0 & 0\\
0 & 1 & 0 & 0\\
0 & 0 & 1 & 0\\
0 & 0 & 0 & 1\\
\end{pmatrix}.
\]

Finally we again provide a table to summarize the calculation of the Caldero-Chapoton map applied to $M = M(\overline{w}_{\xi}, \lambda,1)$. 

\begin{center}
\renewcommand{\arraystretch}{1.5}
\begin{tabular}{|c|c|c||c|c|c|}\hline
    $\mathbf{e}$ & $\chi(\text{Gr}_{\mathbf{e}}(M))$ &  $\widetilde{\mathbf{x}}^{C_{Q}\mathbf{e}}$&   $\mathbf{e}$ & $\chi(\text{Gr}_{\mathbf{e}}(M))$ &  $\widetilde{\mathbf{x}}^{C_{Q}\mathbf{e}}$ \\\hline\hline
    (0,0,0,0) & 1 & 1& (2,2,0,0) & 1 & $\frac{x_1^2}{x_2^2x_3^2}y_1^2y_2^2$\\\hline
    (1,0,0,0) & 2 & $\frac{x_4}{x_2x_3}y_1$&(2,1,1,0) & 1 & $\frac{x_1^2x_4}{x_2^2x_3^2}y_1^2y_2y_3$\\\hline
    (2,0,0,0) & 1 & $\frac{x_4^2}{x_2x_3}y_1^2$& (2,0,2,0) & 1 & $\frac{x_1^2x_4^2}{x_2^2x_3^2}y_1^2y_3^2$ \\\hline
    (1,1,0,0) & 1 & $\frac{x_1}{x_2x_3}y_1y_2$&(2,2,1,0) & 1 & $\frac{x_1^3}{x_2^2x_3^2}y_1^2y_2^2y_3$\\\hline
    (1,0,1,0) & 1 & $\frac{x_1x_4}{x_2x_3}y_1y_3$& (2,1,2,0) & 1 & $\frac{x_1^3x_4}{x_2^2x_3^2}y_1^2y_2y_3^2$\\\hline
    (2,1,0,0) & 1 & $\frac{x_1x_4}{x_2^2x_3^2}y_1^1y_2$& (2,2,2,0) & 1 & $\frac{x_1^4}{x_2^2x_3^2}y_1^2y_2^2y_3^2$\\\hline
    (2,0,1,0) & 1 & $\frac{x_1x_4^2}{x_2^2x_3^2}y_1^2y_3$&&&\\\hline
\end{tabular}
 \end{center}

\end{document}